\definecolor{dark-red}{rgb}{0.4,0.15,0.15}
\newcommand{\Z}{\mathbb{Z}}
\newcommand{\RR}{\mathcal{R}}
\newcommand{\FF}{\mathbb{F}}
\renewcommand{\setminus}{\smallsetminus}
\newcommand{\lrangle}[1]{\langle #1 \rangle}
\DeclareMathAlphabet{\mathcal}{OMS}{cmsy}{m}{n}
\newcommand{\Sub}{\operatorname{Sub}}
\numberwithin{equation}{section} 
\theoremstyle{plain}
\newaliascnt{theorem}{equation}  
\newtheorem{theorem}[theorem]{Theorem}  
\newaliascnt{dodeca}{equation}  
\theoremstyle{definition}
\newaliascnt{prop}{equation}  
\newtheorem{prop}[prop]{Proposition}
\newaliascnt{lemma}{equation}  
\newtheorem{lemma}[lemma]{Lemma}
\newaliascnt{corollary}{equation}  
\newtheorem{corollary}[corollary]{Corollary}
\newaliascnt{claim}{equation}  
\newaliascnt{conjecture}{equation}  
\newtheorem{conjecture}[conjecture]{Conjecture}
\newaliascnt{question}{equation}  
\newaliascnt{defn}{equation}  
\newtheorem{defn}[defn]{Definition}
\newaliascnt{warn}{equation}  
\newtheorem{warn}[warn]{Warning}
\newaliascnt{example}{equation}  
\newtheorem{example}[example]{Example}
\theoremstyle{remark}
\newaliascnt{remark}{equation}  
\newtheorem{remark}[remark]{Remark}
\newaliascnt{convention}{equation}  
\theoremstyle{plain}
\newtheorem*{mainthm}{Theorem}
\definecolor{gBlue}{HTML}{2b83ba}
\definecolor{gRed}{HTML}{ff5100}
\newcommand{\aref}[1]{\autoref{#1}}
\newcommand{\im}{\operatorname{im}}
\newcommand{\SL}{\mathsf{SL}}
\newcommand{\PSL}{\mathsf{PSL}}
\newcommand{\Dic}{\mathsf{Dic}}
\begin{document}
	\title{Lifting $N_\infty$ operads from conjugacy data}
	
	\author{Scott Balchin}
	\address{Max Planck Institute for Mathematics}
	\email{balchin@mpim-bonn.mpg.de}
	
	\author{Ethan MacBrough}
	\address{Reed College}
	\email{emacbrough@reed.edu}
	
	\author{Kyle Ormsby}
	\address{Reed College / University of Washington}
	\email{ormsbyk@reed.edu \textnormal{/} ormsbyk@uw.edu}
	
	\begin{abstract}
		We isolate a class of groups --- called \emph{lossless groups} --- for which homotopy classes of $G$-$N_\infty$ operads are in bijection with certain restricted transfer systems on the poset of conjugacy classes $\Sub(G)/G$. 
	\end{abstract}

\maketitle
	
	\tableofcontents
	
	\section{Introduction}

	The concept of an $N_\infty$ operad, as introduced by Blumberg--Hill in~\cite{BlumbergHill}, provides an equivariant analogue of $E_\infty$ operads which captures multiplicative norm maps on equivariant commutative ring spectra. Further work by various authors proved that homotopy category of $N_\infty$ operads can be identified with far simpler structures called indexing systems~\cite{BP21,GW18, Rubin2}. This was then distilled into the identification of \emph{transfer systems} which are purely combinatorial representations of $N_\infty$ operads on the subgroup lattice of the group in question~\cite{bbr,rubin}. This has led to computable approaches to understand the structures of $N_\infty$ operads for a given group.
	
	In~\cite{bbr}, the collection of $N_\infty$ operads for the cyclic groups $C_{p^n}$ were classified via the use of transfer systems on the lattice $\Sub(C_{p^n}) \cong [n]$. This concept was abstracted in~\cite{fooqw} where the notion of transfer systems was developed for an arbitrary finite poset, and shown in particular to be in bijection with weak factorization systems when the poset was moreover a complete lattice. This approach has already provided fruitful results such as classifications of model structures on total orders in~\cite{boor}.
	
	This abstraction provides a strict generalization. Indeed, in~\cite{jakovlev}, those lattices $L$ which arise as $\Sub(G)$ for an arbitrary group $G$ are classified (this was refined to the Abelian case in~\cite{contiu}). For example, the lattice in \aref{fig:notasub} does not appear as  $\Sub(G)$ for any $G$.
	\begin{figure}[h]
		\centering
		\begin{tikzpicture}
		\node[circle,draw=black, fill=black, inner sep=0pt, minimum size=5pt] (10) at (1,0) {};
		\node[circle,draw=black, fill=black, inner sep=0pt, minimum size=5pt] (01) at (0,1) {};
		\node[circle,draw=black, fill=black, inner sep=0pt, minimum size=5pt] (21) at (2,1) {};
		\node[circle,draw=black, fill=black, inner sep=0pt, minimum size=5pt] (12) at (1,2) {};
		\node[circle,draw=black, fill=black, inner sep=0pt, minimum size=5pt] (13) at (1,3) {};
		\draw[thin,black!30] (10) -- (01);
		\draw[thin,black!30] (10) -- (21);
		\draw[thin,black!30] (21) -- (12);
		\draw[thin,black!30] (01) -- (12);
		\draw[thin,black!30] (12) -- (13);
		\end{tikzpicture}
		\caption{A lattice which is not of the form $\Sub(G)$ for any $G$.}\label{fig:notasub}
	\end{figure}
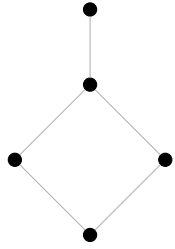
	
	In this paper, we move back to the roots of the classification of $N_\infty$ operads, with a view to improve the computational tools available. In particular, we will make the first serious foray into the realm of non-Abelian groups. In the definition of $N_\infty$ operads, conjugation must be taken into account. One may naively hope that this is superfluous, and that one can just work with transfer systems on the poset $\Sub(G)/G$, that is, the collection of subgroups up to conjugacy. This, however, does not work in general.
	
	Indeed, this is already noted in an example of Rubin~\cite{rubin}, as we now recall. If one considers the symmetric group $\mathfrak S_4$, then there are three conjugate copies of $D_4$ (the dihedral group of order $8$) living inside it. Moreover, there are three double-transpositions in $\mathfrak S_4$ which generate three conjugate copies of $C_2$. It follows that to define a transfer system for $\mathfrak S_4$ it is not enough to just declare that we have the relation $C_2 \,\RR \, D_4$, we must also keep track of which copies of $C_2$ are related to which copies of $D_4$, something that is lost when working up to conjugacy.
	
	There are cases, however, where it is possible to work up to conjugacy; again, such an example is observed by Rubin~\cite{rubin}. Moving down in the world of symmetric groups, consider $G=\mathfrak S_3$.  The subgroup lattice here takes the form displayed in \aref{fig:subg}.
	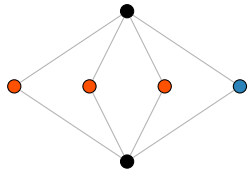
\begin{figure}[h]
		\centering
		\begin{tikzpicture}
		\node[circle,draw=black, fill=black, inner sep=0pt, minimum size=5pt] (10) at (1.5,0) {};
		\node[circle,draw=black, fill=gRed, inner sep=0pt, minimum size=5pt] (01) at (0,1) {};
		\node[circle,draw=black, fill=gRed, inner sep=0pt, minimum size=5pt] (11) at (1,1) {};
		\node[circle,draw=black, fill=gRed, inner sep=0pt, minimum size=5pt] (21) at (2,1) {};
		\node[circle,draw=black, fill=gBlue, inner sep=0pt, minimum size=5pt] (31) at (3,1) {};
		\node[circle,draw=black, fill=black, inner sep=0pt, minimum size=5pt] (02) at (1.5,2) {};
		\draw[thin,black!30] (10) -- (01);
		\draw[thin,black!30] (10) -- (11);
		\draw[thin,black!30] (10) -- (21);
		\draw[thin,black!30] (10) -- (31);
		\draw[thin,black!30] (02) -- (01);
		\draw[thin,black!30] (02) -- (11);
		\draw[thin,black!30] (02) -- (21);
		\draw[thin,black!30] (02) -- (31);
		\end{tikzpicture}
		\caption{The lattice $\Sub(G)$ for $G=\mathfrak S_3$. A red node is a copy of $C_2$, while the blue node is a copy of $C_3$.}\label{fig:subg}
	\end{figure}
	
	The key point is that we can treat all copies of $C_2$ as essentially being the same subgroup. We refer the reader to \aref{defn:gtran} for the definition of a transfer system for what follows. Suppose that we have a transfer system with the relation $1 \,\RR \, \lrangle{(12)}$. As transfer systems are required to be closed under conjugation, this implies that we necessarily have $1 \,\RR \, \lrangle{\tau}$ for every transposition $\tau$. Dually if we have $\lrangle{(12)} \,\RR \, \mathfrak S_3$ then we also have $\lrangle{\tau} \,\RR \, \mathfrak S_3$ for every transposition $\tau$. Next, we use that fact that in a transfer system if we have $H \,\RR \, K$ and $L \,\RR \, K$ then we also have $(H \cap L) \,\RR \, K$. In particular, if we  remember  that we really have three distinct copies of $C_2$, it follows that if we have $\lrangle{(12)} \,\RR \, \mathfrak S_3$, then we also have $\lrangle{(23)} \,\RR \, \mathfrak S_3$, and as such we have $1 \,\RR \, \mathfrak S_3$ by this intersection property.   
	
%
	It turns out that this is the only condition that one needs to impose in this case. So we may do exactly as we want, study transfer systems on $\Sub(G)/G$, which is of the form \aref{fig:subgmodg}.
	\begin{figure}[h]
		\centering
		\begin{tikzpicture}
		\node[circle,draw=black, fill=black, inner sep=0pt, minimum size=5pt] (10) at (1.5,0) {};
		\node[circle,draw=black, fill=gRed, inner sep=0pt, minimum size=5pt] (01) at (1,1) {};
		\node[circle,draw=black, fill=gBlue, inner sep=0pt, minimum size=5pt] (31) at (3,1) {};
		\node[circle,draw=black, fill=black, inner sep=0pt, minimum size=5pt] (02) at (1.5,2) {};
		\draw[thin,black!30] (10) -- (01);
		\draw[thin,black!30] (10) -- (31);
		\draw[thin,black!30] (02) -- (01);
		\draw[thin,black!30] (02) -- (31);
		\end{tikzpicture}
		\caption{The lattice $\Sub(G)/G$ for $G=\mathfrak S_3$. The red node is a copy of $C_2$ up to conjugacy, while a blue node is the copy of $C_3$.}\label{fig:subgmodg}
	\end{figure}
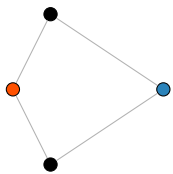
	
	Any transfer system on this lattice (of which there are 10) lifts to a transfer system for $\Sub(G)$ when we additionally satisfy that whenever we have $\begin{tikzpicture}	\node[circle,draw=black, fill=gRed, inner sep=0pt, minimum size=5pt] (01) at (0,0) {};\end{tikzpicture} \,\RR \, \mathfrak S_3$ we also have $1 \,\RR \, \mathfrak S_3$. It turns out that there are 9 such transfer systems.
	
	The goal of this paper is to isolate a class of groups for which $G$-transfer systems can be explicitly characterized as certain restricted transfer systems on the poset $\Sub(G)/G$ in this fashion. The class of groups that we will isolate here are the \emph{lossless groups} (see \aref{defn:lossless}). Not only does this provide a non-trivial structural result, it also equips us with powerful computational tools to classify $N_\infty$ operads for a wide range of non-Abelian groups, and provides a better conceptual understanding of the structures involved.

	For example, let us assume that we wish to study the collection of $G$-$N_\infty$ operads for $G=D_9$. The lattice that we need to consider, $\Sub(D_9)$, is displayed in \aref{fig:d9}.
	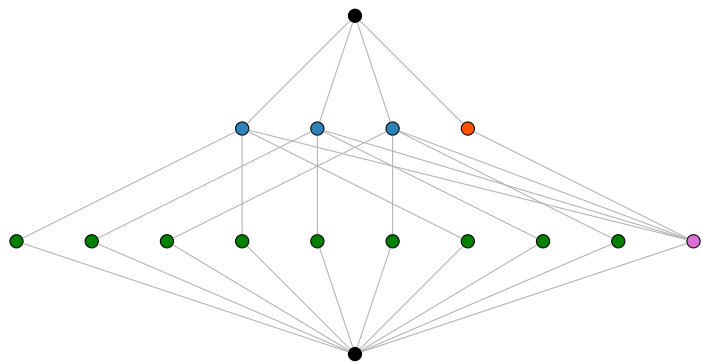
\begin{figure}[h]	
		\centering
		\begin{tikzpicture}[yscale=1.5,xscale=-1]
		\node[circle,draw=black, fill=black, inner sep=0pt, minimum size=5pt] (00) at (4.5,0) {};
		\node[circle,draw=black, fill=Orchid, inner sep=0pt, minimum size=5pt] (01) at (0,1) {};
		\node[circle,draw=black, fill=Green, inner sep=0pt, minimum size=5pt] (11) at (1,1) {};
		\node[circle,draw=black, fill=Green, inner sep=0pt, minimum size=5pt] (21) at (2,1) {};
		\node[circle,draw=black, fill=Green, inner sep=0pt, minimum size=5pt] (31) at (3,1) {};
		\node[circle,draw=black, fill=Green, inner sep=0pt, minimum size=5pt] (41) at (4,1) {};
		\node[circle,draw=black, fill=Green, inner sep=0pt, minimum size=5pt] (51) at (5,1) {};
		\node[circle,draw=black, fill=Green, inner sep=0pt, minimum size=5pt] (61) at (6,1) {};
		\node[circle,draw=black, fill=Green, inner sep=0pt, minimum size=5pt] (71) at (7,1) {};
		\node[circle,draw=black, fill=Green, inner sep=0pt, minimum size=5pt] (81) at (8,1) {};
		\node[circle,draw=black, fill=Green, inner sep=0pt, minimum size=5pt] (91) at (9,1) {};
		\node[circle,draw=black, fill=gRed, inner sep=0pt, minimum size=5pt] (32) at (3,2) {};
		\node[circle,draw=black, fill=gBlue, inner sep=0pt, minimum size=5pt] (42) at (4,2) {};
		\node[circle,draw=black, fill=gBlue, inner sep=0pt, minimum size=5pt] (52) at (5,2) {};
		\node[circle,draw=black, fill=gBlue, inner sep=0pt, minimum size=5pt] (62) at (6,2) {};
		\node[circle,draw=black, fill=black, inner sep=0pt, minimum size=5pt] (03) at (4.5,3) {};
		\foreach \xtick in {0,...,9} \draw[thin,black!30] (00) -- (\xtick1);
		\draw[thin,black!30] (91) -- (62);
		\draw[thin,black!30] (81) -- (52);
		\draw[thin,black!30] (71) -- (42);
		\draw[thin,black!30] (61) -- (62);
		\draw[thin,black!30] (51) -- (52);
		\draw[thin,black!30] (41) -- (42);
		\draw[thin,black!30] (31) -- (62);
		\draw[thin,black!30] (21) -- (52);
		\draw[thin,black!30] (11) -- (42);
		\draw[thin,black!30] (01) -- (62);
		\draw[thin,black!30] (01) -- (52);
		\draw[thin,black!30] (01) -- (42);
		\draw[thin,black!30] (01) -- (32);
		\draw[thin,black!30] (62) -- (03);
		\draw[thin,black!30] (52) -- (03);
		\draw[thin,black!30] (42) -- (03);
		\draw[thin,black!30] (32) -- (03);
		\end{tikzpicture}
		\caption{The lattice $\Sub(G)$ for $G=D_{9}$. The colored nodes in the rows indicate different conjugacy classes of subgroups.}\label{fig:d9}
	\end{figure}

Hoping to find patterns or structure on this lattice is a daunting task. However, the results presented here will allow us to instead explore structure in the much more manageable --- and human friendly --- $\Sub(G)/G$ as displayed in \aref{fig:d9mod}.  In~\cite{bmodihedral} we shall undertake this exploration, and provide a recursive algorithm for computing $N_\infty$ operads for the dihedral groups $D_{p^n}$ for all $n \geqslant 0$ ($p \neq 2$).

	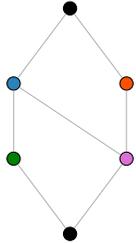
\begin{figure}[h]
		\centering
		\begin{tikzpicture}[xscale=1.5]
		\node[circle,draw=black, fill=black, inner sep=0pt, minimum size=5pt] (00) at (0.5,0) {};
		\node[circle,draw=black, fill=Green, inner sep=0pt, minimum size=5pt] (01) at (0,1) {};
		\node[circle,draw=black, fill=Orchid, inner sep=0pt, minimum size=5pt] (11) at (1,1) {};
		\node[circle,draw=black, fill=gBlue, inner sep=0pt, minimum size=5pt] (02) at (0,2) {};
		\node[circle,draw=black, fill=gRed, inner sep=0pt, minimum size=5pt] (12) at (1,2) {};
		\node[circle,draw=black, fill=black, inner sep=0pt, minimum size=5pt] (03) at (0.5,3) {};
		\draw[thin,black!30] (00) -- (01);
		\draw[thin,black!30] (00) -- (11);
		\draw[thin,black!30] (01) -- (02);
		\draw[thin,black!30] (11) -- (02);
		\draw[thin,black!30] (11) -- (12);
		\draw[thin,black!30] (02) -- (03);
		\draw[thin,black!30] (12) -- (03);
		\end{tikzpicture}
		\caption{The lattice $\Sub(G)/G$ for $G=D_9$. The coloring of the nodes corresponds to the coloring in \autoref{fig:d9}.}\label{fig:d9mod}
	\end{figure}

	In \autoref{sec:transfergroups} we will introduce the main object of study of this paper, the \emph{lossless groups}, and prove that there is a bijection between $G$-transfer systems for a lossless group $G$ and \emph{liftable transfer systems} on $\Sub(G)/G$. We record the main theorem here.
	
	\begin{mainthm}[\aref{cor:losslesssummary}]
		Let $G$ be a lossless group. Then there is a bijection between homotopy classes of $G$-$N_\infty$ operads and liftable transfer systems on $\Sub(G)/G$.
	\end{mainthm}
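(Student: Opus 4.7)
The strategy is to reduce the statement to a purely combinatorial bijection between $G$-transfer systems on $\Sub(G)$ and liftable transfer systems on $\Sub(G)/G$; combining such a bijection with the established correspondence between homotopy classes of $G$-$N_\infty$ operads and $G$-transfer systems on $\Sub(G)$ from \cite{BP21,GW18,Rubin2,bbr,rubin} then yields the corollary immediately.

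For the combinatorial bijection, I would construct mutually inverse maps $q$ and $\ell$. The quotient map $q$ sends a $G$-transfer system $\RR$ on $\Sub(G)$ to the relation $q(\RR)$ on $\Sub(G)/G$ defined by $[H] \, q(\RR) \, [K]$ iff there exist representatives $H' \in [H]$ and $K' \in [K]$ with $H' \,\RR\, K'$. Conjugation-closure of $\RR$ ensures $q(\RR)$ is well-defined, and a direct check verifies that it is a transfer system on $\Sub(G)/G$ whose image lies in the liftable transfer systems --- the latter being, by the design of \aref{defn:lossless} and the surrounding discussion, precisely the extra constraints that the pullback axiom of $\RR$ together with conjugation-closure impose on $q(\RR)$ (as in the $\mathfrak{S}_3$ example, where three conjugate copies of $C_2$ each relating to $\mathfrak{S}_3$ force the trivial subgroup to relate as well via intersection).

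In the other direction, the lifting map is $H \, \ell(\overline{\RR}) \, K$ iff $H \leq K$ and $[H] \,\overline{\RR}\, [K]$; this relation is plainly reflexive, conjugation-closed, and transitive. The central obstacle, and technical heart of the argument, is verifying pullback-closure: given $H \leq K$ with $[H] \,\overline{\RR}\, [K]$ and an arbitrary $L \leq K$, one must show $[H \cap L] \,\overline{\RR}\, [L]$. The intersection class $[H \cap L]$ depends on the chosen conjugate representatives and not only on $[H]$ and $[L]$, so the verification requires control over the full $G$-orbit of intersection classes arising from conjugate pairs lying over $([H],[L])$. Losslessness of $G$ is precisely the structural hypothesis ensuring that this orbit is detectable combinatorially in $\Sub(G)/G$, and the liftability conditions on $\overline{\RR}$ supply the requisite relations. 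Once pullback-closure is established, the identities $q \circ \ell = \id$ and $\ell \circ q = \id$ follow formally: $\ell(\overline{\RR})$ records exactly the pairs $(H,K)$ with $H \leq K$ witnessing $[H] \,\overline{\RR}\, [K]$, so its image under $q$ is $\overline{\RR}$; and any $H \,\RR\, K$ already forces $H \leq K$, giving $\ell(q(\RR)) = \RR$.
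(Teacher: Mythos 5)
Your overall architecture matches the paper's: push a $G$-transfer system forward to $\Sub(G)/G$ by taking conjugacy classes (the paper's $\pi_*$), lift a transfer system on $\Sub(G)/G$ by taking the full preimage (the paper's $\pi^{-1}$), show these are mutually inverse on the liftable transfer systems, and quote the operad--transfer-system correspondence from the literature. But there are genuine gaps. First, the step you yourself isolate as the ``technical heart'' --- restriction-closure of $\ell(\overline{\RR})$ --- is exactly where the real work lies, and you do not carry it out: you assert that losslessness makes the orbit of intersection classes ``detectable'' and that liftability ``supplies the requisite relations,'' but the paper's argument (\aref{lem:lifting}) is a delicate induction on triples $(K,H,L)$ that invokes losslessness at two separate points (to move a maximal lower bound into $N_G(L)$, and to realize $[L]\leqslant[K]$ by a conjugation in $N_G(H)$) and uses the intersection condition only in the case where the maximal lower bound equals $L$. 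Moreover, to even state what must be verified you need a precise definition of ``liftable'': the paper first defines it abstractly as $\RR = \pi_*(\pi^*(\RR))$ via a Galois connection (\aref{prop:lossless}) and only later characterizes it by the concrete condition $[K\cap K']\,\RR\,[H]$; your proposal pins down neither, and also glosses over the fact that $q(\RR)$ being a categorical transfer system on the (generally non-lattice) poset $\Sub(G)/G$ itself requires losslessness (\aref{lem:pushforward}).

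Second, your claim that $\ell(q(\RR)) = \RR$ ``follows formally'' is false. The inclusion $\RR \subseteq \ell(q(\RR))$ is formal, but the reverse inclusion says: if $K'\,\RR\,H'$ for some representatives and $K \leqslant H$ with $[K]=[K']$, $[H]=[H']$, then $K\,\RR\,H$. This is precisely what fails for lossy groups --- for $\mathfrak S_4$ the transfer system generated by a single $C_2 \to D_4$ is strictly smaller than the preimage of its pushforward --- and proving it for lossless groups is the content of the second paragraph of the proof of \aref{prop:lossless}: conjugate $H'$ onto $H$, observe that $K$ and its conjugate both lie in $H$, use losslessness to replace the conjugating element by one in $N_G(H)$, and apply conjugation-closure. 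The argument is short, but it is the one place where losslessness is indispensable for injectivity of the quotient map, so labelling it formal misses the point of the hypothesis.
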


	In fact, in a sense made precise in \autoref{sec:transfergroups}, the above theorem characterizes lossless groups.
	
	We then continue in \autoref{sec:families} and prove that many groups of interest are in fact lossless. In \autoref{sec:metacyclic} we direct our attention to a particularly nice class of lossless groups, namely the \emph{metacyclic Frobenius groups}. For these groups we have a detailed understanding of both the form of $\Sub(G)/G$ and the lifting conditions required to determine a $G$-transfer system.  We apply this theory in \autoref{sec:examples} to demonstrate how the theory aids computations. Finally, in \autoref{sec:lossy} we outline a potential strategy for dealing with lossy groups. In particular we shall focus on the case of $G = SL_2(\mathbb{F}_p)$ which is an important family of groups in the study of topological modular forms with level structures~\cite{tmf}.
	
	\subsection*{Conventions}
	
	Throughout, we shall use the following conventions for group theory:
	
	\begin{itemize}
		\item $H\leqslant G$ designates $H$ as a subgroup of $G$.
		\item $N\trianglelefteqslant G$ designates $N$ as a normal subgroup.
		\item If $g\in G$ then ${}^gH := gHg^{-1}$ (thus ${}^g({}^hH) = {}^{(gh)}H$ for all $g,h\in G$).
		\item For $G$ a group we write $\Sub(G)/G$ for the poset of conjugacy classes of subgroups of $G$.
		\item For $n > 2$, we write $D_n$ for the dihedral group of order $2n$.
	\end{itemize}
	
	\subsection*{Acknowledgements}

	The first author would like to thank the Max Planck Institute for Mathematics for its hospitality, and was partially supported by the European Research Council (ERC) under Horizon Europe (grant No.~101042990). The second author thanks Coil Technologies for their generous donation to fund his tuition, which enabled him to conduct this research. The third author's work was supported by the National Science Foundation under Grant No.~DMS-2204365. The authors thank the anonymous referee for helpful comments and suggestions.

	
	\section{Transfer systems on lossless groups}\label{sec:transfergroups}
	
	In this section we will introduce the class of \emph{lossless groups}, which allow us to study $G$-transfer systems using only categorical transfer systems on $\Sub(G)/G$. After proving the basic structural results about these groups, we identify several simple criteria for when a group is lossless, and also provide several examples of how groups can fail to be lossless.
	
	\subsection{General results}
	
	We first recall the definitions of $G$-transfer systems and categorical transfer systems. We refer the reader to~\cite{bbr, fooqw} for further details.
	
	\begin{defn}\label{defn:gtran}
		Let $G$ be a finite group. A ($G$-)\emph{transfer system} is a relation $\RR$ on $\Sub(G)$ refining inclusion satisfying the following:
		\begin{itemize}
			\item (reflexivity) $H \,\RR\, H$ for all $H \leqslant G$;
			\item (transitivity) $K \, \RR \, H$ and $L\,  \RR \, K$ implies $L\, \RR \,H$;
			\item (closed under conjugation) $K\, \RR \,H$ implies that ${}^gK\, \RR \,{}^gH$ for all $g \in G$;
			\item (closed under restriction) $K\, \RR \,H$ and $L \leqslant H$ implies $(K {\cap} L) \, \RR \, L$.
		\end{itemize}
	\end{defn}
	
	\begin{prop}[\cite{bbr}]
	Let $G$ be a finite group. Then there is a bijection between the set of transfer systems on $\Sub(G)$ and the set of $G$-$N_\infty$ operads.
	\end{prop}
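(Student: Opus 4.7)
The plan is to factor this bijection through the intermediate combinatorial structure of an \emph{indexing system}, following \cite{BlumbergHill, BP21, GW18, Rubin2, rubin}. The statement is implicitly about homotopy classes of $G$-$N_\infty$ operads, as the set of operads on the nose is a proper class.

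First, to each $G$-$N_\infty$ operad $\mathcal{O}$ one associates its collection of \emph{admissible} finite $G$-sets: a $G$-set $T$ of size $n$ is admissible when the fixed-point space $\mathcal{O}(n)^{\Gamma_T}$ is nonempty (hence contractible), where $\Gamma_T \leqslant G \times \Sigma_n$ is the graph subgroup of the permutation representation associated to $T$. The defining axioms of an $N_\infty$ operad translate into the statement that this collection contains all trivial $G$-sets and is closed under subobjects, Cartesian products, and self-induction --- i.e.\ it forms an indexing system in the sense of Blumberg--Hill. Conversely, I would invoke the realization theorem of \cite{BP21, GW18, Rubin2}: every indexing system arises from some $N_\infty$ operad, and two such operads are weakly equivalent in the Blumberg--Hill model structure if and only if their admissible collections coincide. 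This gives a bijection between homotopy classes of $G$-$N_\infty$ operads and indexing systems on $G$.

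It then remains to identify indexing systems with transfer systems. Given an indexing system $\mathcal{I}$, define a relation $\RR$ on $\Sub(G)$ by $K\,\RR\,H$ iff $K\leqslant H$ and the $H$-set $H/K$ lies in $\mathcal{I}$. Conversely, from a transfer system one reconstructs the admissible $G$-sets as finite disjoint unions of coset spaces $H/K$ with $K\,\RR\,H$. A direct audit matches the four clauses of \aref{defn:gtran} with the four closure conditions of an indexing system: reflexivity corresponds to admissibility of trivial $H$-sets; closure under conjugation to isomorphism-invariance of admissibility; transitivity to closure under self-induction; and the restriction clause ($K\,\RR\,H$ and $L\leqslant H$ implies $K\cap L\,\RR\, L$) to closure under pullbacks of admissible maps of orbits along subgroup inclusions, unwound via the double-coset decomposition $L\backslash H/K$.

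The main technical obstacle is the realization half of the first bijection --- exhibiting, for each prescribed indexing system, a genuine $N_\infty$ operad whose graph-fixed-point structure realizes it. This is the content of \cite{BP21, GW18, Rubin2} and requires nontrivial model-categorical input on the category of equivariant operads (or an explicit simplicial construction); in particular one must show that free $N_\infty$ operads on generating admissible-set data have the correct equivariant homotopy type. By contrast, the passage from indexing systems to transfer systems is a purely combinatorial dictionary, and the restriction axiom is precisely what encodes the pullback-closure of admissible $G$-set morphisms --- the subtlest of the indexing-system axioms to translate, and the one that will govern the lossless/lossy phenomena studied in the remainder of the paper.
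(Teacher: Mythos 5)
Your proposal is correct and follows exactly the route the paper relies on: it gives no proof of its own, citing \cite{bbr} (and the surrounding literature \cite{BlumbergHill,BP21,GW18,Rubin2}), where the bijection is established precisely by passing from homotopy classes of $N_\infty$ operads through indexing systems to transfer systems, with the combinatorial dictionary you describe. Your observation that the statement should be read at the level of homotopy classes is also consistent with how the paper uses it (cf.\ the main theorem and \aref{cor:losslesssummary}).
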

	
	In \cite{fooqw}, a notion of an abstract categorical transfer system was introduced for an arbitrary poset, but with a particular focus on when the poset in question is a lattice. In fact, despite presenting the definition for general posets, the authors only make serious use of their definition in the setting of lattices. Since $\Sub(G)/G$ may not be a lattice when $G$ is non-commutative, we are forced to think seriously about more general posets. The definition we present below is \emph{not} equivalent to the definition given in \cite{fooqw}, since we require restriction closure for arbitrary maximal lower bounds, rather than just when a unique meet exists. However, it is straightforward to verify the two definitions coincide when the poset is a lattice. For an element $x$ in some poset $\mathcal{P}$, let $x^{\downarrow}$ denote the down-set of $x$ in $\mathcal{P}$, i.e. the set of all $y\leqslant x$.

\begin{defn}\label{defn:cattran}
Let $\mathcal{P} = (\mathcal{P}, \leqslant)$ be a poset. A (categorical) \emph{transfer system} on $\mathcal{P}$ consists of a partial order $\RR$ on $\mathcal{P}$ that refines $\leqslant$ and such that whenever $x \, \RR \, y$ and $z\leqslant y$, then for all maximal $w\in x^{\downarrow}\cap z^{\downarrow}$ we have $w \, \RR \, z$.
\end{defn}

\begin{remark}
Using this definition, Theorem 4.13 in \cite{fooqw} (categorical transfer systems on a lattice are in natural bijection with weak factorization systems) can be generalized to arbitrary posets. This gives some evidence that our definition is the ``morally correct'' one for non-lattice posets. More pragmatically, the stronger definition is necessary to make \autoref{lem:lifting} work.
\end{remark}
	
	\begin{warn}\label{warn:achtung}
		Until this point in the literature, only transfer systems on Abelian groups have been seriously considered. In this case, $G$-transfer systems are in bijection with categorical transfer systems on $\Sub(G) \cong \Sub(G)/G$, and as such there is no distinction to be made. We are primarily concerned with non-Abelian groups in this paper, and as such, one needs to be careful what they mean.
	\end{warn}
	
	We now introduce the notion of a lossless group. This definition was isolated to capture exactly the groups needed for our applications, and we have been unable to find this class of groups studied previously in the literature.
	
	\begin{defn}\label{defn:lossless}
		A \emph{lossless group} is a group $G$ such that for all pairs of subgroups $K\leqslant H$ such that ${}^gK\leqslant H$ for some $g\in G$, there exists some $h\in N_G(H)$ such that ${}^hK = {}^gK$. A group which is not lossless will be called \emph{lossy}.
	\end{defn}
	
	\begin{remark}
		An equivalent succinct way of phrasing this definition is that for all $H\leqslant G$, the fusion of subgroups of $H$ is controlled by $N_G(H)$. Note that we do not require fusion to be controlled on elements as is common in group theoretic literature. This corresponds in the previous definition to the fact that we only require $h^{-1}g\in N_G(K)$, rather than requiring $h^{-1}g\in C_G(K)$.
	\end{remark}

	In the following definition, $[H]$ denotes the conjugacy class of a subgroup $H\leqslant G$. We also adopt the notational convention of writing $K\to H\in \RR$ as shorthand for $K\,\RR\,H$; we find this notational flexibility useful, especially as the notation for a given transfer system becomes more complex.
	
	\begin{defn}
		Let $G$ be a group and $\pi\colon\Sub(G)\to\Sub(G)/G$ be the quotient map of posets.
		\begin{itemize}
			\item For a categorical transfer system $ \RR $ on $\Sub(G)/G$, we define $\pi^{-1}( \RR )$ to be the relation on $\Sub(G)$ such that $K\to H\in\pi^{-1}( \RR )$ if and only if $[K]\to[H]\in \RR $. We then define $\pi^*(\RR)$ to be the $G$-transfer system generated by $\pi^{-1}(\RR)$.
			\item For a $G$-transfer system $ \RR $, we define $\pi_*( \RR )$ to be the relation on $\Sub(G)/G$ where $[K]\to[H]\in\pi_*(\RR)$ if and only if there exist some pair of subgroups $K'\leqslant H'$ with $[K] = [K']$ and $[H] = [H']$, such that $K'\to H'\in\RR$.
		\end{itemize}
	\end{defn}

	We start with an observation that we can push any $G$-transfer system to a categorical transfer system on $\Sub(G)/G$ provided that $G$ is lossless.
	
	\begin{lemma}\label{lem:pushforward}
		Let $G$ be a lossless group. Then for all $G$-transfer systems $\RR$, $\pi_*( \RR )$ is a categorical transfer system on $\Sub(G)/G$.
	\end{lemma}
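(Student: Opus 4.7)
The plan is to verify each of the three defining conditions of a categorical transfer system from \aref{defn:cattran} --- refinement of $\leqslant$, being a partial order, and the restriction closure axiom --- for $\pi_*(\RR)$. Refinement is immediate from the definition of $\pi_*$. Reflexivity follows from $H \to H \in \RR$, which gives $[H] \to [H] \in \pi_*(\RR)$. For transitivity, given $[K] \to [H]$ and $[H] \to [L]$ in $\pi_*(\RR)$ with witnesses $K_1 \leqslant H_1$ and $H_2 \leqslant L_2$ in $\RR$, choose $g \in G$ with $H_2 = {}^g H_1$; conjugation closure of $\RR$ yields ${}^g K_1 \to H_2 \in \RR$, and transitivity of $\RR$ then gives ${}^g K_1 \to L_2 \in \RR$, so $[K] \to [L] \in \pi_*(\RR)$. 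For antisymmetry, any witness $K' \leqslant H'$ forces $|K| \leqslant |H|$, so comparability in both directions collapses $K_1 \leqslant H_1$ to $K_1 = H_1$, hence $[K] = [H]$.

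The substantive content is the restriction closure axiom. Fix $[K] \to [H] \in \pi_*(\RR)$ with witness $K' \leqslant H'$ satisfying $K' \to H' \in \RR$, and suppose $[L] \leqslant [H]$. By conjugating an arbitrary comparability witness back into $H'$, we may choose $L' \leqslant H'$ with $[L'] = [L]$. Let $[W]$ be an arbitrary maximal element of $[K]^{\downarrow} \cap [L]^{\downarrow}$. Since $[W] \leqslant [K']$ and $[W] \leqslant [L']$, we may choose representatives $W^K \leqslant K'$ and $W^L \leqslant L'$ with $[W^K] = [W^L] = [W]$, and pick $g \in G$ with ${}^g W^K = W^L$.

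This is where losslessness enters: since $W^K$ and its conjugate ${}^g W^K = W^L$ both lie inside $H'$, \aref{defn:lossless} produces $h \in N_G(H')$ with ${}^h W^K = W^L$. Set $L'' := {}^{h^{-1}} L'$; since $h \in N_G(H')$, we have $L'' \leqslant H'$ with $[L''] = [L]$, and moreover $W^K = {}^{h^{-1}} W^L \leqslant L''$. Applying restriction closure of $\RR$ to $K' \to H'$ against $L'' \leqslant H'$ yields $(K' \cap L'') \to L'' \in \RR$, so $[K' \cap L''] \to [L] \in \pi_*(\RR)$.

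To finish, $K' \cap L'' \leqslant K'$ and $K' \cap L'' \leqslant L''$ show $[K' \cap L''] \in [K]^{\downarrow} \cap [L]^{\downarrow}$, while $W^K \leqslant K' \cap L''$ gives $[W] \leqslant [K' \cap L'']$; maximality of $[W]$ forces equality, so $[W] \to [L] \in \pi_*(\RR)$, as required. The main obstacle is exactly the step using losslessness: a priori $W^K$ and $W^L$ are only conjugate by some element of $G$ that need not normalize $H'$, so we cannot directly arrange a single copy of $W$ to sit inside both $K'$ and $L'$. Losslessness is precisely the hypothesis that lets us re-align $L'$ to a conjugate $L''$ still inside $H'$ so that restriction closure of $\RR$ can be applied within the ambient group $H'$ and the resulting intersection realizes the class $[W]$.
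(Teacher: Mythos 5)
Your proof is correct and follows essentially the same route as the paper's: both arguments use losslessness to replace the conjugator aligning the two copies of the maximal lower bound by an element of $N_G(H')$, then apply restriction closure of $\RR$ inside $H'$ and identify the resulting intersection with the maximal class by maximality and an order count. The only (cosmetic) difference is that you conjugate the representative of $[L]$ while keeping $K'$ fixed, whereas the paper conjugates $K$ and $M$ while keeping $L$ fixed.
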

	\begin{proof}
		Let $\RR' = \pi_*(\RR)$. Suppose $[K]\, \RR ' \, [H]\, \RR' \,[L]$ and let there be lifts (i.e., representatives in the conjugacy class) $K'\, \RR \, H'$ and $H''\, \RR \, L'$. Then $[H'] = [H] = [H'']$ implies we can find $g\in G$ for which ${}^gH' = H''$. Then ${}^gK'\, \RR \, L'$ is a lift of $[K]\to[L]$. Thus $ \RR '$ is transitive. Note that this part does not use the assumption that $G$ is lossless.
		
		Now suppose $[K]\, \RR' \,[H]$ and $[L]\leqslant[H]$, and suppose $[M]$ is maximal among $[M]\leqslant[K]$ and $[M]\leqslant[L]$. We can assume without loss of generality that $K\, \RR \, H$ is a lift and $L\leqslant H$ and $M\leqslant K$. Let $g\in G$ such that ${}^gM\leqslant L$. Since $G$ is lossless we can assume $g\in N_G(H)$. Thus ${}^gK\, \RR \, H$ and ${}^gM\leqslant {}^gK\cap L$. But $[{}^gK\cap L]\leqslant [K],[L]$, so by maximality ${}^gM$ must be conjugate to ${}^gK\cap L$, and hence for order reasons we have ${}^gM = {}^gK\cap L$. But since $\, \RR \,$ is a transfer system we have ${}^gM \, \RR \, L$ as ${}^gM = ({}^gK\cap L)$, and hence $[M]\, \RR '\,[L]$.
	\end{proof}
	
	\begin{prop}\label{prop:lossless}
		If $G$ is lossless then $\pi^*\dashv\pi_*$ is a Galois connection between $G$-transfer systems and categorical transfer systems on $\Sub(G)/G$. Furthermore, the unit of this adjunction is the identity, i.e., for all $G$-transfer systems $\RR$ we have $\RR = \pi^{-1}(\pi_*(\RR)) = \pi^*(\pi_*(\RR))$.
	\end{prop}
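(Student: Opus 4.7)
The plan is to verify the Galois connection by directly checking the defining adjunction equivalence $\pi^*(\mathcal{S}) \subseteq \RR \iff \mathcal{S} \subseteq \pi_*(\RR)$ for any categorical transfer system $\mathcal{S}$ on $\Sub(G)/G$ and any $G$-transfer system $\RR$. The forward implication is essentially formal: given $[K] \to [H] \in \mathcal{S}$, the fact that $\mathcal{S}$ refines $\leqslant$ on $\Sub(G)/G$ lets us choose representatives $K' \leqslant H'$ with $[K'] = [K]$ and $[H'] = [H]$; then $K' \to H' \in \pi^{-1}(\mathcal{S}) \subseteq \pi^*(\mathcal{S}) \subseteq \RR$, whence $[K] \to [H] \in \pi_*(\RR)$ by definition. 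This direction does not need losslessness.

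The reverse implication is where the losslessness hypothesis does all the work. Assume $\mathcal{S} \subseteq \pi_*(\RR)$. Because $\RR$ is already a $G$-transfer system and $\pi^*(\mathcal{S})$ is by definition the $G$-transfer system generated by $\pi^{-1}(\mathcal{S})$, it suffices to show that every generating pair $K \leqslant H$ with $[K] \to [H] \in \mathcal{S}$ already lies in $\RR$. Such a pair has $[K] \to [H] \in \pi_*(\RR)$, so there is a witness $K' \leqslant H'$ with $[K'] = [K]$, $[H'] = [H]$, and $K' \to H' \in \RR$. Choose $g \in G$ with ${}^gH' = H$; closure under conjugation gives ${}^gK' \to H \in \RR$, and ${}^gK'$ is a subgroup of $H$ that is $G$-conjugate to $K \leqslant H$. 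Now \autoref{defn:lossless} applied to $H$ produces $h \in N_G(H)$ with ${}^h({}^gK') = K$, and conjugating the relation ${}^gK' \to H$ by $h$ yields $K \to H \in \RR$, as required.

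For the statement about the unit, the containment $\RR \subseteq \pi^{-1}(\pi_*(\RR))$ is immediate from the definitions, since $K \to H \in \RR$ tautologically supplies a lift witnessing $[K] \to [H] \in \pi_*(\RR)$. The opposite containment $\pi^{-1}(\pi_*(\RR)) \subseteq \RR$ is the same normalizer argument from the previous paragraph, applied verbatim to an arbitrary $K \leqslant H$ with $[K] \to [H] \in \pi_*(\RR)$. Combining these gives $\RR = \pi^{-1}(\pi_*(\RR))$, and since $\pi^{-1}(\pi_*(\RR)) \subseteq \pi^*(\pi_*(\RR)) \subseteq \RR$, where the second containment is the counit inequality of the Galois connection just established, we conclude $\RR = \pi^*(\pi_*(\RR))$ as well.

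The only genuine obstacle is the losslessness step, but its role is essentially forced: the two sides of the adjunction differ precisely in that $\pi_*$ only remembers a $\RR$-related pair up to conjugacy, so to recover a specific pair $K \leqslant H$ from knowledge of a conjugate pair ${}^gK' \leqslant H$ one needs a conjugation that fixes $H$ setwise, which is exactly what \autoref{defn:lossless} provides. Everything else is manipulation of definitions, and the two unit equalities follow by squeezing between the Galois-connection counit and the tautological containments.
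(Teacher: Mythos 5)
Your proof is correct and follows essentially the same route as the paper: both arguments reduce the adjunction to the key fact that, for a lossless $G$, membership of $K \to H$ in a $G$-transfer system depends only on the conjugacy classes $[K]$ and $[H]$, proved by first conjugating the ambient groups to agree and then invoking losslessness to move the conjugating element into $N_G(H)$. The derivation of the unit identities from this is also the same, so there is nothing to flag.
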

	\begin{proof}
		Let $\RR$ be a $G$-transfer system and $\RR'$ a categorical transfer system on $\Sub(G)/G$. By definition, $\RR\geqslant \pi^*(\RR')$ if and only if $\RR\geqslant \pi^{-1}(\RR')$ if and only if $[K]\,\RR'\,[H]$ implies $K\,\RR\,H$. On the other hand, we have $\pi_*(\RR)\geqslant \RR'$ if and only if $[K]\,\RR'\,[H]$ implies there exists some $K'\leqslant H'$ with $[K'] = [K]$ and $[H'] = [H]$ such that $K'\,\RR\,H'$.
		
		Thus we need to show $K'\,\RR\,H'$ implies $K\,\RR\,H$. Or in other words, we need to show that if $G$ is lossless, then for any $G$-transfer system $\RR$ and any two pairs $K\leqslant H$, $K'\leqslant H'$ such that $[K] = [K']$ and $[H] = [H']$, we have $K\,\RR\,H$ if and only if $K'\,\RR\,H'$. By symmetry we can suppose $K\,\RR\,H$ and we want to show this implies $K'\,\RR\,H'$. But $[H] = [H']$ implies by definition we can find some $g\in G$ such that ${}^gH = H'$, and by conjugation closure we have ${}^gK\,\RR\,{}^gH$, so we might as well assume $H = H'$. Let $h\in G$ such that ${}^hK = K'$. Then we have $K,{}^hK\leqslant H$, so since $G$ is lossless we can assume $h\in N_G(H)$. But then again using conjugation closure we have $(K' = {}^hK) \,\RR\, ({}^hH = H = H')$ as claimed.
		
		From the definition of $\pi^{-1}(-)$, this also shows $\RR = \pi^{-1}(\pi_*(\RR))$ for any $G$-transfer system $\RR$. Since $\pi^*(\RR')$ is defined to be the smallest $G$-transfer system containing $\pi^{-1}(\RR')$ and $\RR$ is a $G$-transfer system by definition, this also implies $\RR = \pi^*(\pi_*(\RR))$ and hence the unit is the identity.
	\end{proof}
	
	\begin{corollary}
		If $G$ is lossless then every $G$-transfer system can be lifted from a categorical transfer system on $\Sub(G)/G$.
	\end{corollary}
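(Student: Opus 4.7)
The plan is to observe that this corollary is essentially an immediate repackaging of \autoref{prop:lossless}. Given any $G$-transfer system $\RR$, I would set $\RR' := \pi_*(\RR)$. By \autoref{lem:pushforward}, $\RR'$ is a categorical transfer system on $\Sub(G)/G$, so it makes sense to consider its lift $\pi^*(\RR')$. Then the ``unit is the identity'' clause of \autoref{prop:lossless} gives $\RR = \pi^*(\pi_*(\RR)) = \pi^*(\RR')$, which exhibits $\RR$ as the lift of a categorical transfer system on $\Sub(G)/G$, as required.

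There is no real obstacle to overcome here, since all of the substantive content, namely the fact that $\pi_*(\RR)$ is actually a categorical transfer system (which uses losslessness in verifying the restriction axiom), and the fact that $\pi^*(\pi_*(\RR)) = \RR$ (which uses losslessness to guarantee that within a single conjugacy pair $(K,H)$ of subgroup inclusions, the relation $K\,\RR\,H$ is independent of the chosen representatives), has already been established. The only minor thing to articulate is the clean statement: the map sending a categorical transfer system $\RR'$ on $\Sub(G)/G$ to $\pi^*(\RR')$ hits every $G$-transfer system when $G$ is lossless, which is exactly what it means for every $G$-transfer system to be liftable. So the proof reduces to a one-line invocation of the preceding proposition.
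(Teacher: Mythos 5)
Your proposal is correct and follows exactly the route the paper intends: the corollary is stated without proof precisely because it is the immediate consequence of \autoref{lem:pushforward} (giving that $\pi_*(\RR)$ is a categorical transfer system) and the ``unit is the identity'' clause of \autoref{prop:lossless} (giving $\RR = \pi^*(\pi_*(\RR))$). Nothing is missing.
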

	
	We note that a strong converse to this corollary also holds:
	
	\begin{prop}
		Let $G$ be an arbitrary (finite) group. If every $G$-transfer system can be written as $\pi^*(\RR)$ for some arbitrary \emph{relation} $\RR$ on $\Sub(G)/G$, then $G$ is lossless.
	\end{prop}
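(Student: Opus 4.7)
The plan is to prove the contrapositive: assuming $G$ is lossy, exhibit a $G$-transfer system that cannot be expressed as $\pi^*(\RR')$ for any relation $\RR'$ on $\Sub(G)/G$. By lossiness, fix subgroups $K \leqslant H \leqslant G$ and $g \in G$ with $K' := {}^gK \leqslant H$ but no $h \in N_G(H)$ satisfying ${}^hK = K'$. Let $\RR$ be the smallest $G$-transfer system containing $K \to H$. The technical heart of the proof is the claim that $K' \to H \notin \RR$.

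Granted the claim, the conclusion follows from a case analysis on $\RR'$. If $\RR = \pi^*(\RR')$ and $[K] \to [H] \in \RR'$, then $K' \to H \in \pi^{-1}(\RR') \subseteq \pi^*(\RR') = \RR$, contradicting the claim. If $[K] \to [H] \notin \RR'$, then no $G$-conjugate of the pair $(K, H)$ lies in $\pi^{-1}(\RR')$; yet $(K, H) \in \pi^*(\RR') = \RR$. An induction on derivation depth, formally parallel to the proof of the main claim, shows that no $G$-conjugate of $(K, H)$ can be produced from $\pi^{-1}(\RR')$ by any combination of closure axioms --- each attempted axiom application reduces either to an already-ruled-out $G$-conjugate or to a circular use of $(K, H)$ itself --- yielding a contradiction.

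To prove the main claim, construct $\RR$ as the increasing union of conjugation-closed sets $\RR_0 \subseteq \RR_1 \subseteq \cdots$ where $\RR_0 = \{(L, L) : L \leqslant G\} \cup \{({}^aK, {}^aH) : a \in G\}$ and $\RR_{n+1}$ is obtained from $\RR_n$ by applying one transitivity or restriction step and then closing under conjugation. Two side lemmas, proved by parallel induction on $n$, do the work: (L1) every non-reflexive relation in $\RR_n$ has target contained in some $G$-conjugate of $H$; and (L2) every element of $\mathcal{Y}_n := \{L \leqslant H : L \to H \in \RR_n\}$ is either $H$ itself or a subgroup of some $N_G(H)$-conjugate of $K$. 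With these in hand, suppose $(K', H) \in \RR_n$ with $n$ minimal. For $n = 0$ the only possibility is $(K', H) = ({}^aK, {}^aH)$ for some $a$, forcing $a \in N_G(H)$ and ${}^aK = K'$, contradicting the witness. For $n > 0$, by conjugation-closure of $\RR_{n-1}$ we may assume without loss of generality that the new relation produced from $\RR_{n-1}$ is $(K', H)$ itself: a restriction would require a pair $(X, H) \in \RR_{n-1}$ with $X = K'$, contradicting minimality; and a transitivity $(K', L), (L, H) \in \RR_{n-1}$ with $K' < L < H$ requires $L \in \mathcal{Y}_{n-1}$ strictly larger than $K'$, but (L2) forces $L \leqslant {}^hK$ for some $h \in N_G(H)$, so $|L| \leqslant |K| = |K'|$, contradicting $L \supsetneq K'$.

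The main obstacle is the inductive proof of (L2). One must verify that neither restriction nor transitivity introduces an element of $\mathcal{Y}$ outside $\{H\} \cup \bigcup_{h \in N_G(H)} \Sub({}^hK)$. By (L1), any restriction producing a new pair with target $H$ must come from a pair $(X, H) \in \RR_{n-1}$, and its output $X \cap H = X$ is already in $\mathcal{Y}_{n-1}$. Transitivity through $L \in \mathcal{Y}_{n-1}$ produces sources $X \leqslant L$, and if $L \leqslant {}^hK$ by induction, then $X \leqslant {}^hK$. Conjugation closure acts by $N_G(H)$ on elements of $\mathcal{Y}$, permuting the $N_G(H)$-orbit of $K$ and its sub-intersections. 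This bookkeeping, spread across the iterative construction, is routine but requires care to carry out completely, and it is the crux of the argument.
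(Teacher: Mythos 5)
Your overall strategy is the same as the paper's: reduce to the transfer system $\RR$ generated by the single witness arrow $K\to H$ and exploit a ``domination'' property of generated transfer systems (every derived arrow sits inside a conjugate of a generator). The paper gets this property by citing the explicit construction in Rubin's Appendix B and then runs a short direct argument; you instead prove the needed instances by hand via your lemmas (L1) and (L2). That part of your proposal is sound: I checked the inductive preservation of (L1) and (L2) under restriction, transitivity, and conjugation closure, and your main claim ($K'\to H\notin\RR$) does follow from (L2) by the order comparison you give. This is a legitimate, self-contained replacement for the citation, at the cost of the bookkeeping you acknowledge.

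The genuine gap is in the second case of your case analysis, where $[K]\to[H]\notin\RR'$. As written, your assertion that ``no $G$-conjugate of $(K,H)$ can be produced from $\pi^{-1}(\RR')$ by any combination of closure axioms'' is not something a parallel of your (L1)/(L2) induction can deliver, because those lemmas are proved for the specific generating set $\{({}^aK,{}^aH):a\in G\}$, whereas $\pi^{-1}(\RR')$ is an essentially arbitrary conjugation-closed relation; without further input the claim is simply false (a generator $(A,B)$ with $K\leqslant A$ and $H\leqslant B$ would yield $(K,H)$ by one restriction). What rescues the argument is the constraint $\pi^{-1}(\RR')\subseteq\pi^*(\RR')=\RR$, which your sketch never invokes in this case: one needs (i) a \emph{source-and-target} domination lemma for $\RR$ (not just the target statement (L1)) to conclude that every arrow $(A,B)\in\pi^{-1}(\RR')$ satisfies $A\leqslant{}^dK$, $B\leqslant{}^dH$ for some $d$, and (ii) the analogous domination lemma for the system generated by the \emph{arbitrary} set $\pi^{-1}(\RR')$, so that $(K,H)\in\pi^*(\RR')$ forces $K\leqslant{}^cA$, $H\leqslant{}^cB$ for some generator; comparing orders then gives $[A]=[K]$, $[B]=[H]$, contradicting the case hypothesis. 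Both steps are provable by the same induction-on-construction technique (for transitivity one passes to the dominator of the second factor, using that the relation refines inclusion), so the gap is fillable --- but note that once you have the general two-sided domination lemma, the case split becomes unnecessary and your argument collapses into the paper's direct one, which uses that lemma twice to show $[K]\to[H]\in\RR'$ and then once more to extract $h\in N_G(H)$ with ${}^hK={}^gK$.
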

	\begin{proof}
		Let $S$ be some set of arrows in $\Sub(G)$, and let $\RR$ be the $G$-transfer system generated by $S$. By the explicit construction of the transfer system generated by a set of arrows given in~\cite[Appendix B]{rubin}, one can show that $K\to H\in\RR$ necessarily implies that there must exist some $K'\to H'\in S$ and some $g\in G$ such that $K\leqslant {}^gK'$ and $H\leqslant {}^gH'$.
		
		Now suppose $K,{}^gK\leqslant H$, and let $\RR$ be the $G$-transfer system generated by $K\to H$. Suppose $\RR = \pi^*(\RR')$. Since $\pi^*(\RR')$ is generated by $\pi^{-1}(\RR')$, there must be some $K'\to H'\in\pi^{-1}(\RR')$ and some $h\in G$ such that $K\leqslant {}^hK'$ and $H\leqslant {}^hH'$. But also $\RR$ is generated by $S = \{K\to H\}$, so there must exist some $k\in G$ such that $K'\leqslant {}^kK$ and $H'\leqslant {}^kH'$. For order reasons this forces $K = {}^hK'$ and $H = {}^hH'$, so $[K]\to [H] = [K']\to[H']\in\RR'$.
		
		But we also have $[{}^gK] \to [H]$ and  $[K]\to [H]\in\RR'$ and $[H] = [K]$, so by definition of $\pi^{-1}(\RR')$ we must have ${}^gK\to H\in \RR$. Thus we can find $h\in G$ such that ${}^gK\leqslant {}^hK$ and $H\leqslant {}^hH$, and again for order reasons this forces ${}^gK = {}^hK$ and $H = {}^hH$, i.e., $K$ and ${}^gK$ are conjugate by way of $h\in N_G(H)$. Since $K,{}^gK\leqslant H$ was arbitrary, this shows $G$ is lossless.
	\end{proof}
	
	\begin{remark}\label{rem:whatarelifts}
		By \autoref{prop:lossless} and the general theory of Galois connections, when $G$ is lossless we have a bijection between $G$-transfer systems and categorical transfer systems $\RR$ on $\Sub(G)/G$ such that $\RR = \pi_*(\pi^*(\RR))$. We call such transfer systems \emph{liftable}. For example, considering $G=\mathfrak S_3$, as mentioned in the introduction a categorical transfer system is liftable if and only if whenever we have $\tau \to \mathfrak S_3$ for any transposition $\tau$ we also have $1 \to \mathfrak S_3$ (c.f., \autoref{fig:subgmodg} and the surrounding discussion).
		
		In general the condition $\RR = \pi_*(\pi^*(\RR))$ is not very evocative. In \autoref{lem:lifting} we give a more concrete set of conditions for a categorical transfer system to be liftable, but for general (lossless) $G$ these conditions can still be rather opaque and difficult to verify. In \autoref{sec:metacyclic} we will consider some special cases where the lifting conditions are simple enough to be visually intuitive, like in the case of $G = \mathfrak S_3$.
	\end{remark}
	
	The following corollary summarizes the results of this section.
	
	\begin{corollary}\label{cor:losslesssummary}
    If $G$ is a lossless group, then the function $\pi^*$ from liftable transfer systems on $\Sub(G)/G$ to transfer systems for $G$ (i.e., homotopy classes of $G$-$N_\infty$ operads) is a bijection; furthermore, for a general finite group $G$, bijectivity of $\pi^*$ (when restricted to liftable transfer systems) implies that $G$ is lossless.
	\end{corollary}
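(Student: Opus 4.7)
The plan is to read the corollary as a bookkeeping exercise on top of \autoref{prop:lossless} and the strong converse proposition immediately preceding it; no new combinatorial content is needed.

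First I would handle the forward direction. Assuming $G$ is lossless, to show $\pi^*$ is surjective onto $G$-transfer systems, I would take any $G$-transfer system $\RR$ and set $\RR' := \pi_*(\RR)$, which \autoref{lem:pushforward} guarantees is a categorical transfer system on $\Sub(G)/G$. Then \autoref{prop:lossless} gives $\pi^*(\RR') = \pi^*(\pi_*(\RR)) = \RR$, and liftability of $\RR'$ is the tautology $\pi_*(\pi^*(\RR')) = \pi_*(\RR) = \RR'$. For injectivity, suppose $\pi^*(\RR_1') = \pi^*(\RR_2')$ with both $\RR_i'$ liftable; applying $\pi_*$ to both sides and invoking liftability yields $\RR_1' = \pi_*(\pi^*(\RR_1')) = \pi_*(\pi^*(\RR_2')) = \RR_2'$.

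For the converse, assume $G$ is a finite group and that $\pi^*$, restricted to liftable categorical transfer systems on $\Sub(G)/G$, is a bijection onto the set of $G$-transfer systems. Surjectivity in particular says every $G$-transfer system has the form $\pi^*(\RR')$ for some liftable categorical transfer system $\RR'$, which is a fortiori a relation on $\Sub(G)/G$. The strong converse proposition preceding the corollary then applies verbatim to deduce that $G$ is lossless.

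The only possible obstacle is notational: being careful about which direction $\pi^*$ and $\pi_*$ are applied, and distinguishing whether one is working with a $G$-transfer system, a categorical transfer system, or merely a relation on $\Sub(G)/G$. Once this is tracked, the corollary is a direct repackaging of what has already been proved, and there is no substantive new argument to make.
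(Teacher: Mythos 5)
Your argument is correct and is essentially the paper's own (implicit) proof: the forward direction is exactly the standard consequence of the Galois connection $\pi^*\dashv\pi_*$ with identity unit established in \autoref{prop:lossless} (as already noted in \autoref{rem:whatarelifts}), and the converse is a direct application of the preceding ``strong converse'' proposition. No gaps; the bookkeeping with $\pi^*$, $\pi_*$, and liftability is handled correctly.
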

	
	\subsection{Examples and counterexamples of lossless groups}\label{sec:families}
	
	We continue with some observations regarding lossless groups, and provide some non-trivial examples of interesting families of lossless groups. Note that clearly any Abelian group is lossless.
	
	In general it appears that the class of lossless groups is rather poorly behaved under group-theoretic operations. However, we can prove that lossless groups at least play nicely with quotients.
	
	\begin{lemma}\label{quot}
		Any quotient of a lossless group is lossless.
	\end{lemma}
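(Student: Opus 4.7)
The plan is to reduce the statement to the losslessness of $G$ by using the subgroup correspondence theorem. Let $N \trianglelefteqslant G$ and write $\pi\colon G \to G/N$ for the quotient, and write $\bar x$ for $\pi(x)$ on both elements and subgroups. Suppose we are given subgroups $\bar K \leqslant \bar H \leqslant G/N$ and some $\bar g \in G/N$ with ${}^{\bar g}\bar K \leqslant \bar H$. The goal is to produce $\bar h \in N_{G/N}(\bar H)$ with ${}^{\bar h}\bar K = {}^{\bar g}\bar K$.

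First I would take full preimages: let $K := \pi^{-1}(\bar K)$ and $H := \pi^{-1}(\bar H)$, so that $N \leqslant K \leqslant H \leqslant G$, and let $g \in G$ be any lift of $\bar g$. The key observation is that because $N \leqslant K$ we have ${}^g N = N \leqslant {}^g K$, so ${}^g K$ already contains $N$, and hence ${}^g K = \pi^{-1}({}^{\bar g}\bar K) \leqslant \pi^{-1}(\bar H) = H$. Thus we have genuinely arranged $K, {}^gK \leqslant H$ at the level of $G$.

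Now I would apply losslessness of $G$ to this data: there exists $h \in N_G(H)$ with ${}^h K = {}^g K$. Pushing back down, $h \in N_G(H)$ implies ${}^{\bar h}\bar H = \pi({}^h H) = \pi(H) = \bar H$, so $\bar h \in N_{G/N}(\bar H)$; and ${}^h K = {}^g K$ gives ${}^{\bar h}\bar K = \pi({}^h K) = \pi({}^g K) = {}^{\bar g}\bar K$, as required.

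This is essentially a routine diagram chase, and I do not foresee a real obstacle: the only subtlety to keep track of is that one must take the \emph{full} preimages of $\bar K$ and $\bar H$ (so that they contain $N$) in order to guarantee that the hypothesis ${}^{\bar g}\bar K \leqslant \bar H$ lifts to the statement ${}^g K \leqslant H$ in $G$ rather than the weaker ${}^g K \cdot N \leqslant H$.
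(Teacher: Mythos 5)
Your proof is correct and is essentially the same as the paper's, which simply cites the standard $G$-equivariant poset isomorphism between $\Sub(G/N)$ and the interval $[N,G]\subseteq\Sub(G)$; your argument spells out exactly that correspondence via full preimages. The one subtlety you flag (taking full preimages so that ${}^{\bar g}\bar K\leqslant\bar H$ lifts to ${}^gK\leqslant H$) is precisely what makes the isomorphism work, so nothing is missing.
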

	\begin{proof}
		This follows directly from the standard equivariant poset isomorphism between $\Sub(G/N)$ and the interval $[N,G]\subseteq \Sub(G)^N$ for all $N\trianglelefteqslant G$.
	\end{proof}

	\begin{remark}
	The product of lossless groups can be lossy. Indeed, consider the group $G = C_2 \times A_4$, for which \aref{fig:c2a4} displays $\Sub(G)/G$.

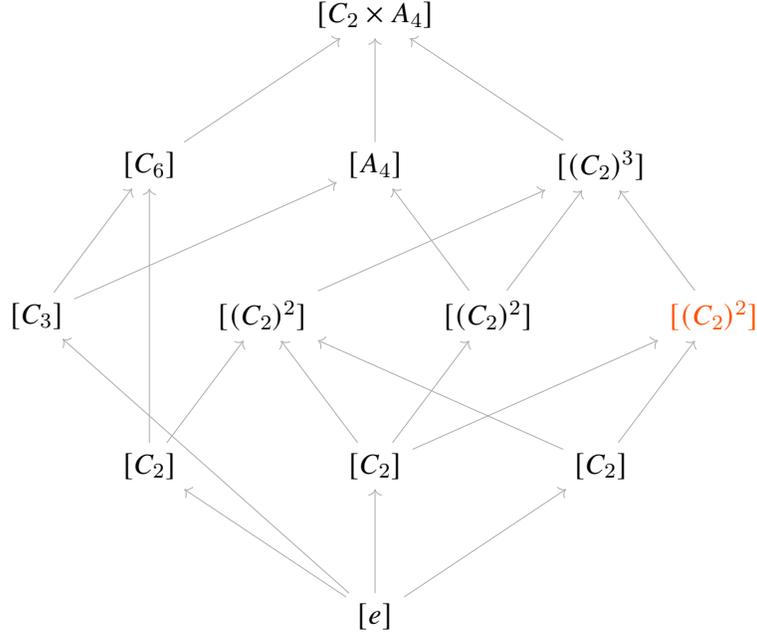
\begin{figure}[h]
\centering
\begin{tikzpicture}[xscale=1.5]
	\node[draw=none] (1) at (0, 0) {$[e]$};
	\node [draw=none] (3) at (0,2) {$[C_2]$};
	\node [draw=none] (2) at (-2,2) {$[C_2]$};
	\node [draw=none] (4) at (2,2) {$[C_2]$};
	\node [draw=none] (5) at (-3,4) {$[C_3]$};
	\node [draw=none] (6) at (1,4) {$[(C_2)^2]$};
	\node [draw=none] (7) at (-1,4) {$[(C_2)^2]$};
	\node [draw=none, color=gRed] (8) at (3,4) {$[(C_2)^2]$};
	\node [draw=none] (9) at (-2,6) {$[C_6]$};
	\node [draw=none] (11) at (0,6) {$[A_4]$};
	\node [draw=none] (10) at (2,6) {$[(C_2)^3]$};
	\node[draw=none] (12) at (0, 8) {$[C_2 \times A_4]$};
	\draw[thin,black!30,->]  (1) to (2);
	\draw[thin,black!30,->]  (1) to (3);
	\draw[thin,black!30,->]  (1) to (4);
	\draw[thin,black!30,->]  (1) to (5);
	\draw[thin,black!30,->]  (3) to (6);
	\draw[thin,black!30,->]  (2) to (7);
	\draw[thin,black!30,->]  (3) to (7);
	\draw[thin,black!30,->]  (4) to (7);
	\draw[thin,black!30,->]  (3) to (8);
	\draw[thin,black!30,->]  (4) to (8);
	\draw[thin,black!30,->]  (2) to (9);
	\draw[thin,black!30,->]  (5) to (9);
	\draw[thin,black!30,->]  (6) to (10);
	\draw[thin,black!30,->]  (7) to (10);
	\draw[thin,black!30,->]  (8) to (10);
	\draw[thin,black!30,->]  (5) to (11);
	\draw[thin,black!30,->]  (6) to (11);
	\draw[thin,black!30,->]  (9) to (12);
	\draw[thin,black!30,->]  (10) to (12);
	\draw[thin,black!30,->]  (11) to (12);
\end{tikzpicture}
\caption{The structure of $\Sub(G)/G$ for $G = C_2 \times A_4$.}\label{fig:c2a4}
\end{figure}	
	
	In $G$ we have a conjugacy class $[(C_2)^2]$ which contains only two copies of $[C_2]$ displayed in red in \aref{fig:c2a4}. Since each $(C_2)^2$ contains three copies of $C_2$, this implies the existence of some $H = (C_2)^2$ which contains two conjugate copies of $C_2$, $K$ and $^gK$. However, the normalizer of $(C_2)^2$ is $(C_2)^3$, which is Abelian and hence $K$, $^gK$ cannot be conjugate in $(C_2)^3 = N_G(H)$. That is, $C_2 \times A_4$ fails to be lossless even though $C_2$ and $A_4$ are lossless.

	Heuristically it seems very likely that being lossless does not imply that all (even normal) subgroups are lossless. Indeed, if $K,{}^gK\leqslant H\leqslant L\leqslant G$ are such that $K,{}^gK$ are not conjugate in $N_L(H)$ (so in particular $L$ is definitely not lossless), then it may still be the case that $K,{}^gK$ are conjugate in $N_G(H)$ (so $G$ might be lossless). However the authors have been unable to find an explicit example of a lossless group with a lossy subgroup.
	\end{remark}
	
	A  substantial class of examples of lossless groups comes from the following observation. Recall that a \emph{T-group} is a group in which every subnormal subgroup is normal, i.e., $K\trianglelefteqslant H$ and $H\trianglelefteqslant G$ implies $K\trianglelefteqslant G$~\cite[\S 13.4]{robinson}. A subgroup $K\leqslant G$ is \emph{pronormal} if for all $g\in G$, $K$ and ${}^gK$ are conjugate in $\lrangle{K,{}^gK}$~\cite[\S I.6]{DH92}. We note that if $K,{}^gK\leqslant H$ then $\lrangle{K,{}^gK}\leqslant H\leqslant N_G(H)$, so if every subgroup of $G$ is pronormal then $G$ is lossless. 
	
	Before we continue, let us recall the theory of Hall subgroups, and Hall's theorem, which can be seen as a generalization of Sylow's theorem in the solvable case.
	
	\begin{defn}
	A \emph{Hall subgroup} of a group $G$ is a subgroup whose order is coprime to its index.  If $\pi$ is a set of primes, then a \emph{Hall $\pi$-subgroup} is a subgroup whose order is a product of primes in $\pi$.
	\end{defn}
	
	\begin{theorem}[Hall's Theorem~\cite{hall}]
		Let $G$ be a finite solvable group and $\pi$ any set of primes. Then $G$ has a Hall $\pi$-subgroup, and any two such Hall $\pi$-subgroups are conjugate. Moreover, any subgroup whose order is a product of primes in $\pi$ is contained in some Hall $\pi$-subgroup.
	\end{theorem}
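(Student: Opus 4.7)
The plan is to prove Hall's theorem by induction on $|G|$, exploiting the fact that a minimal normal subgroup of a finite solvable group is necessarily an elementary Abelian $p$-group for some prime $p$. The three claims (existence, conjugacy, and containment) will be proved simultaneously, since the inductive step for each typically needs the others at lower order.

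Let $G$ be a finite solvable group, $\pi$ a set of primes, and $N \trianglelefteqslant G$ a minimal normal subgroup, which is an elementary Abelian $p$-group. Write $m$ for the $\pi$-part of $|G|$ and $s$ for its $\pi'$-part. I would split into two cases. If $p \in \pi$, then by induction $G/N$ has a Hall $\pi$-subgroup $\bar H = H/N$; its preimage $H$ in $G$ satisfies $|H| = |N|\cdot|\bar H|$, which is a $\pi$-number of index $s$, so $H$ is a Hall $\pi$-subgroup of $G$. If $p \notin \pi$, apply induction to $G/N$ to find a Hall $\pi$-subgroup $\bar K = K/N$ of $G/N$, so $K$ has order $m \cdot |N|$. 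If $K$ is a proper subgroup of $G$, then by induction $K$ itself has a Hall $\pi$-subgroup of order $m$, which is a Hall $\pi$-subgroup of $G$. The remaining case is $K = G$, i.e., $G/N$ is a $\pi$-group and $N$ is a normal Hall $\pi'$-subgroup. The main obstacle lies here: I would invoke the Schur--Zassenhaus theorem (which applies since $\gcd(|N|,|G/N|)=1$) to produce a complement $H$ of $N$ in $G$, and this complement has order $m$ and hence is a Hall $\pi$-subgroup.

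For conjugacy and containment, suppose $H_1$, $H_2$ are Hall $\pi$-subgroups of $G$ and $L$ is any $\pi$-subgroup; I want to show $H_1$ is conjugate to $H_2$, and $L$ lies in some conjugate of $H_1$. Pass to $G/N$: the images $H_1 N/N$, $H_2 N/N$, $LN/N$ are $\pi$-subgroups of $G/N$, and the first two are Hall $\pi$-subgroups. By induction on $|G|$, there is $g \in G$ with $H_1 N = {}^g(H_2 N)$ and $L N \leqslant {}^x(H_1 N)$ for some $x \in G$. After replacing $H_2$ by ${}^g H_2$ and $L$ by a conjugate, we may assume $H_1 N = H_2 N$ and $L \leqslant H_1 N$. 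If $p \in \pi$ this already forces $H_1 = H_1 N = H_2 N = H_2$ and $L \leqslant H_1$. If $p \notin \pi$, then inside $H_1 N$ both $H_1$ and $H_2$ are complements to $N$, and $L$ is a $\pi$-subgroup; a second application of Schur--Zassenhaus (in its conjugacy form, which does require solvability of one of $N$ or $H_1$ and is automatic here) gives that complements of $N$ in $H_1 N$ are $N$-conjugate, and that any $\pi$-subgroup of $H_1 N$ lies in some conjugate of $H_1$.

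The main obstacle, as indicated, is the case $K = G$ in the existence argument (equivalently, producing a $\pi$-complement to a normal $\pi'$-Hall subgroup); this is exactly where solvability is essential and where Schur--Zassenhaus does the heavy lifting. All other steps are bookkeeping with orders and indices, using that $|G|_\pi = m$ is preserved under the reductions and that conjugation and containment can be lifted along $G \to G/N$ whenever the obstruction in the fiber $N$ is handled by Schur--Zassenhaus.
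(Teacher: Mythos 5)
The paper does not prove this statement at all: Hall's theorem is quoted as a classical result with a citation to Hall's original paper, so there is no in-paper argument to compare against. Your proof is the standard textbook argument and is essentially correct: induct on $|G|$, use that a minimal normal subgroup $N$ of a solvable group is elementary abelian, split on whether $p\in\pi$, and handle the only genuinely nontrivial configuration (a normal Hall $\pi'$-subgroup needing a complement) via Schur--Zassenhaus --- and since $N$ is abelian you only ever need the easy abelian case of Schur--Zassenhaus (vanishing of $H^1$ and $H^2$ with coprime coefficients), so the argument is self-contained. Two small points you elide: in the case $p\in\pi$ you should note explicitly that $H_1N$ is a $\pi$-subgroup containing the Hall $\pi$-subgroup $H_1$, hence equals $H_1$ by maximality of order; and the final containment claim does not follow verbatim from the conjugacy form of Schur--Zassenhaus --- you need the extra observation (via Dedekind's modular law) that for a $\pi$-subgroup $L\leqslant H_1N$, both $L$ and $H_1\cap LN$ are complements to $N$ in $LN$, and are therefore conjugate there. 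Neither is a gap so much as a line to be filled in; the proof goes through.
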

	
	\begin{prop}\label{prop:tgroup}
		Any (finite) solvable T-group is lossless.
	\end{prop}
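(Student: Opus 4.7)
The approach is to strengthen the proposition to the statement that every subgroup of a finite solvable T-group is pronormal, and then invoke the observation recorded just before the proposition: if every subgroup of $G$ is pronormal, then for any $K,{}^gK\leqslant H$ we get $h\in\langle K,{}^gK\rangle\leqslant H\leqslant N_G(H)$ with ${}^hK={}^gK$, which is exactly losslessness. So the real work is proving pronormality of arbitrary subgroups.

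For this I would invoke Gasch\"utz's structure theorem for finite solvable T-groups (see e.g.~\cite[\S 13.4]{robinson}): such a $G$ has the form $L\rtimes B$, where $L$ is the nilpotent residual of $G$ --- an abelian normal Hall subgroup of odd order --- $B$ is a Dedekind group, and $B$ acts on $L$ by power automorphisms $x\mapsto x^{n(b)}$. Two consequences fall out immediately: every subgroup of $L$ is normal in $G$ (power automorphisms preserve subgroups of an abelian group), and every subgroup of $B$ is normal in $B$ (since $B$ is Dedekind).

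Now fix an arbitrary $K\leqslant G$, set $N=K\cap L$, and note $N\trianglelefteqslant G$ by the previous paragraph. Since $|N|$ divides $|L|$ and $[K:N]$ divides $|B|$, these orders are coprime, so the Schur--Zassenhaus theorem supplies a splitting $K=N\rtimes K_0$ in which $K_0$ is a Hall subgroup of $K$. For any $g\in G$, normality of $N$ yields ${}^gK=N\rtimes {}^gK_0$, and both $K_0$ and ${}^gK_0$ are Hall subgroups of identical order in the solvable subgroup
\[
M\;:=\;N\cdot \langle K_0,\,{}^gK_0\rangle\;\subseteq\;\langle K,{}^gK\rangle.
\]
Hall's theorem applied inside $M$ then produces an element $h\in M\subseteq\langle K,{}^gK\rangle$ with ${}^hK_0={}^gK_0$, whence ${}^hK={}^gK$. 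This is exactly pronormality of $K$.

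The main obstacle is the appeal to Gasch\"utz's structure theorem, which is doing the heavy lifting; everything afterwards is essentially Hall's theorem applied to $M$. A secondary technical point is the verification that $K_0$ and ${}^gK_0$ really are Hall subgroups of a common solvable subgroup of $\langle K,{}^gK\rangle$ of the same order --- this is where normality of $N=K\cap L$ in $G$ is used crucially, since without it the two subgroups would not share the same normal complement to split off.
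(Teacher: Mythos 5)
Your proof is correct in outline but follows a genuinely different route from the paper. The paper also reduces losslessness to the statement that every subgroup of a finite solvable T-group is pronormal, but it gets there via Peng's theorem (pronormality of all $p$-subgroups characterizes solvable T-groups, and forces supersolvability), an induction on the number of prime divisors of $|K|$, and Rose's criterion that a product $SP$ of pronormal subgroups with $P\leqslant N_G(S)$ is pronormal. You instead invoke Gasch\"utz's structure theorem $G = L\rtimes B$ and run a direct Schur--Zassenhaus/Hall argument inside $\langle K,{}^gK\rangle$. Your version has the virtue of being self-contained past the structure theorem and of exhibiting the conjugating element explicitly; the paper's version leans on two black boxes (Peng and Rose) but avoids the structure theory entirely. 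Both yield the stronger pronormality statement.

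One step of yours needs more justification than you give it. For Hall's theorem to conjugate $K_0$ to ${}^gK_0$ inside $M = N\cdot\langle K_0,{}^gK_0\rangle$, these must be Hall $\pi$-subgroups of $M$ (where $\pi$ is the set of primes dividing $|B|$), i.e.\ $|K_0|$ must equal the full $\pi$-part of $|M|$. Normality of $N$ alone does not give this: the subgroup $J = \langle K_0,{}^gK_0\rangle$ could a priori have image in $B\cong G/L$ strictly larger than $\overline{K_0}$, in which case $[J:K_0]$ would not be a $\pi'$-number and $K_0$ would fail to be Hall in $M$. This is exactly where the Dedekind property of $B$ enters: $\overline{K_0}$ is normal in $B$, so $\overline{{}^gK_0} = {}^{\bar g}\overline{K_0} = \overline{K_0}$, hence the image of $J$ in $B$ is $\overline{K_0}$ and the $\pi$-part of $|M|$ is $|K_0|$. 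You list this consequence of Dedekindness at the outset but never actually deploy it, attributing the crucial role to normality of $N$ instead; with that one sentence added, the argument is complete.
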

	\begin{proof}
		A theorem of Peng tells us that a group $G$ is a solvable T-group if and only if every $p$-subgroup of $G$ is pronormal, and in this case $G$ is in fact supersolvable~\cite{peng}. We claim that in this case all subgroups of $G$ are pronormal, and hence $G$ is lossless. This result seems to be well-known among group theorists (see, e.g.,~\cite{ferrara,giovanni,kurdachenko}), but the authors of the present paper were unable to find a proof in the literature so we include one here.
		
		Suppose that $G$ is a (super)solvable T-group, and let $K\leqslant G$. We will prove by induction on the number of prime divisors of $|K|$ that $K$ is pronormal. By Peng's theorem this holds in the case where $|K|$ only has a single prime divisor. Since $G$ is supersolvable, $K\leqslant G$ is also supersolvable, so $K$ has a Sylow $p$-subgroup $P\leqslant K$ and a normal Hall $p'$-subgroup $S\trianglelefteqslant K$ for some $p$. In particular, $P\leqslant N_G(S)$. By induction we can assume $S$ and $P$ are both pronormal in $G$. By~\cite[1.8]{rose}, this implies $K = SP$ is pronormal in $G$.
	\end{proof}
	
	Although the solvable T-groups form a fairly large class of groups, this class notably excludes most interesting $p$-groups. Indeed, since every subgroup of a nilpotent group is subnormal, a $p$-group is a T-group if and only if every subgroup is normal, so the only non-Abelian $p$-groups obtained this way are groups of the form $Q_8\times (\Z/2)^n$, where $Q_8$ is the ordinary quaternion group~\cite{dedekind}. In light of this, the remainder of this section will largely focus on $p$-groups and determining conditions under which a $p$-group is lossless.
	
	The following result is trivial, but will show that \autoref{p3-example} provides a minimal example of lossy $p$-groups.
	
	\begin{prop}\label{p3}
		Let $G$ be any $p$-group of order at most $p^3$. Then $G$ is lossless.
	\end{prop}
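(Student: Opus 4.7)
The plan is to case on the order of $H$, exploiting the fact that in a $p$-group of order at most $p^3$, every proper subgroup other than those of order $p$ has index at most $p$ and is therefore normal. Concretely, suppose $K \leqslant H \leqslant G$ and $g\in G$ with ${}^gK\leqslant H$; we need to produce $h\in N_G(H)$ with ${}^hK = {}^gK$.

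First I would handle the case $|H|\leqslant p$. If $|H|\leqslant 1$ this is vacuous, so assume $|H| = p$. The only subgroups of $H$ are the trivial subgroup and $H$ itself. If $K$ is trivial then ${}^gK = K$ and we may take $h = e$. If $K = H$, then ${}^gK = {}^gH \leqslant H$ forces equality by order, so already $g\in N_G(H)$ and we take $h = g$.

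Next I would handle the case $|H|\geqslant p^2$. Since $|G|\leqslant p^3$, the index $[G:H]$ is at most $p$, and any subgroup of a finite group whose index equals the smallest prime dividing the order of the group is normal (standard application of the action of $G$ on $G/H$). Hence $H\trianglelefteqslant G$, so $N_G(H) = G$ and we can take $h = g$.

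These two cases exhaust the possibilities (since $|H|$ is a $p$-power dividing $|G|\leqslant p^3$), so the lossless condition holds. There is essentially no obstacle here: the only ingredient beyond trivial order considerations is the normality of index-$p$ subgroups in a $p$-group, which is why the statement genuinely requires the hypothesis $|G|\leqslant p^3$ and fails once $H$ can have index $\geqslant p^2$ without being normal, as illustrated in the forthcoming \autoref{p3-example}.
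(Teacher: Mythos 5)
Your proof is correct and follows essentially the same route as the paper's: both arguments reduce to the dichotomy that either $H$ has index at most $p$ in $G$ (hence is normal, so $N_G(H)=G$) or $|H|\leqslant p$ (forcing $K$ to be trivial or all of $H$). The only cosmetic difference is that you invoke normality of smallest-prime-index subgroups directly, whereas the paper derives $[G:H]\geqslant p^2$ for non-normal $H$ from the normalizer-growth property of nilpotent groups.
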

	\begin{proof}
		Suppose $K,{}^gK\leqslant H$. We want to show $K,{}^gK$ are conjugate in $N_G(H)$. We can assume $K<H$ since otherwise $K = {}^gK$ and there's nothing to show. Since $G$ is nilpotent we have $H<N_G(H)$, and we're already done if $N_G(H) = G$, so we must have $[G:H]\geqslant p^2$. But if $|G|\leqslant p^3$ then this forces $K = \{e\} = {}^gK$ so there's nothing to show.
	\end{proof}
	
	\begin{example}\label{p3-example}
		Let $p\neq 2$ and let $N = (\Z/p)^3$. Let a generator of $T = C_p$ act on $N$ by the matrix
		\[A := \begin{bmatrix}
		1       & 1 & 1\\
		0       & 1 & 1\\
		0       & 0 & 1\\
		\end{bmatrix}.\]
		Let $G = N\rtimes T$. Note that $|G| = p^4$. Let $K = \lrangle{(0,0,1)}\subseteq N$ and $L = \lrangle{(0,0,1),(1,1,1)}\subseteq N$. Let $g\in T$ be a generator. Then $K,{}^gK\leqslant L$, but $N_G(L) = N$ is Abelian and hence $K\neq{}^gK$ cannot be conjugate in $N_G(L)$. Thus $G$ is not lossless.
	\end{example}
	
	\begin{remark}
		The reason we needed to assume $p\neq 2$ in \autoref{p3-example} is because if $p=2$ then the matrix $A$ has order $4$ instead of $2$. By an exhaustive search one can show that every group of order $2^4$ is lossless.
	\end{remark}
	
	We recall for the proof of \aref{metap} that a subgroup $H$ of $G$ is said to be \emph{characteristic} if every automorphism of $G$ fixes $H$, that is, $\phi(H)=H$ for every automorphism $\phi$ of $G$.
	
	\begin{prop}\label{metap}
		If $G$ has a cyclic normal subgroup of prime index, i.e., if $G$ is an extension
		\[1\to C_n\to G\to C_p\to 1\]
		for some prime $p$, then $G$ is lossless.
	\end{prop}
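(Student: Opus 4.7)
The plan is to split into two cases based on whether $K$ is contained in the cyclic normal subgroup $N := C_n$. If $K \leqslant N$, then since $N$ is cyclic $K$ is the unique subgroup of $N$ of its order, hence characteristic in $N$ and thus normal in $G$. Consequently ${}^g K = K$, and we may take $h = e$.

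Now suppose $K \not\leqslant N$. Then $H$ also fails to lie in $N$, and since $[G:N] = p$ is prime this forces $HN = G$, so that $H$ surjects onto $G/N \cong C_p$. The strategy is to show that $g$ itself lies in $N_G(H)$, which lets us take $h = g$. To this end, pick $h_0 \in H$ with $h_0 N = gN$ (possible by the surjection) and set $n := h_0^{-1} g \in N$. Since $h_0 \in H \leqslant N_G(H)$, it suffices to show $n \in N_G(H)$. Let $L := H \cap N$; this is a subgroup of the cyclic group $N$, hence characteristic in $N$ and therefore normal in $G$, so $nLn^{-1} = L$. Pick any $s \in K \setminus N$; since $s \in H \setminus L$ and $H/L \cong C_p$, we have $H = \lrangle{L, s}$. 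The hypothesis ${}^g K \leqslant H$ combined with $h_0 \in N_G(H)$ gives ${}^n K \leqslant H$, whence $nsn^{-1} \in H$, so $nHn^{-1} = \lrangle{L, nsn^{-1}} \leqslant H$; comparing orders forces equality, and therefore $n \in N_G(H)$ and $g = h_0 n \in N_G(H)$ as desired.

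The argument is fairly short and the main obstacle is just keeping the case analysis straight. The two insights driving everything are that cyclicity of $N$ makes every subgroup of $N$ characteristic in $N$ (and thus normal in $G$, so in particular $L$ and any $K \leqslant N$ are automatically normal in $G$), and that primality of $[G:N]$ forces the dichotomy $H \leqslant N$ versus $HN = G$, which in the latter case lets us absorb the $H/L$-part of $g$ into an element of $H \leqslant N_G(H)$ and reduce to conjugation by an element of $N$.
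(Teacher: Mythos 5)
Your proof is correct and follows essentially the same route as the paper's: both split on whether $K\leqslant N$, use that subgroups of the cyclic normal subgroup are characteristic hence normal in $G$, and in the remaining case show $g\in N_G(H)$. The paper reaches that last conclusion slightly more directly by writing $H = K(H\cap N)$ and computing ${}^gH = {}^gK(H\cap N)\leqslant H$, avoiding your factorization $g = h_0 n$.
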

	\begin{proof}
		Let $K,{}^gK\leqslant H\leqslant G$, and let $N = C_n\trianglelefteqslant G$. Since every subgroup of $C_n$ is characteristic, all subgroups of $N$ are normal in $G$. Thus if $K\leqslant H\cap N$ then $K$ is normal so $K = {}^gK$ and there's nothing to show. Otherwise the quotient $H\to H/{H\cap N}\cong C_p$ is non-trivial restricted to $K\leqslant H$, and hence $H = K(H\cap N)$. Since $H\cap N\leqslant N$ is normal in $G$ this implies
		\[{}^gH = {}^g(K(H\cap N)) = {}^gK{}^g(H\cap N) = {}^gK(H\cap N)\leqslant H\]
		and hence $g\in N_G(H)$.
	\end{proof}
	
	We will now wish to discuss some families of groups which are amenable to the above result. We will define some of the groups in question as they may not be standard knowledge. From their description via generators and relations it is clear that they all have cyclic normal subgroups of order 2.
	\begin{defn}\label{defn:groups}\leavevmode
	\begin{itemize}
		\item The \emph{dicyclic group} of order $4n$, denoted $\mathsf{Dic}_n$ is defined via generators and relations as
		\[
			\mathsf{Dic}_n := \langle r,s \mid r^{2n} = s^4 = 1, srs = r^{2n-1}  \rangle.
		\]
		\item The \emph{semidihedral group} of order $2^n$, denoted $\mathsf{SD}_n$ is defined via generators and relations as
		\[
			\mathsf{SD}_n := \langle r,s \mid r^{2^{n-1}} =s^2 = 1 , srs = r^{2^{n-2}-1} \rangle.
		\]
		\item The \emph{modular maximal-cyclic group} of order $2^n$, denoted $\mathsf{MM}_n$ is defined via generators and relations as
		\[
			\mathsf{MM}_n := \langle r,s \mid r^{2^{n-1}} =s^2 = 1 , srs = r^{2^{n-2}+1} \rangle.
		\]
	\end{itemize}
	\end{defn}
	
	\begin{corollary}\label{2}
		Any dihedral group, dicyclic (e.g., generalized quaternion) group, semidihedral group, or modular maximal-cyclic group is lossless.
	\end{corollary}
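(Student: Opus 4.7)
The plan is to deduce this immediately from \autoref{metap} by exhibiting, for each of the four families, a cyclic normal subgroup of prime index. Since \autoref{metap} applies to any extension of $C_p$ by $C_n$, it suffices to show that each of $D_n$, $\mathsf{Dic}_n$, $\mathsf{SD}_n$, $\mathsf{MM}_n$ fits into such an extension with $p = 2$.

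First, I would observe that in each of the presentations given in \autoref{defn:groups} (and in the usual presentation of $D_n$), the subgroup $\langle r\rangle$ is cyclic, of index $2$, and normal: in every case the defining relation $srs^{\pm 1} = r^k$ shows that conjugation by the generator $s$ sends $\langle r\rangle$ into itself, and since $\langle r\rangle$ and $s$ together generate the whole group, $\langle r\rangle$ is normal. Specifically, $\langle r\rangle \cong C_n$ in $D_n$ (of index $2$ in $|D_n| = 2n$), $\langle r\rangle \cong C_{2n}$ in $\mathsf{Dic}_n$ (of index $2$ in $|\mathsf{Dic}_n| = 4n$), and $\langle r\rangle \cong C_{2^{n-1}}$ in both $\mathsf{SD}_n$ and $\mathsf{MM}_n$ (of index $2$ in $|\mathsf{SD}_n| = |\mathsf{MM}_n| = 2^n$).

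With this in hand, each of these groups is an extension of $C_2$ by a cyclic group, so \autoref{metap} applies directly to give the conclusion. There is no real obstacle here; the only thing to double-check is that the relation $s^4 = 1$ in $\mathsf{Dic}_n$ (as opposed to $s^2 = 1$) does not cause trouble, but it doesn't: normality of $\langle r\rangle$ depends only on the conjugation relation $srs^{-1} \in \langle r\rangle$, and the resulting quotient $\mathsf{Dic}_n/\langle r\rangle$ still has order $2$.
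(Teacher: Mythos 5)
Your proposal is correct and matches the paper's approach exactly: the paper introduces these families immediately after \autoref{metap} precisely because, as it notes, each presentation exhibits $\langle r\rangle$ as a cyclic normal subgroup of index $2$, and the corollary is then immediate. No further comment is needed.
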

	
	\begin{corollary}
		If $q$ is a prime power with $q\equiv 3\mod 4$, then the Sylow $2$-subgroup of $\PSL_3(\FF_q)$ is lossless. Similarly, if $q\equiv 1\mod 4$ then the Sylow $2$-subgroup of $\mathsf{PSU}_3(\FF_q)$ is lossless.
	\end{corollary}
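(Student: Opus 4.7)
The plan is to reduce this corollary to the immediately preceding \autoref{2} by identifying the relevant Sylow $2$-subgroups with semidihedral groups. This is a classical fact from finite group theory, going back essentially to the Alperin--Brauer--Gorenstein analysis of groups with semidihedral Sylow $2$-subgroups: for $q$ an odd prime power, the Sylow $2$-subgroup of $\mathsf{PSL}_3(\mathbb{F}_q)$ is semidihedral precisely when $q\equiv 3\pmod 4$, and the Sylow $2$-subgroup of $\mathsf{PSU}_3(\mathbb{F}_q)$ is semidihedral precisely when $q\equiv 1\pmod 4$. Once this identification is in hand, \autoref{2} gives the conclusion immediately.

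Concretely, I would first compute the $2$-part of the order. We have $|\mathsf{PSL}_3(\mathbb{F}_q)| = q^3(q-1)^2(q+1)(q^2+q+1)/\gcd(3,q-1)$, and for $q\equiv 3\pmod 4$ the $2$-part is $(q-1)_2^2\cdot (q+1)_2 = 4\cdot (q+1)_2$, where $(n)_2$ denotes the $2$-part of $n$. A direct matrix construction then exhibits a semidihedral subgroup of this order: take the normalizer $N$ of a Sylow $2$-torus inside a suitable $\mathsf{GL}_2$ block, extended by the Weyl-type involution swapping the two diagonal entries. The cyclic part comes from a generator of $(\mathbb{F}_{q^2}^\times)_2$ embedded via a non-split torus, and the involution acts by inversion composed with the appropriate $2^{n-2}-1$ twist forced by the $\mathsf{PSL}$ quotient. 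The $\mathsf{PSU}_3$ case is dual, with the roles of $q-1$ and $q+1$ swapped, which accounts for the swap of congruence conditions.

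Rather than carrying out this calculation from scratch, the cleanest route in the paper is to cite the standard reference (e.g.\ Alperin--Brauer--Gorenstein, or Gorenstein's \emph{Finite Groups} text) for the identification of Sylow $2$-subgroups as semidihedral, and then invoke \autoref{2}. The main obstacle, such as it is, is purely bookkeeping: making sure the definition of $\mathsf{SD}_n$ used in \autoref{defn:groups} matches the semidihedral group appearing in the group-theoretic literature (there are several presentations, and some sources reserve ``semidihedral'' for $n\geqslant 4$). Since \autoref{defn:groups} uses the standard presentation $\langle r,s\mid r^{2^{n-1}}=s^2=1,\ srs=r^{2^{n-2}-1}\rangle$ and \autoref{metap} applies to it via the cyclic normal subgroup $\langle r\rangle$ of index $2$, no further adjustment is needed.
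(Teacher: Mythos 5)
Your proposal is correct and follows exactly the paper's route: the paper's entire proof is the one-line observation that in each case the Sylow $2$-subgroup is semidihedral (citing the standard group-theoretic literature), after which \autoref{2} applies. The extra order computations and matrix constructions you sketch are not needed, but they do no harm.
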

	\begin{proof}
		In each of these cases the Sylow $2$-subgroup is semidihedral~\cite{semidihedral}.
	\end{proof}
	
	\begin{example}
		Let $q\mid \phi(p^3)/p = p(p-1)$, and let a generator $g\in T = C_{pq}$ act on $N = \Z/p^3$ via multiplication by some element in $\Z/p^3$ of order $qp$, and let $G = N\rtimes T$. Let $N_1 = pN$ and $N_2 = p^2N$. Let $K = T^q$, $H = KN_2$, and $L = TN_2$. Note that $N_1,N_2$ are characteristic in $N$ and hence normal in $G$, so $H$ and $L$ are indeed subgroups of $G$.
		
		A computation shows that $H$ is normal in $G$, but if $p\neq 2$ then $N_G(K) = N_G(L) = TN_1 < G$. Thus $G$ is not lossless, showing the assumption that $[G:N]$ is prime in \autoref{metap} is essential.
	\end{example}
	
	\begin{prop}\label{extraspec}
		If the derived subgroup of $G$ has prime order, then $G$ is lossless.
	\end{prop}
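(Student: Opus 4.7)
The plan is to exploit the fact that $|G'| = p$ is prime, so the possible intersections of $G'$ with any subgroup $H$ are severely constrained: either $G' \cap H = G'$ or $G' \cap H = 1$. I would show that in each of these two cases the conjugating element $g$ can be replaced by an element of $N_G(H)$, and this dichotomy exhausts all possibilities.

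Fix $K \leqslant H \leqslant G$ and $g \in G$ with ${}^gK \leqslant H$; I want to produce $h \in N_G(H)$ with ${}^hK = {}^gK$. First I would treat the case $G' \leqslant H$. Then $H/G' \leqslant G/G'$ is a subgroup of an abelian group, hence normal, so $H \trianglelefteqslant G$ and $N_G(H) = G$. In this case $h := g$ works trivially. Second, I would treat the case $G' \cap H = 1$. For any $k \in K \leqslant H$, conjugation gives ${}^gk \in H$ by hypothesis, so the commutator $[g,k] = ({}^gk) k^{-1}$ lies in $H$; but $[g,k] \in G'$ as well, so $[g,k] \in G' \cap H = 1$. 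Hence $g$ commutes with every element of $K$, so ${}^gK = K$, and I may take $h := e$.

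Since $G' \cap H$ is a subgroup of the prime-order group $G'$, these two cases are exhaustive, which concludes the argument. There is no real obstacle here — the proof essentially writes itself once one observes the key dichotomy. The only thing to verify carefully is the commutator identity and the fact that $H/G'$ being abelian forces $H$ to be normal, both of which are routine.
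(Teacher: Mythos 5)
Your proof is correct. The dichotomy $G'\cap H \in \{1, G'\}$ is indeed exhaustive by primality, the case $G'\leqslant H$ correctly yields $H\trianglelefteqslant G$ since subgroups containing $G'$ are exactly the preimages of (automatically normal) subgroups of the abelian quotient $G/G'$, and the commutator computation $[g,k]=({}^gk)k^{-1}\in G'\cap H=1$ in the other case is airtight and even shows $g$ centralizes $K$. Your route differs from the paper's: the paper never looks at $G'\cap H$, but instead considers the normal subgroup $KG'$, notes ${}^gK\leqslant KG'$ and that $K$ is maximal in $KG'$ because $[KG':K]\leqslant p$, and concludes that ${}^gK\neq K$ forces $\lrangle{K,{}^gK}=KG'\supseteq G'$, hence $G'\leqslant H$ and $H$ is normal. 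The two arguments are contrapositive in shape (the paper: ``${}^gK\neq K$ implies $H$ normal''; you: ``$G'\not\leqslant H$ implies ${}^gK=K$''), but the key computations are genuinely different --- a maximality/generation argument versus a commutator identity. Yours is arguably the more elementary of the two, since each branch is a one-line verification, while the paper's version localizes the argument at $K$ in a way that generalizes more readily (it is reused nearly verbatim in the proof of \autoref{prop:cp2}, where the relevant intersection $K\cap G'$ need not be trivial or everything).
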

	\begin{proof}
		Let $K,{}^gK\leqslant H$. Since $G/G'$ is Abelian, any subgroup containing $G'$ is normal in $G$. Thus in particular $KG'$ is normal, and hence ${}^gK\leqslant KG'$. Since $|G'|=p$ we have $[KG':K]\leqslant p$, so $K$ is maximal in $KG'$. Thus if ${}^gK\neq K$ then $\lrangle{K,{}^gK} = KG'$ and hence $G'\leqslant KG'\leqslant H$. Thus $H$ is normal, and as such, there is nothing to check.
	\end{proof}
	
	\begin{defn}
	A $p$-group $G$ is said to be \emph{extraspecial} if its center $Z(G)$ is cyclic of order $p$ and the quotient $G/Z(G)$ is a non-trivial elementary abelian $p$-group. 
	\end{defn}
	
	\begin{corollary}\label{prop:extraspec}
		Any extraspecial group is lossless.
	\end{corollary}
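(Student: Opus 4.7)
The plan is to reduce immediately to the already-established \autoref{extraspec}, which says that any group whose derived subgroup has prime order is lossless. So the whole task is to verify that an extraspecial $p$-group $G$ has $|G'| = p$.

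First, I would observe that because $G/Z(G)$ is elementary abelian (in particular abelian), the commutator subgroup $G'$ is contained in $Z(G)$. Since $Z(G)$ is cyclic of order $p$ by the extraspecial hypothesis, there are only two possibilities: either $G' = \{e\}$ or $G' = Z(G)$.

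Next, I would rule out the first case. If $G' = \{e\}$ then $G$ is abelian, and then $G = Z(G)$ would force $G/Z(G)$ to be trivial; but the definition of extraspecial requires $G/Z(G)$ to be non-trivial. Hence $G' = Z(G)$, which has order $p$. Applying \autoref{extraspec} then gives that $G$ is lossless, and there is nothing further to check. No real obstacle arises here: the argument is a one-line unwinding of the definition of extraspecial followed by invocation of the previous proposition.
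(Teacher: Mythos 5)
Your proof is correct and matches the paper's intent exactly: the corollary is stated without proof precisely because, with the paper's definition of extraspecial, $G/Z(G)$ abelian forces $G'\leqslant Z(G)$, nontriviality of $G/Z(G)$ rules out $G'=\{e\}$, and so $G'=Z(G)$ has order $p$, letting \autoref{extraspec} apply. Nothing further to add.
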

	
	\begin{corollary}
		For any prime $p$ the subgroup of upper triangular matrices in $\mathsf{GL}_2(\FF_p)$ is lossless.
	\end{corollary}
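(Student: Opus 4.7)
My plan is to reduce the claim to \autoref{prop:extraspec} by showing that the derived subgroup of the Borel subgroup $B\leqslant \mathsf{GL}_2(\FF_p)$ has prime order (or is trivial). Write $T\leqslant B$ for the diagonal torus and $U\leqslant B$ for the unipotent radical, so that $B = T\ltimes U$ with $T\cong \FF_p^\times\times \FF_p^\times$ and $U\cong \FF_p$ (where a generic element $u(b) = \left(\begin{smallmatrix}1 & b\\ 0 & 1\end{smallmatrix}\right)$).

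First I would dispose of the case $p = 2$: here $T$ is trivial so $B = U \cong C_2$ is abelian and hence lossless. For $p$ odd, I claim $[B,B] = U$. Containment $[B,B]\leqslant U$ is immediate: the determinant, together with either diagonal entry, factors $B$ through the abelian quotient $T$, so the kernel $U$ contains all commutators. For the reverse containment, a direct computation gives
\[\left[\begin{pmatrix}a & 0\\ 0 & d\end{pmatrix},\,u(b)\right] = u\bigl(b(a/d - 1)\bigr),\]
and since $p$ is odd we may choose $a, d\in \FF_p^\times$ with $a/d\neq 1$, so the map $b\mapsto b(a/d - 1)$ is a bijection on $\FF_p$, exhibiting every element of $U$ as a commutator. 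Thus $[B,B] = U$ has prime order $p$.

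Now \autoref{prop:extraspec} applies verbatim to give that $B$ is lossless. I do not expect any real obstacle here; the only mildly delicate point is making sure the commutator computation produces \emph{all} of $U$, which requires the mild hypothesis $p > 2$ so that the torus acts non-trivially on $U$ by conjugation. (Note also that the statement of \autoref{prop:extraspec} requires prime order of the derived subgroup; the $p=2$ case, where $[B,B]$ is trivial, is handled by the trivial observation that abelian groups are lossless rather than by a direct appeal to \autoref{extraspec}.)
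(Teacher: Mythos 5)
Your argument is correct and is exactly the one the paper intends: the corollary follows from \autoref{extraspec} because the derived subgroup of the Borel subgroup is the unipotent radical $U\cong C_p$, which has prime order (with the abelian case $p=2$ handled by the trivial observation that abelian groups are lossless). One small note: the criterion you are invoking is \autoref{extraspec} (derived subgroup of prime order), not \autoref{prop:extraspec} (extraspecial groups are lossless) --- the Borel subgroup is of course not extraspecial, so make sure the citation points at the former.
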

	
	\begin{prop}\label{prop:cp2}
		If $\gcd(m,p)=1$ and $G\cong (C_p)^2\rtimes C_m$, then $G$ is lossless.
	\end{prop}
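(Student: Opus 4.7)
The plan is to exploit the fact that $N = (C_p)^2$ is a normal Hall $p$-subgroup of $G$, since $\gcd(m,p)=1$, so that Schur--Zassenhaus applies uniformly to every subgroup. For any $H \leqslant G$, set $M_H := H \cap N$; then $M_H$ is a normal Hall $p$-subgroup of $H$, so $H = M_H \rtimes T_H$ for some $p'$-Hall complement $T_H$. Moreover, $T_H \cong H/M_H$ embeds into $G/N \cong T = C_m$, so $T_H$ is cyclic. Given $K \leqslant H$ and $g \in G$ with ${}^gK \leqslant H$, I would write $K = M_K \rtimes T_K$ with $M_K = K \cap N$, so that ${}^gK = {}^gM_K \rtimes {}^gT_K$ and ${}^gM_K \leqslant M_H$.

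First I would case on $M_H$, which is either trivial, cyclic of order $p$, or all of $N$. The two extreme cases are essentially immediate. If $M_H = \{e\}$, then $H$ embeds into $T$ via the quotient map and is thus cyclic, so $K$ and ${}^gK$ coincide as subgroups of $H$ of the same order. If $M_H = N$, then $H/N = T_H$ is a subgroup of the abelian quotient $G/N$, hence normal in $G/N$, so $H \trianglelefteqslant G$ and $N_G(H) = G$, letting us take $h = g$.

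The substantive case, which I expect to be the main obstacle, is $M_H \cong C_p$. Here I would first match the $p'$-parts by an adjustment inside $M_H$: since $T_K$ and ${}^gT_K$ are $p'$-subgroups of $H$, Schur--Zassenhaus supplies $n_1, n_2 \in M_H$ with ${}^{n_1^{-1}}T_K \leqslant T_H$ and ${}^{n_2^{-1}g}T_K \leqslant T_H$; as $T_H$ is cyclic, these two subgroups of $T_H$ have the same order and therefore coincide, yielding ${}^gT_K = {}^hT_K$ with $h := n_2 n_1^{-1} \in M_H \leqslant N_G(H)$. To conclude I would split on $M_K$: either $M_K = \{e\}$, in which case ${}^hM_K = \{e\} = {}^gM_K$ trivially, or $M_K = M_H$, in which case ${}^gM_K = M_H$ for order reasons and ${}^hM_K = M_H$ since $M_H$ is abelian, yielding ${}^hK = {}^gK$ in either sub-case. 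The only real subtlety is to ensure that the witness $h$ lies in $M_H$ (and hence in $N_G(H)$) rather than merely in $H$, which is exactly what cyclicity of $T_H$ provides.
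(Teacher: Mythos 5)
Your proof is correct, and it takes a genuinely different route from the paper's. The paper cases on whether $K$ meets the derived subgroup $G'$: if $K\cap G'\neq 1$ it reuses the argument of the preceding proposition (every subgroup containing $G'$ is normal, and $K$ has index at most $p$ in the normal subgroup $KG'$) to conclude $H\trianglelefteqslant G$, and if $K\cap G'=1$ it places $K$ and ${}^gK$ inside Hall $p'$-subgroups of $H$ and finishes with conjugacy of Hall subgroups plus cyclicity --- the same two ingredients that power your key step. You instead case on $M_H = H\cap N$ and, in the only substantive case $M_H\cong C_p$, split $K$ into its $p$-part and $p'$-part and match each separately, arranging the conjugating element to lie in $M_H\leqslant N_G(H)$. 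What your version buys is that it is self-contained and handles uniformly the situations where $G'$ is a proper subgroup of $N$ (e.g.\ when the $C_m$-action is trivial on a factor), where the paper's inference ``$K\cap G'=1$ hence $K$ lies in a Hall $p'$-subgroup'' is not literally valid since $K$ need not be a $p'$-group; the paper's version is shorter because it leverages the normality trick already set up for \autoref{extraspec}. One small point of hygiene: the containment of $T_K$ and ${}^gT_K$ in conjugates of $T_H$ is most cleanly cited as Hall's theorem in the solvable group $H$ (which the paper records, including the statement that every $p'$-subgroup lies in a Hall $p'$-subgroup) rather than bare Schur--Zassenhaus, though since $M_H$ is abelian the strengthened Schur--Zassenhaus you invoke does apply.
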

	\begin{proof}
		Let $K,{}^gK\leqslant H$. Again any subgroup containing $G'$ is normal in $G$. If $K\cap G'\neq 1$, then by the diamond identity $[KG':K] = [G':G'\cap KG']\leqslant p$ and the same argument as before applies. Thus we can assume $K\cap G'=1$, and hence $K,{}^gK$ are contained in some $p'$-Hall subgroups $S,S'$ of $H$. Thus we can find some $h\in H$ such that ${}^hS = S'$ and hence ${}^{h}K\leqslant S'$. But $S'$ is a cyclic group so $|{}^hK| = |{}^gK|$ implies ${}^hK = {}^gK$, and of course $h\in N_G(H)$ as required.
	\end{proof}
	
	\begin{corollary}
		For any prime $p$ the subgroup of upper triangular matrices in $\mathsf{SL}_2(\FF_{p^2})$ is lossless.
	\end{corollary}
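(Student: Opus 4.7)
The plan is to realize the group $G$ of upper triangular matrices in $\mathsf{SL}_2(\FF_{p^2})$ as a semidirect product of the form $(C_p)^2 \rtimes C_m$ with $\gcd(m,p)=1$, so that \autoref{prop:cp2} applies directly.

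First I would write out the explicit structure of $G$. A matrix in $G$ has the form
\[
\begin{pmatrix} a & b \\ 0 & a^{-1} \end{pmatrix}, \qquad a \in \FF_{p^2}^\times,\ b \in \FF_{p^2}.
\]
Let $U \leqslant G$ be the unipotent subgroup (those with $a = 1$), and $D \leqslant G$ the diagonal subgroup (those with $b = 0$). The map $b \mapsto \begin{pmatrix} 1 & b \\ 0 & 1 \end{pmatrix}$ is an isomorphism from $(\FF_{p^2}, +)$ onto $U$, so $U \cong (C_p)^2$. Similarly, $a \mapsto \diag(a, a^{-1})$ identifies $D$ with $\FF_{p^2}^\times \cong C_{p^2 - 1}$.

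Next I would verify that $G = U \rtimes D$. A direct calculation shows $U \trianglelefteqslant G$: conjugating a unipotent matrix by a diagonal matrix $\diag(a,a^{-1})$ sends $b$ to $a^2 b$, which remains in $U$. The intersection $U \cap D$ is trivial, and any element of $G$ factors uniquely as a product of a diagonal and a unipotent matrix, so $G = U \rtimes D \cong (C_p)^2 \rtimes C_{p^2-1}$.

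Finally, since $\gcd(p^2 - 1, p) = 1$, the hypotheses of \autoref{prop:cp2} are met with $m = p^2 - 1$, so $G$ is lossless. There is no real obstacle here; the only thing to double-check is the semidirect product decomposition, which is routine.
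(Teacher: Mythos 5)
Your proof is correct and is exactly the argument the paper intends: the paper states this as an immediate corollary of \autoref{prop:cp2}, leaving implicit the identification of the upper triangular subgroup as $(C_p)^2\rtimes C_{p^2-1}$ via the unipotent and diagonal subgroups, which you have verified in detail. The coprimality $\gcd(p^2-1,p)=1$ closes the argument just as you say.
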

	
	\begin{example}\label{lem:goodsl}
		If $p=2,3,$ or $5$, then $\SL_2(\FF_p)$ is lossless.
	\end{example}
	
	In fact, for $p=2,3,5$ the group $\SL_2(\FF_p)$ satisfies a very strong additional property: any two isomorphic subgroups are conjugate. We shall say that such a group is \emph{universally lossless}. Groups like this are quite useful for identifying ``lossless pieces'' of larger groups containing them as subgroups. 
	
	\begin{prop}
		Let $G$ be an arbitrary (finite) group, and suppose $K,{}^gK\leqslant H\leqslant G$. Suppose further that $H\leqslant L\leqslant G$, where $L$ is a universally lossless group. Then $K$ and ${}^gK$ are conjugate in $N_G(H)$.
	\end{prop}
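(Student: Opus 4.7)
The plan is to apply universal losslessness of $L$ twice. First, since $K$ and ${}^gK$ are isomorphic subgroups of $L$ (they are conjugate in $G$, hence abstractly isomorphic, and both lie in $L$), universal losslessness produces some $\ell\in L$ with ${}^\ell K = {}^gK$. Writing $K' := {}^gK = {}^\ell K$ and using $N_L(H)\leqslant N_G(H)$, it suffices to exhibit $n\in N_L(H)$ with ${}^n K = K'$.

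Second, observe that $H$ and ${}^\ell H$ are two isomorphic subgroups of $L$, both containing $K'$ (since $K' = {}^\ell K\leqslant {}^\ell H$ and $K'\leqslant H$ by hypothesis). The key step is to produce an element $m\in N_L(K')$ with ${}^m H = {}^\ell H$. Granting such $m$, setting $n := m^{-1}\ell$ yields
\[
	{}^n H \;=\; m^{-1}\,({}^\ell H)\,m \;=\; m^{-1}\,({}^m H)\,m \;=\; H
\]
and
\[
	{}^n K \;=\; m^{-1}\,({}^\ell K)\,m \;=\; m^{-1}K'm \;=\; K',
\]
where the last equality uses $m\in N_L(K')$. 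Hence $n\in N_L(H)\leqslant N_G(H)$ realizes the required conjugation.

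The main obstacle is producing $m\in N_L(K')$. Universal losslessness only gives $L$-conjugacy of $H$ and ${}^\ell H$, so restricting the conjugator to $N_L(K')$ carries genuine additional content: it amounts to a ``dual'' form of losslessness asserting that $N_L(K')$ acts transitively on the set of subgroups of $L$ isomorphic to $H$ and containing $K'$. I would pursue this by an inductive argument --- on $|L|$, on the height of the pair $(K',H)$ in $\Sub(L)$, or by a simultaneous induction establishing both the original and the dual losslessness statements --- invoking universal losslessness repeatedly at each step. For the universally lossless examples of primary interest to the paper, namely $\SL_2(\FF_p)$ for $p = 2,3,5$, one may alternatively verify the property by direct inspection of the subgroup lattice.
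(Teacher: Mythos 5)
Your first step matches the paper's: use the fact that $K\cong {}^gK$ are both subgroups of $L$ to replace $g$ by some $\ell\in L$ with ${}^\ell K={}^gK$. But after that you go astray, and the "main obstacle" you identify --- producing $m\in N_L(K')$ with ${}^mH={}^\ell H$ --- is a genuine gap in your argument (you explicitly leave it unproven, deferring to a hoped-for induction or case-by-case inspection) and, more importantly, it is entirely unnecessary. In the paper, \emph{universally lossless} means a \emph{lossless} group with the additional property that isomorphic subgroups are conjugate; it is not just the latter condition in isolation (the examples $\SL_2(\FF_p)$ for $p=2,3,5$ are first shown to be lossless, and the stronger property is introduced as an ``additional'' one). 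Once you know $\ell\in L$ and $K,{}^\ell K\leqslant H\leqslant L$, you should simply apply the definition of losslessness \emph{to the group $L$} with the pair $K\leqslant H$: it hands you directly an $h\in N_L(H)$ with ${}^hK={}^\ell K={}^gK$, and $N_L(H)\leqslant N_G(H)$ finishes the proof.

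The ``dual losslessness'' statement you propose (that $N_L(K')$ acts transitively on the subgroups of $L$ isomorphic to $H$ and containing $K'$) does not follow from the isomorphic-implies-conjugate property alone, and there is no reason to expect a clean induction to establish it; this is essentially the same subtlety that makes losslessness a nontrivial condition in the first place. So the fix is not to fill in your second step but to delete it: invoke the losslessness half of the hypothesis on $L$ immediately after your first step.
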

	\begin{proof}
		Since $K\cong {}^gK$ and $K,{}^gK\leqslant L$, by assumption $K$ and ${}^gK$ are conjugate in $L$, so we can assume $g\in L$. Thus since $L$ is lossless and $K,{}^gK\leqslant H\leqslant L$, we must have $K$ and ${}^gK$ are conjugate in $N_L(H)\leqslant N_G(H)$.
	\end{proof}
	
	\section{Lifting criteria}\label{sec:metacyclic}
	
	In \autoref{cor:losslesssummary} we saw that for a lossless group $G$, $G$-transfer systems are in bijection with liftable transfer systems on $\Sub(G)/G$. However as discussed in \autoref{rem:whatarelifts}, explicitly identifying which transfer systems on $\Sub(G)/G$ are liftable can be extremely difficult in general. In this section we consider some special cases where the lifting conditions are tangible. 	We first begin with a generality.
	
	\begin{lemma}\label{lem:lifting}
		Let $G$ be a lossless group, and let $\RR$ be a categorical transfer system on $\Sub(G)/G$. Then $\RR$ is liftable if and only if for all $[K]\, \RR\, [H]$ and any $K'\leqslant H$ with $[K'] = [K]$ we have $[K\cap K']\,\RR\,[H]$.
	\end{lemma}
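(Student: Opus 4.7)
The plan is to prove the two directions separately. For the \emph{forward direction}, assuming $\RR$ is liftable so $\RR = \pi_*(\pi^*(\RR))$, and given $[K] \,\RR\, [H]$ together with $K' \leqslant H$ satisfying $[K'] = [K]$, I pick a lift $K \leqslant H$ of the arrow in $\pi^*(\RR)$. Losslessness furnishes some $n \in N_G(H)$ with $K' = {}^n K$, and conjugation closure in $\pi^*(\RR)$ yields $K' \to H \in \pi^*(\RR)$. Restriction closure then provides $K \cap K' \to K'$, and composing with $K' \to H$ by transitivity gives $K \cap K' \to H \in \pi^*(\RR)$. Applying $\pi_*$ delivers $[K \cap K'] \,\RR\, [H]$.

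For the \emph{backward direction}, assuming the intersection condition, I aim to show that the relation $\widetilde{\RR}$ on $\Sub(G)$ defined by $K \to H \in \widetilde{\RR}$ iff $K \leqslant H$ and $[K] \,\RR\, [H]$ is itself a $G$-transfer system. Granting this, $\pi^{-1}(\RR) \subseteq \widetilde{\RR}$ and minimality force $\pi^*(\RR) = \widetilde{\RR}$, so $\pi_*(\pi^*(\RR)) = \pi_*(\widetilde{\RR}) = \RR$, proving $\RR$ is liftable. Reflexivity, transitivity, conjugation closure, and refinement of inclusion for $\widetilde{\RR}$ are immediate; the crux is restriction closure, which reduces to showing that for $K, L \leqslant H$ with $[K] \,\RR\, [H]$, one has $[K \cap L] \,\RR\, [L]$.

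I would verify this by strong induction on $|K|$. The base $|K|=1$ follows at once from the categorical transfer system axiom applied to $[\{e\}] \,\RR\, [H]$ with $[L] \leqslant [H]$. For the inductive step, introduce $\mathcal{C}_L := \{K' \leqslant H : [K'] = [K],\ K \cap L \leqslant K'\}$, which always contains $K$. If some $K' \in \mathcal{C}_L$ differs from $K$, the intersection condition gives $[K \cap K'] \,\RR\, [H]$ with $|K \cap K'| < |K|$; the containment $K \cap L \leqslant K'$ forces $(K \cap K') \cap L = K \cap L$, so the inductive hypothesis applied to the smaller arrow $[K \cap K'] \,\RR\, [H]$ delivers $[K \cap L] \,\RR\, [L]$. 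When $\mathcal{C}_L = \{K\}$, I instead show that $[K \cap L]$ is already maximal in $[K]^\downarrow \cap [L]^\downarrow$: any putative strictly larger element, which by losslessness at $H$ takes the form $[K_0 \cap L]$ with $K_0 \leqslant H$ conjugate to $K$, supplies a copy of $K \cap L$ inside $K_0 \cap L \leqslant L$, and losslessness at $L$ together with transport through $N_G(H) \cap N_G(L)$ produces a conjugate $K'' \leqslant H$ of $K$ with $K'' \neq K$ and $K \cap L \leqslant K''$, contradicting $\mathcal{C}_L = \{K\}$; maximality then lets the categorical transfer system axiom conclude directly.

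The hard part is this final maximality argument: navigating losslessness simultaneously at $H$ and at $L$ to produce the contradictory $K''$, which requires careful bookkeeping of the conjugating elements involved to ensure they lie in the appropriate normalizer intersection.
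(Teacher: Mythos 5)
Your overall architecture is sound and genuinely different from the paper's: you run a single induction on $|K|$ with a dichotomy on whether $K$ is the unique conjugate of itself inside $H$ containing $K\cap L$, whereas the paper inducts on the whole triple $(K,H,L)$ and splits on whether a maximal class $[M]$ below $[K]$ and $[L]$ and above $[K\cap L]$ equals $[L]$. Your forward direction is fine (and slightly over-engineered: $K'\to H$ lies in $\pi^{-1}(\RR)\subseteq\pi^*(\RR)$ directly, since $K'\leqslant H$ and $[K']=[K]\,\RR\,[H]$, so no losslessness is needed there), the reduction to restriction-closure of $\widetilde{\RR}=\pi^{-1}(\RR)$ matches the paper, and your case $\mathcal{C}_L\neq\{K\}$ is a correct and clean use of the intersection hypothesis plus the inductive hypothesis on $|K\cap K'|<|K|$.

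The gap is in the maximality argument when $\mathcal{C}_L=\{K\}$. As written, you first apply losslessness at $H$ to place a putative $[M]>[K\cap L]$ inside some $K_0\cap L$ with $K_0\leqslant H$ conjugate to $K$, and then try to move a conjugate of $K\cap L$ lying in $K_0\cap L$ back onto $K\cap L$ itself by conjugating in $N_G(L)$; but that conjugation destroys the containment $K_0\leqslant H$, and losslessness only ever supplies elements of $N_G(L)$ or of $N_G(H)$ separately --- it never produces an element of $N_G(H)\cap N_G(L)$, which is what your ``transport'' step demands. The claim is nevertheless true, and the repair is to reverse the order of the two applications of losslessness. Given $[K\cap L]\lneq[M]\leqslant[K],[L]$, first conjugate so that $M\leqslant L$; since some ${}^a(K\cap L)\leqslant M\leqslant L$ and $K\cap L\leqslant L$, losslessness applied \emph{in $L$} yields $b\in N_G(L)$ with ${}^b(K\cap L)={}^a(K\cap L)$, and replacing $M$ by ${}^{b^{-1}}M$ arranges $K\cap L\leqslant M\leqslant L$ on the nose. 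Only now invoke losslessness \emph{in $H$}: from ${}^cM\leqslant K\leqslant H$ and $M\leqslant L\leqslant H$ one gets $h\in N_G(H)$ with ${}^hM={}^cM$, so $M\leqslant {}^{h^{-1}}K=:K_0\leqslant H$. Then $K\cap L\leqslant M\leqslant K_0$, so $K_0\in\mathcal{C}_L$, and $K_0\neq K$ (else $M\leqslant K\cap L$, contradicting $|M|>|K\cap L|$), contradicting $\mathcal{C}_L=\{K\}$. With that reordering your proof closes, and is arguably tidier than the paper's triple induction; as written, however, the decisive step rests on a conjugation that losslessness does not provide.
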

	\begin{proof}
		Suppose $\RR$ is liftable and $K,K'\leqslant H$ are as in the statement of the lemma. Then $K\to H,K'\to H\in\pi^*(\RR)$ by definition, so by restriction-closure and transitivity we have $(K\cap K')\to K'\to H\in\pi^*(\RR)$ and hence $[K\cap K']\to [H]\in \pi_*(\pi^*(\RR)) = \RR$.
		
		Now suppose conversely that $\RR$ is a categorical transfer system on $\Sub(G)/G$ such that the condition in the statement of the lemma holds. We claim that $\pi^{-1}(\RR)$ is a $G$-transfer system. Note that if this claim is true then
		\[\pi_*(\pi^*(\RR)) = \pi_*(\pi^{-1}(\RR))\leqslant \RR\leqslant \pi_*(\pi^*(\RR))\]
		where the first inequality follows directly from the definition of $\pi_*$ and $\pi^{-1}$ and the last inequality follows from $\pi^*\dashv\pi_*$.


		Clearly $\pi^{-1}(\RR)$ is a conjugation-closed partial order refining $\leqslant$. All we need to show is that it is restriction-closed. In other words, we need to show that given $K \to H \in \pi^{-1}(\RR)$ and $L \leqslant H$ we have $(K {\cap} L) \to L \in \pi^{-1}(\RR)$. We prove this by induction on the tuple $(K,H,L)$. That is, assuming the claim is true for all tuples $(K',H',L')$ with $K' \leqslant K, L' \leqslant L, H' \leqslant H$, and at least one of these inequalities is strict, we want to show this implies the claim for $(K,H,L)$. The base case where $K = H = L$ is the trivial subgroup holds by reflexivity of $\RR$.

		
		By the induction hypothesis with $K$ and $H$ fixed but $L'<L$, we can assume $(K\cap L)\to L'\in\pi^{-1}(\RR)$ for all $L'<L$ with $K\cap L\leqslant L'$. Let $M\leqslant L$ such that $[K\cap L]\leqslant [M]$ and $[M]$ is maximal among $[M]\leqslant [K],[L]$. Let $g\in G$ such that ${}^g(K\cap L)\leqslant M$. Since $G$ is lossless we can assume $g\in N_G(L)$. Thus ${}^{g^{-1}}M\leqslant L$ and $[M] = [{}^{g^{-1}}M]$, so we can assume without loss of generality that $K\cap L\leqslant M$.
		
		If $M < L$, then by the induction hypothesis we have $K\cap L\to M\in\pi^{-1}(\RR)$ and hence by definition $[K\cap L]\,\RR\,[M]$. But since $\RR$ is a categorical transfer system, $[M]$ is maximal for $[M]\leqslant [K],[L]\leqslant [H]$, and $[K]\,\RR\,[H]$, we have $[K\cap L]\,\RR\,[M]\,\RR\,[L]$, and hence $K\cap L\to L\in \pi^{-1}(\RR)$.
		
		On the other hand, if $M = L$ then $[L]\leqslant [K]$, so ${}^{g^{-1}}L\leqslant K$ for some $g\in G$, and again since $G$ is lossless we can assume $g\in N_G(H)$. Thus $K,{}^gK\leqslant H$ and $[K]\,\RR\,[H]$, so by hypothesis we have $[K\cap {}^gK]\,\RR\,[H]$. By restriction-closure this then implies $[K\cap {}^gK]\,\RR\,[{}^gK]$, so $K\cap {}^gK\to {}^gK\in\pi^{-1}(\RR)$. If ${}^gK<H$, then by the induction hypothesis with $K' = K\cap {}^gK$, $H' = {}^gK$, and $L' = L$ we obtain $K\cap {}^gK\cap L = K\cap L\to L\in\pi^{-1}(\RR)$. But if ${}^gK = H$ then also $K = H$ and hence $K\cap L = L$ so there's nothing to show in this case.
	\end{proof}
	
	Although \autoref{lem:lifting} is nice in its generality, and is certainly more explicit than the basic definition, checking it still requires understanding subtle details about the way that subgroups of $G$ embed into each other. Thankfully, for certain groups the subgroup structure is nice enough to make this condition particularly explicit. Some of the results described below hold in greater generality than stated, but the purpose of this section is mainly illustrative so we avoid excess generality. We recall that, in essence, a \emph{Frobenius group} is a transitive permutation group on a finite set such that no non-trivial element fixes more than one point, and some non-trivial element fixes a point. They can be characterized as those groups $G$ possessing a proper, nontrivial subgroup $T$ (called the \emph{Frobenius complement}) such that $T \cap {}^gT$ is the trivial subgroup for every $g \in G \smallsetminus T$. The identity element along with members of $G\smallsetminus \bigcup_{g\in G}{}^g T$ form the \emph{Frobenius kernel} of $G$.

	\begin{defn}\label{defn:mfgroup}
		A \emph{metacyclic Frobenius group} (mcF group) is a Frobenius group $G$ such that both the kernel $N\trianglelefteqslant G$ and the complement $T\leqslant G$ are cyclic groups.
	\end{defn}
	
	\begin{remark}
		Any mcF group is of the form
		\[G\cong \Z/n\rtimes T\]
		where $T$ is a cyclic subgroup of $(\Z/n)^\times$ such that $x-1\in (\Z/n)^{\times}$ for all $x\neq 1\in T$. Conversely every group of this form is an mcF group. For a general mcF group $G$, we will assume that we have passed through this bijection, and for clarity, write shall write $N$ for the group $(\mathbb{Z}/n)$. That is, $G \cong N \rtimes T$.
	\end{remark}
	
	\begin{remark}
		Although not immediately obvious from the definition, one can show that a group $G$ is metacyclic Frobenius if and only if $G$ is both a metacyclic group and a Frobenius group, explaining the naming choice. This collection of groups also implicitly appears in work of Khukhro--Makareno \cite{MFgroups1, MFgroups2}, but the authors are not aware of any other place that they have been studied.
			\end{remark}
	
	\begin{example}\label{ex:dihedralmcf}
		Let $n$ be odd. Then the dihedral group $D_n = \mathbb{Z}/n \rtimes \mathbb{Z}/2$ of order $2n$ is a mcF group.
	\end{example}
	
	\begin{example}
		Let $p$ be any prime. Then $\mathsf{AGL}_1(\mathbb{F}_p) = \mathbb{F}_p\rtimes \mathbb{F}_p^\times$, the  group of affine linear transformations of the finite field $\mathbb{F}_p$, is a mcF group. Here we are using Gauss's observation that $(\mathbb{Z}/n)^\times$ is cyclic when $n = p^k$.
	\end{example}
	
	\begin{defn}
		Let $K\leqslant G$. We call $K\cap N$ the \emph{base} of $K$ and write $K\cap N = N_K$.
	\end{defn}
	
	\begin{lemma}\label{lem:mcfsub}
		Any subgroup $K$ of an mcF group $G$ such that $N_K\neq \{e\}$ and $K\not\leqslant N$ is itself an mcF group with kernel $N_K$. If $N_K = \{e\}$ or $K\leqslant N$ then $K$ is a cyclic group.
	\end{lemma}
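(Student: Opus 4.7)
The plan is to handle the three cases in the statement separately. When $K\leqslant N$, the subgroup $K$ sits inside the cyclic group $N\cong\mathbb{Z}/n$ and is thus cyclic. When $N_K = K\cap N = \{e\}$, the projection $K\to G/N\cong T$ is injective, so $K$ embeds in the cyclic group $T$ and is again cyclic.

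The heart of the matter is the main case $N_K\neq\{e\}$ and $K\not\leqslant N$. First I would show that $K$ is metacyclic: $N_K\trianglelefteqslant K$ is cyclic as a subgroup of $N$, and $K/N_K$ embeds into $G/N\cong T$ and so is cyclic. Any Frobenius group has kernel and complement of coprime orders (in our mcF setting this can be read off the condition that $x-1\in(\mathbb{Z}/n)^\times$ for every non-identity $x\in T$), hence $\gcd(|N_K|,|K/N_K|)=1$, and Schur--Zassenhaus produces a splitting $K = N_K\rtimes T_K$ with $T_K$ cyclic.

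Finally, I would verify that $K$ is in fact a Frobenius group with kernel $N_K$ and complement $T_K$, which since $N_K$ is abelian reduces to showing $T_K$ acts fixed-point-freely on $N_K$. Every non-identity element of $T_K$ lies outside $N$ (since $T_K\cap N = T_K\cap N_K = \{e\}$), and so lies inside a unique conjugate of the Frobenius complement $T$ of $G$; because $T_K$ is cyclic, all of its elements lie in a single such conjugate ${}^gT$. The fixed-point-free action of $T$ on $N$ transports via conjugation to show that ${}^gT$ acts fixed-point-freely on $N$, and restricting to the invariant subgroup $N_K\leqslant N$ gives the required Frobenius condition on $K$. Since $N_K$ and $T_K$ are both cyclic, $K$ is mcF with kernel $N_K$, as claimed.

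The only mildly subtle point is verifying that $T_K$ lies inside a single conjugate of $T$ rather than being scattered across several; but this is immediate from cyclicity, since a generator of $T_K$ lies in one conjugate ${}^gT$ and conjugates of $T$ are themselves subgroups, hence contain all powers of that generator.
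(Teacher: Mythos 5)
Your argument is correct, but it is genuinely different from what the paper does: the paper's entire proof is a citation to Lemma~2.2 of Feit's work, whereas you give a self-contained elementary proof. Your route --- (i) $K\cap N$ and $K/(K\cap N)$ are cyclic as subgroups of $N$ and $G/N\cong T$ respectively, (ii) coprimality of $|N|$ and $|T|$ plus Schur--Zassenhaus splits $K$ as $N_K\rtimes T_K$, (iii) a generator of $T_K$ lies outside the kernel $N$, hence in some conjugate ${}^gT$, so the cyclic group $T_K$ sits inside ${}^gT$ and inherits the fixed-point-free action on $N$, hence on $N_K$ --- is sound, and each step checks out. Two points deserve slightly more care than your write-up gives them: the coprimality of $|N|$ and $|T|$ is not literally ``read off'' the unit condition (one argues that if a prime $p$ divided both, an element $x\in T$ of order $p$ would satisfy $x\equiv 1\bmod p$, so $x-1$ could not be a unit mod $n$), and you implicitly invoke the standard equivalence between ``$A\rtimes B$ is Frobenius with kernel $A$ and complement $B$'' and ``$B$ acts fixed-point-freely on $A$,'' which should be stated or cited since the paper defines Frobenius groups via the malnormality condition $T\cap{}^gT=\{e\}$. (Also, you only need that a generator of $T_K$ lies in \emph{some} conjugate of $T$, which is immediate from the paper's definition of the kernel; uniqueness of that conjugate is true but unnecessary.) The trade-off is the usual one: the paper's citation is shorter, while your argument makes the lemma independent of the external reference.
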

	\begin{proof}
		This follows immediately from~\cite[Lemma 2.2]{feit}.
	\end{proof}
	
	\begin{lemma}\label{lem:mcfnormal}
		A subgroup $K$ of an mcF group $G$ is normal if and only if $K\leqslant N$ or $N_K = N$.
	\end{lemma}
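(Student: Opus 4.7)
The plan is to unpack the semidirect product structure $G = N \rtimes T$ and reduce the question to a direct computation using the defining Frobenius condition that $1 - t \in (\mathbb{Z}/n)^\times$ for every $1 \neq t \in T$. Writing elements of $G$ as pairs $(n, t)$ with $n \in N$ and $t \in T$, I will first record the conjugation formula
\[
(n_1, t_1)(n_2, t_2)(n_1, t_1)^{-1} = \bigl((1 - t_2)n_1 + t_1 n_2,\; t_2\bigr),
\]
using that $T$ is abelian so that the $T$-component is unchanged. This formula does all the work in both directions.

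For the ``if'' direction, I would split into the two cases of the disjunction. If $K \leqslant N$, then since $N$ is cyclic, $K$ is the unique subgroup of $N$ of its order and hence characteristic in $N$; since $N \trianglelefteqslant G$, this gives $K \trianglelefteqslant G$. If instead $N_K = N$, then $N \leqslant K$, so writing $S \leqslant T$ for the image of $K$ under the quotient $G \to G/N \cong T$ we get $K = NS$. Normality of $K$ then follows by applying the conjugation formula: conjugation by an element of $T$ preserves $S$ (as $T$ is abelian) and permutes $N$, while conjugation by an element of $N$ sends $(0, s) \in S$ to $((1 - s)n_1, s) \in NS$. Combined with $N \trianglelefteqslant G$, this shows $gKg^{-1} \subseteq NS = K$ for all generators $g$.

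For the ``only if'' direction, suppose $K \trianglelefteqslant G$ and $K \not\leqslant N$; I must show $N \leqslant K$. Choose $(n_2, t_2) \in K$ with $t_2 \neq 1$. By normality, the conjugation formula (with $t_1 = 1$) gives $((1 - t_2)n_1 + n_2,\; t_2) \in K$ for every $n_1 \in N$. Here is where the Frobenius hypothesis is essential: $1 - t_2 \in (\mathbb{Z}/n)^\times$, so as $n_1$ ranges over $N$, the element $(1 - t_2)n_1$ also ranges over all of $N$. Hence $(m + n_2, t_2) \in K$ for every $m \in N$. Multiplying by $(n_2, t_2)^{-1} \in K$ on the right yields $(m, 1) \in K$ for every $m \in N$, so $N \leqslant K$ and $N_K = N$.

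The only subtle point is the forward direction, and the main obstacle, if any, is simply being careful with the semidirect product arithmetic; the Frobenius condition makes the step from ``$K$ contains one element over $t_2 \neq 1$'' to ``$K$ contains all of $N$'' completely elementary. No induction or structural classification of subgroups (e.g.~\autoref{lem:mcfsub}) is needed for the proof itself.
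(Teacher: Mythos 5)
Your proposal is correct and takes essentially the same approach as the paper: both handle the ``if'' direction by noting that subgroups of the cyclic group $N$ are characteristic and that a subgroup with $N_K=N$ is the preimage of a subgroup of the cyclic quotient $G/N\cong T$, and both prove the converse by exploiting that the Frobenius condition makes $1-t_2$ (the paper's $m-1$) a unit. The only cosmetic difference is that you compute directly in semidirect-product coordinates and conclude $N\leqslant K$, whereas the paper runs the same commutator computation abstractly and derives a contradiction from an element $h\in N\setminus K$.
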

	\begin{proof}
		If $N_K = N$ then $K = KN$ is the preimage of $KN/N\leqslant G/N$, and since $G/N\cong T$ is cyclic this implies $KN/N\trianglelefteqslant G/N$ and hence $K\trianglelefteqslant G$. If $K\leqslant N$ then $K$ is characteristic in $N\trianglelefteqslant G$ and hence again $K\trianglelefteqslant G$.
		
		Conversely suppose $N_K\neq N$ and $K\not\leqslant N$. Let $g\in K\setminus N$ and $h\in N\setminus K$. Since $N$ is cyclic and $\operatorname{ord}(h) = \operatorname{ord}(ghg^{-1})$, we can write $ghg^{-1} = h^m$, and since $G$ is Frobenius and $gN\neq eN\in G/N$, $m-1$ must be a unit mod $\operatorname{ord}(h)$. But if $K$ is normal then $h^{m-1} = h^{-1}h^m = (h^{-1}gh)g^{-1}\in K$ and hence $h\in K$, a contradiction.
	\end{proof}
	
	\begin{lemma}\label{lem:mcflossless}
		Any mcF group $G$ is in particular a solvable T-group, and hence lossless by \autoref{prop:tgroup}. Furthermore, any two subgroups with the same order are conjugate in $G$.
	\end{lemma}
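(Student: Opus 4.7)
The plan is to leverage the explicit structure $G\cong N\rtimes T$ together with the Frobenius condition $x-1\in(\mathbb{Z}/n)^\times$ for all $1\neq x\in T$. Writing elements of $G$ as pairs $(a,x)$, a direct computation gives the conjugation formula $(m,1)(a,x)(m,1)^{-1}=(a+(1-x)m,\,x)$; invertibility of $(1-x)$ for $x\neq 1$ will drive both claims. As a preliminary, I will also use the standard Frobenius-group fact that $\gcd(|N|,|T|)=1$, which follows immediately from the observation that the Frobenius condition makes $T$ act freely on $N\setminus\{0\}$, so $|T|\mid|N|-1$.

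Solvability of $G$ is immediate since $G$ is an extension of the cyclic group $T$ by the cyclic group $N$. For the T-group property, I plan to show directly that $K\trianglelefteqslant H\trianglelefteqslant G$ implies $K\trianglelefteqslant G$, detecting normality via \autoref{lem:mcfnormal}. Applying that lemma to $H$ gives two cases. If $H\leqslant N$, then $K\leqslant N$ is characteristic in the cyclic group $N$ and hence normal in $G$. If instead $N\leqslant H$ and $K\leqslant N$, the same reasoning applies; otherwise $K$ contains some $(a,x)$ with $x\neq 1$, and conjugating by an arbitrary $(m,1)\in N\leqslant H$ (which normalizes $K$) and comparing with $(a,x)^{-1}$ produces $((1-x)m,\,1)\in K$. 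Since $(1-x)$ is a unit and $m$ ranges over all of $N$, we conclude $N\leqslant K$, i.e. $N_K=N$, and \autoref{lem:mcfnormal} again delivers $K\trianglelefteqslant G$.

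For the second statement, I will use \autoref{lem:mcfsub} to write any $K\leqslant G$ as $K=N_K\rtimes T_K$ with $T_K$ cyclic and $T_K\cap N=\{e\}$. A generator of $T_K$ has the form $(a,x)$ with $x\neq 1$, and choosing $m=(x-1)^{-1}a$ in the conjugation formula sends $(a,x)$ to $(0,x)\in T$; since $N_K$ is characteristic in $N$, the same conjugation preserves $N_K$. Hence after replacing $K$ by a conjugate we may assume $N_K\leqslant N$ and $T_K\leqslant T$. Because $|N_K|\mid |N|$, $|T_K|\mid |T|$, and $\gcd(|N|,|T|)=1$, the order $|K|$ uniquely determines the pair $(|N_K|,|T_K|)$, and cyclicity of $N$ and $T$ forces $N_K$ and $T_K$ to be the unique subgroups of these orders. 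Two subgroups of $G$ of the same order thus conjugate to a common subgroup, and so are conjugate in $G$.

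The technical heart of both parts is the conjugation identity combined with the invertibility of $1-x$: it is what lets us both straighten any cyclic subgroup of trivial base into $T$, and simultaneously force any subgroup normalized by $N$ that meets $G\setminus N$ to swallow all of $N$. Once these ingredients are recorded, both claims reduce to the easy subgroup structure of cyclic groups.
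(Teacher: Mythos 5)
Your proof is correct, and it diverges from the paper's in a meaningful way on the conjugacy statement. For the first assertion the two arguments are close: solvability is immediate from the cyclic-by-cyclic structure, and both detect normality of a subnormal subgroup via \autoref{lem:mcfnormal}; where the paper simply cites \autoref{lem:mcfsub} and \autoref{lem:mcfnormal}, you replace the appeal to \autoref{lem:mcfsub} by the explicit observation that a subgroup normalized by $N$ and containing some $(a,x)$ with $x\neq 1$ must contain $((1-x)m,1)$ for every $m$, hence all of $N$ --- a clean, self-contained substitute. The real difference is in the second claim. The paper argues via Hall's theorem: $N$ is a normal Hall subgroup, so equal orders force equal bases $N_K=N_{K'}$, and the complementary Hall subgroups of $K$ and $K'$ become conjugate after extending them to Hall subgroups of $G$. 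You instead work in explicit coordinates, using the identity $(m,1)(a,x)(m,1)^{-1}=(a+(1-x)m,x)$ and the invertibility of $x-1$ to straighten the cyclic complement of any subgroup into the fixed copy of $T$, and then conclude from $\gcd(|N|,|T|)=1$ (which you correctly derive from freeness of the $T$-action on $N\smallsetminus\{0\}$) together with uniqueness of subgroups of given order in the cyclic groups $N$ and $T$. Your route is more elementary and avoids Hall's theorem entirely, at the cost of being tied to the presentation $G\cong \Z/n\rtimes T$; the paper's Hall-theoretic argument is shorter given the machinery and is the pattern that recurs in \autoref{prop:intersection}. Both are complete.
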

	\begin{proof}
		Solvability follows from the fact that $G = N\rtimes T$ and $N,T$ are cyclic by definition. The fact that $G$ is a T-group follows directly from \autoref{lem:mcfsub} and \autoref{lem:mcfnormal}.
		
		Now suppose $K,K'\leqslant G$ and $|K| = |K'|$. Since $N$ is a normal Hall subgroup of $G$, we also have $|N_K| = |N_{K'}|$ and hence $N_K = N_{K'}$ since $N$ is cyclic. Let $S,S'$ be complementary Hall subgroups of $K$ and $K'$, respectively. Then we can extend $S,S'$ to Hall subgroups $T,T'$ of $G$, and by Hall's theorem we can find $g\in G$ such that ${}^gT = T'$. Then ${}^gS,S'\leqslant T'$ are subgroups of the cyclic group $T'$ and $|{}^gS| = |S'|$, so ${}^gS = S'$. Thus ${}^gK = {}^g(N_KS) = N_KS' = K'$.
	\end{proof}
	
	The main property that makes the lifting conditions for mcF groups simple comes from the following proposition.
	
	\begin{prop}\label{prop:intersection}
		Let $G$ be an mcF group, and let $K\leqslant G$. For all $g\in G\setminus N_G(K)$, we have $K\cap {}^gK = N_K$.
	\end{prop}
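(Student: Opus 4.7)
The plan is first to dispose of the trivial cases. By \autoref{lem:mcfnormal}, whenever $K \leqslant N$ or $N_K = N$ the subgroup $K$ is normal in $G$, so $N_G(K) = G$ and the statement is vacuous. Otherwise, by \autoref{lem:mcfsub} the subgroup $K$ is itself an mcF group with kernel $N_K$, so I write $K = N_K \rtimes S$ for some cyclic Frobenius complement $S$ of $K$. Since $S \cap N = S \cap K \cap N = S \cap N_K = \{e\}$, every nontrivial element of $S$ lies in a unique conjugate of $T$ by the Frobenius decomposition of $G$; as $S$ is cyclic, all of $S$ lies in a single conjugate of $T$. Because the statement is invariant under replacing $K$ by a $G$-conjugate, I may assume $S \leqslant T$.

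Next I would compute $N_G(K)$ explicitly. For $g = at$ with $a \in N$, $t \in T$, one has ${}^g K = N_K \cdot {}^a S$, since $N_K$ is $G$-normal and $T$ is abelian (so ${}^t S = S$). Writing $N$ additively and letting $\alpha\colon T \to (\mathbb{Z}/n)^\times$ denote the action on $N$, the identity $a s a^{-1} = (1 - \alpha(s))(a) \cdot s$ together with the Frobenius condition that $1 - \alpha(s) \in (\mathbb{Z}/n)^\times$ for every $s \neq e$ in $T$ shows that ${}^a S \leqslant K$ if and only if $a \in N_K$. Hence $N_G(K) = N_K \cdot T$.

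For the main argument, take $g = at \notin N_G(K)$, so $a \notin N_K$, and let $x \in K \cap {}^g K$; I aim to show $x \in N_K$. Write $x = \alpha s$ with $\alpha \in N_K$, $s \in S$, and simultaneously $x = \beta \cdot (a s_1 a^{-1})$ with $\beta \in N_K$, $s_1 \in S$. Equating and separating into $N$- and $T$-parts yields
\[ \beta^{-1}\alpha = \bigl(a s_1 a^{-1} s_1^{-1}\bigr) \cdot \bigl(s_1 s^{-1}\bigr), \]
where the first factor lies in $N$ and the second in $T$. Since $N \cap T = \{e\}$, this forces $s_1 = s$, and then $\beta^{-1}\alpha = (1 - \alpha(s))(a)$ lies in $N_K$. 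If $s \neq e$, then $1 - \alpha(s)$ is a unit in $\mathbb{Z}/n$ that stabilizes the subgroup $N_K$ of the cyclic group $N$, which forces $a \in N_K$ --- a contradiction. Hence $s = e$ and $x = \alpha \in N_K$, as desired.

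The main obstacle is the Frobenius-complement bookkeeping in the final step: one must carefully isolate the $N$- and $T$-parts of $a s_1 a^{-1} s^{-1}$ to conclude $s_1 = s$, and then leverage the defining ``$1 - s$ is a unit'' condition of an mcF group to propagate into $a \in N_K$. Once this manipulation is in place, the semidirect product decomposition $K = N_K \rtimes S$ and the $T$-stability of $N_K$ deliver the conclusion immediately.
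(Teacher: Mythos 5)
Your proof is correct, but it takes a genuinely different route from the paper's. The paper argues by contraposition: given $x\in (K\cap{}^{g^{-1}}K)\setminus N_K$, it replaces $x$ by a power whose order is coprime to $|N_K|$, places $x$ and $gxg^{-1}$ in Hall subgroups of $K$ complementary to $N_K$, extends these to Hall complements of $N$ in $G$, and then invokes Hall's conjugacy theorem together with Feit's result that distinct conjugates of such a complement intersect trivially to deduce ${}^gS = T$ and hence $g\in N_G(K)$. You instead work in explicit semidirect-product coordinates: after using conjugation-invariance of the statement to arrange the cyclic complement $S$ of $N_K$ in $K$ to lie inside $T$, you compute ${}^{at}K = N_K\cdot{}^aS$ and $N_G(K) = N_KT$ from the identity $asa^{-1} = (1-\alpha(s))(a)\cdot s$, and then the uniqueness of the $N\rtimes T$ decomposition together with the defining condition $1-\alpha(s)\in(\Z/n)^{\times}$ forces any element of $K\cap{}^gK$ outside $N_K$ to yield $a\in N_K$, contradicting $g\notin N_G(K)$. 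Your argument is more elementary and self-contained (no appeal to Hall's theorem or to Feit), at the cost of being tied to the explicit presentation $G\cong\Z/n\rtimes T$; the paper's Hall-subgroup argument matches the technique already used in \autoref{lem:mcflossless}. Two small points to tidy up: you only establish the inclusion $K\cap{}^gK\subseteq N_K$, so you should note the reverse inclusion, which is immediate because $N_K$ is characteristic in the cyclic normal subgroup $N$ and hence normal in $G$, giving $N_K = {}^gN_K\leqslant{}^gK$; and when $N_K=\{e\}$ the subgroup $K$ is merely cyclic rather than mcF (per \autoref{lem:mcfsub}), though your decomposition $K = N_K\rtimes S$ with $S=K$ still goes through verbatim in that case.
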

	\begin{proof}
		Since $N_K$ is a normal subgroup of $K$, we have $N_K\leqslant K\cap {}^gK$ for all $g\in G$. To complete the proof, we show that the existence of $x\in (K\cap {}^{g^{-1}}K)\setminus N_K$ implies that $g\in N_G(K)$. Fix such an $x$. Then $\operatorname{ord}(x)$ cannot divide $|N_K|$, so after replacing $x$ with some power we can assume $\operatorname{ord}(x)$ is coprime to $|N_K|$. Let $S\leqslant K$ be a Hall subgroup complementary to $N_K$ such that $x\in S$, and let $T\leqslant K$ be a Hall subgroup complementary to $N_K$ such that $gxg^{-1}\in T$. Let $S'\geqslant S$ and $T'\geqslant T$ be Hall subgroups of $G$ complementary to $N$. Then by Hall's theorem we can find $h\in G$ such that ${}^{hg}S' = T'$, and since $e\neq gxg^{-1}\in {}^gS'\cap T'$ this implies ${}^{hg}S'\cap {}^gS'\neq\{e\}$. By \cite{feit}, this implies we must have $T' = {}^{hg}S' = {}^gS'$. Since $T'$ is cyclic it has a unique subgroup of order $|T| = |{}^gS|$, so this then also implies $T = {}^gS$. But then $g\in N_G(K)$ as
		\[{}^gK = {}^g(N_KS) = N_KT = K.\]
	\end{proof}
	
	\begin{corollary}\label{cor:mcflifting}
		Let $G$ be an mcF group, and $\RR$ a categorical transfer system on $\Sub(G)/G$. Then $\RR$ is liftable if and only if whenever $[K]\,\RR\,[H]$ with $N_K\neq N_H$, we have $[N_K]\,\RR\,[H]$ (or equivalently $[N_K]\,\RR\,[K]$).
	\end{corollary}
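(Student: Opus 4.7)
The plan is to combine \autoref{lem:lifting} with \autoref{prop:intersection}: the former characterizes liftability in terms of the requirement that $[K\cap K']\,\RR\,[H]$ whenever $K,K'\leqslant H$ are conjugate, while the latter pins down this intersection as exactly $N_K$ whenever $K\neq K'$. Together these collapse the abstract liftability criterion to a statement purely about the base $N_K$. A fact I would use repeatedly is that $N_K = K\cap N$ is characteristic in the cyclic subgroup $N$ and hence normal in $G$, so $N_{{}^gK} = {}^gN_K = N_K$ for every $g\in G$.

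For the forward direction, assume $\RR$ is liftable and $[K]\,\RR\,[H]$ with $N_K\neq N_H$; after conjugating we may take $K\leqslant H$. If $K\leqslant N$ then $K = N_K$ and the conclusion is automatic, so assume $K\not\leqslant N$, whence $H\not\leqslant N$. Since $N_K\leqslant N_H$ and $N_K\neq N_H$ we have $N_H\neq\{e\}$, so $H$ is itself an mcF group. Applying \autoref{lem:mcfnormal} inside $H$, the hypotheses $K\not\leqslant N_H$ and $N_K\neq N_H$ show $K$ is not normal in $H$, so there exists $h\in H$ with $K' := {}^hK\neq K$. Then \autoref{lem:lifting} applied to this $K'\leqslant H$, together with $K\cap K' = N_K$ from \autoref{prop:intersection}, produces $[N_K]\,\RR\,[H]$ as desired.

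For the converse, assume the base condition and let $K'\leqslant H$ with $[K'] = [K]$ and $K\leqslant H$; if $K = K'$ the conclusion is immediate, so suppose $K\neq K'$ and apply \autoref{prop:intersection} to get $K\cap K' = N_K$. When $N_K\neq N_H$ the hypothesis applies directly to give $[N_K]\,\RR\,[H]$. The main obstacle, and the subtlest step of the argument, will be ruling out the remaining subcase $N_K = N_H$ with $K\neq K'$; here I would argue structurally that subgroups of the mcF group $H$ with base $N_H$ biject with subgroups of the cyclic quotient $H/N_H$ and are therefore determined by their order. Since $N_{K'} = N_K = N_H$ (using normality of $N_K$ in $G$) and $|K| = |K'|$, this forces $K = K'$, contradicting our assumption, so by \autoref{lem:lifting} the transfer system $\RR$ is liftable. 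Finally, the parenthetical equivalence $[N_K]\,\RR\,[H]\iff[N_K]\,\RR\,[K]$ (given $[K]\,\RR\,[H]$) follows from transitivity one way and from \autoref{defn:cattran} applied with the maximal element $[N_K]\in[N_K]^\downarrow\cap[K]^\downarrow$ the other way.
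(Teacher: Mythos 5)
Your proof is correct and follows essentially the same route as the paper: it combines \autoref{lem:lifting} with \autoref{prop:intersection}, using \autoref{lem:mcfnormal} to produce a non-normalizing conjugate of $K$ inside $H$ for the forward direction. You are in fact slightly more thorough than the paper's own proof, which leaves implicit the observation that $N_K = N_H$ forces $K = K'$ (so the base condition never needs to be invoked in that subcase) and does not spell out the parenthetical equivalence $[N_K]\,\RR\,[H]\iff[N_K]\,\RR\,[K]$.
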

	\begin{proof}
		By \autoref{lem:lifting}, for the if direction we need to check for all $K,{}^gK\leqslant H$ that either $K = {}^gK$ or $K\cap {}^gK = N_K$, but this follows directly from \autoref{prop:intersection}. For the only if direction we need to check that $N_K\neq N_H$ implies there exists some $g\in G$ with ${}^gK\leqslant H$ such that $N_K = K\cap {}^gK$. But $N_K\neq N_H$ implies by \autoref{lem:mcfnormal} that $K$ is not normal in $H$, and hence by \autoref{prop:intersection} we can find some $g\in H$ such that $K\cap {}^gK = N_K$, and of course ${}^gK\leqslant {}^gH = H$.
	\end{proof}
	
	\begin{remark}\label{rem:subgrplattice}
	We can make this result more visually intuitive as follows. By \autoref{lem:mcflossless}, the map $[K] \mapsto (|N_K|,[K:N_K])$ is a poset isomorphism $\Sub(G)/G\cong D_N\times D_T$, where $D_N$ is the lattice of divisors of $|N|$ and $D_T$ is the lattice of divisors of $|G/N| = |T|$. Then \autoref{cor:mcflifting} says that a categorical transfer system $\RR$ on $D_N\times D_T$ is liftable if and only if whenever $(i,j)\, \RR \, (i',j')$ with $j'>j$, we must have $(i,1)\, \RR \,(i,j)$.
	\end{remark}
	
	\section{Examples of liftable transfer systems}\label{sec:examples}
	
	We will now apply the theory presented in this paper to two classes of mcF groups.
	
	\subsection{Dihedral groups of prime power order}
	
	We begin by considering groups of the form $D_{p^k} \cong \mathbb{Z}/p^k \rtimes \mathbb{Z}/2$ where $p$ is an odd prime; these are mcF groups by \aref{ex:dihedralmcf}. From \aref{rem:subgrplattice}, it follows that $\Sub(G)/G \cong [k] \times [1]$ and we shall consider elements of this lattice as pairs $(i,j)$ where $i \in [k]$ and $j \in [1]$. Here $[n]$ is the totally ordered finite set $\{0<1<\cdots n\}$.  It will be useful for us to display this lattice as the horizontal ladder as in \autoref{lad}.
	
	\begin{figure}[h]
		\centering
		\begin{tikzpicture}[scale=1.25]
		\node[circle,draw=black, fill=black, inner sep=0pt, minimum size=5pt] (00) at (0,0) {};
		\node[circle,draw=black, fill=black, inner sep=0pt, minimum size=5pt] (10) at (1,0) {};
		\node[circle,draw=none, fill=none, inner sep=0pt, minimum size=5pt] (20) at (2,0) {$\cdots$};
		\node[circle,draw=black, fill=black, inner sep=0pt, minimum size=5pt] (30) at (3,0) {};
		\node[circle,draw=black, fill=black, inner sep=0pt, minimum size=5pt] (40) at (4,0) {};
		\node[circle,draw=black, fill=black, inner sep=0pt, minimum size=5pt] (01) at (0,1) {};
		\node[circle,draw=black, fill=black, inner sep=0pt, minimum size=5pt] (11) at (1,1) {};
		\node[circle,draw=none, fill=none, inner sep=0pt, minimum size=5pt] (21) at (2,1) {$\cdots$};
		\node[circle,draw=black, fill=black, inner sep=0pt, minimum size=5pt] (31) at (3,1) {};
		\node[circle,draw=black, fill=black, inner sep=0pt, minimum size=5pt] (41) at (4,1) {};
		\draw[thin,black!30,->] (00) -- (10);
		\draw[thin,black!30,->] (10) -- (20);
		\draw[thin,black!30,->] (20) -- (30);
		\draw[thin,black!30,->] (30) -- (40);
		\draw[thin,black!30,->] (01) -- (11);
		\draw[thin,black!30,->] (11) -- (21);
		\draw[thin,black!30,->] (21) -- (31);
		\draw[thin,black!30,->] (31) -- (41);
		\draw[thin,black!30,->] (00) -- (01);
		\draw[thin,black!30,->] (10) -- (11);
		\draw[thin,black!30,->] (30) -- (31);
		\draw[thin,black!30,->] (40) -- (41);
		\draw[decoration={brace,mirror,raise=5pt},decorate]   (0,0) -- node[below=6pt] {$k+1$} (4,0);	
		\end{tikzpicture}
		\caption{The lattice $[k] \times [1]$.}\label{lad}
	\end{figure}
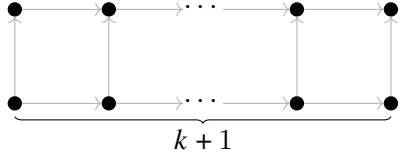
	
	\autoref{lad2} shows the corresponding subgroups (where we have used square brackets to denote conjugacy classes where required).
	
	\begin{figure}[h]
		\centering
		\begin{tikzpicture}[scale=1.25]
		\node[draw=none, fill=none, inner sep=0pt, minimum size=5pt] (00) at (0,0) {$e$};
		\node[draw=none, fill=none, inner sep=0pt, minimum size=5pt] (10) at (1,0) {$C_p$};
		\node[draw=none, fill=none, inner sep=0pt, minimum size=5pt] (20) at (2,0) {$\cdots$};
		\node[draw=none, fill=none, inner sep=0pt, minimum size=5pt] (30) at (3,0) {$C_{p^{k-1}}$};
		\node[draw=none, fill=none, inner sep=0pt, minimum size=5pt] (40) at (4,0) {$C_{p^k}$};
		\node[draw=none, fill=none, inner sep=0pt, minimum size=5pt] (01) at (0,1) {$[D_1]$};
		\node[draw=none, fill=none, inner sep=0pt, minimum size=5pt] (11) at (1,1) {$[D_p]$};
		\node[draw=none, fill=none, inner sep=0pt, minimum size=5pt] (21) at (2,1) {$\cdots$};
		\node[draw=none, fill=none, inner sep=0pt, minimum size=5pt] (31) at (3,1) {$[D_{p^{k-1}}]$};
		\node[draw=none, fill=none, inner sep=0pt, minimum size=5pt] (41) at (4,1) {$D_{p^k}$};
		\draw[thin,black!30,->] (00) -- (10);
		\draw[thin,black!30,->] (10) -- (20);
		\draw[thin,black!30,->] (20) -- (30);
		\draw[thin,black!30,->] (30) -- (40);
		\draw[thin,black!30,->] (01) -- (11);
		\draw[thin,black!30,->] (11) -- (21);
		\draw[thin,black!30,->] (21) -- (31);
		\draw[thin,black!30,->] (31) -- (41);
		\draw[thin,black!30,->] (00) -- (01);
		\draw[thin,black!30,->] (10) -- (11);
		\draw[thin,black!30,->] (30) -- (31);
		\draw[thin,black!30,->] (40) -- (41);
		\end{tikzpicture}
		\caption{The subgroups in the lattice $\Sub(D_{p^k})/D_{p^k}$.}\label{lad2}
	\end{figure}
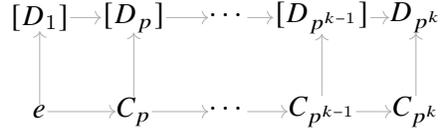

	We can now unravel \autoref{cor:mcflifting} in this specific example. We need to consider situations where we have $[K] \, \RR \, [H]$ with $N_K \neq N_H$. This occurs when we move horizontally on the top row of \autoref{lad}. In this case we require $[N_K] \, \RR \, [K]$ for it to lift to a transfer system for $D_{p^k}$ itself. All in all we conclude that the conditions of \autoref{cor:mcflifting} correspond to the following in terms of the group:
	
	\vspace{2mm}
	
	\begin{itemize}
		\item If $[D_{p^i}] \, \RR\,  [D_{p^j}]$ then $C_{p^i} \, \RR \, [D_{p^i}]$ for all $0 \leqslant i < j \leqslant k$.
	\end{itemize}
	
	\vspace{2mm}
	
	The following corollary rewords this condition in terms of the categorical transfer systems on $[k] \times [1]$ using \autoref{rem:subgrplattice}.
	
	\begin{corollary}\label{cor:dpk}
		A $D_{p^k}$-transfer system is equivalent to the data of a categorical transfer system on $[k] \times [1]$ which satisfies the following rule:
		\vspace{2mm}
	
		\begin{itemize}
			\item[$\star$] If $(i,1) \, \RR \, (i',1)$ for $i < i'$ then $(i,0) \, \RR \, (i,1)$ for all $0 \leqslant i < i' \leqslant k$.
		\end{itemize}
	\end{corollary}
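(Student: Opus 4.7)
\bigskip

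The plan is to simply unravel what the previous machinery says in the special case $G = D_{p^k}$. The corollary is essentially the concatenation of three prior results: $D_{p^k}$ is a mcF group by \autoref{ex:dihedralmcf}, hence lossless by \autoref{lem:mcflossless}; so by \autoref{cor:losslesssummary} the $D_{p^k}$-transfer systems are in bijection with liftable categorical transfer systems on $\Sub(D_{p^k})/D_{p^k}$; and by \autoref{cor:mcflifting} liftability is governed by the explicit condition on bases. The content of the corollary is therefore to translate \autoref{cor:mcflifting} through the poset isomorphism of \autoref{rem:subgrplattice}.

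First I would describe that isomorphism concretely. For $G = D_{p^k}$ the kernel is $N \cong \mathbb{Z}/p^k$ and the complement is $T \cong \mathbb{Z}/2$, so $D_N = \{1,p,p^2,\dots,p^k\} = [k]$ and $D_T = \{1,2\} = [1]$. Under the isomorphism $[K]\mapsto(|N_K|,[K\colon N_K])$ of \autoref{rem:subgrplattice}, the cyclic subgroups $C_{p^i}$ correspond to $(i,0)$ (since $N_K = K$) and the dihedral subgroups $[D_{p^i}]$ correspond to $(i,1)$ (since $N_K = C_{p^i}$ and $[D_{p^i}\colon C_{p^i}] = 2$). Thus the first coordinate records $N_K$.

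Next I would apply \autoref{cor:mcflifting}. Translating through the coordinates, a categorical transfer system $\RR$ on $[k]\times[1]$ is liftable if and only if $(i,j)\,\RR\,(i',j')$ together with $i < i'$ implies $(i,0)\,\RR\,(i,j)$. (The hypothesis $N_K\neq N_H$ is exactly $i\neq i'$, and since $\RR$ refines the partial order we automatically have $i\leqslant i'$.) If $j=0$ this conclusion is automatic by reflexivity, so the only non-trivial case is $j=1$. Moreover $j=1$ forces $j'=1$: otherwise $(i,1)\leqslant (i',0)$ would require $1\leqslant 0$, contradicting that $\RR$ refines $\leqslant$. Thus the only non-trivial instance of the lifting condition is exactly the condition $(\star)$ in the statement.

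No step is really an obstacle; the only care required is to note that categorical transfer systems refine $\leqslant$, which is what rules out the case $j=1$, $j'=0$ and lets us phrase the lifting condition purely on the top row. Putting the three translations together yields the bijection between $D_{p^k}$-transfer systems and categorical transfer systems on $[k]\times[1]$ satisfying $(\star)$.
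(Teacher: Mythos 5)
Your proof is correct and follows the same route as the paper: the corollary is obtained by stringing together \autoref{ex:dihedralmcf}, \autoref{lem:mcflossless}, \autoref{cor:losslesssummary}, and \autoref{cor:mcflifting}, and then translating through the coordinate description of \autoref{rem:subgrplattice}. Your explicit check that the lifting condition is vacuous unless $j=j'=1$ (by reflexivity when $j=0$, and because $\RR$ refines $\leqslant$ rules out $j=1$, $j'=0$) is precisely the content of the paper's remark that the condition only bites for horizontal arrows on the top row of the ladder.
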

	
	\begin{example}\label{ex:dp}
		Let us consider the case when $k=1$, so that the lattice in question is $[1] \times [1]$. Then condition $(\star)$ of \autoref{cor:dpk} boils down to the single implication of \aref{imp:1x1}:
		
		\begin{figure}[h]
			\centering
			\begin{tikzpicture}[scale=1.2]
			\node[circle,draw=black, fill=black, inner sep=0pt, minimum size=5pt] (00) at (0,0) {};
			\node[circle,draw=black, fill=black, inner sep=0pt, minimum size=5pt] (10) at (1,0) {};
			\node[circle,draw=black, fill=black, inner sep=0pt, minimum size=5pt] (01) at (0,1) {};
			\node[circle,draw=black, fill=black, inner sep=0pt, minimum size=5pt] (11) at (1,1) {};
			\draw[gRed,->] (01) -- (11);
			
			\node[circle,draw=none, fill=none, inner sep=0pt, minimum size=5pt] at (2,0.5) {$\Longrightarrow$};	
			\node[circle,draw=none, fill=none, inner sep=0pt, minimum size=5pt] at (2,1) {$(\star)$};
			
			\begin{scope}[xshift=80]
			\node[circle,draw=black, fill=black, inner sep=0pt, minimum size=5pt] (00') at (0,0) {};
			\node[circle,draw=black, fill=black, inner sep=0pt, minimum size=5pt] (10') at (1,0) {};
			\node[circle,draw=black, fill=black, inner sep=0pt, minimum size=5pt] (01') at (0,1) {};
			\node[circle,draw=black, fill=black, inner sep=0pt, minimum size=5pt] (11') at (1,1) {};
			\draw[gRed,->] (01') -- (11');
			\draw[gRed,->] (00') -- (01');
			\node[circle,draw=none, fill=none, inner sep=0pt, minimum size=5pt] at (1.3,-0.1) {.};
			\end{scope}
			\end{tikzpicture}
			\caption{The implications for $[1] \times [1]$.}\label{imp:1x1}
		\end{figure}
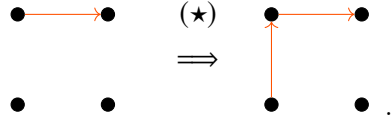
		
		
		Note that this is exactly the observation of Rubin in~\cite{rubin} that we recalled in the introduction. Of the 10 transfer systems on $[1] \times [1]$, only one of them does not satisfy condition $(\star)$, namely:
		\begin{figure}[h]
			\centering
			\begin{tikzpicture}[scale=1.2]
			\node[circle,draw=black, fill=black, inner sep=0pt, minimum size=5pt] (00) at (0,0) {};
			\node[circle,draw=black, fill=black, inner sep=0pt, minimum size=5pt] (10) at (1,0) {};
			\node[circle,draw=black, fill=black, inner sep=0pt, minimum size=5pt] (01) at (0,1) {};
			\node[circle,draw=black, fill=black, inner sep=0pt, minimum size=5pt] (11) at (1,1) {};
			\draw[gRed,->] (01) -- (11);
			\draw[gRed,->] (00) -- (10);
			\node[circle,draw=none, fill=none, inner sep=0pt, minimum size=5pt] at (1.3,-0.1) {.};
			\end{tikzpicture}
		\end{figure}
		
	\end{example}
	
	\begin{example}\label{ex:dpp}
		We now move to the more exotic case of $k=2$ with conjugacy lattice $[2] \times [1]$. One can compute that there are 68 categorical transfer systems for this lattice. This time we have three possible options for the pair $i < i'$ in condition $(\star)$ which are given in \autoref{fig:2cond}.
		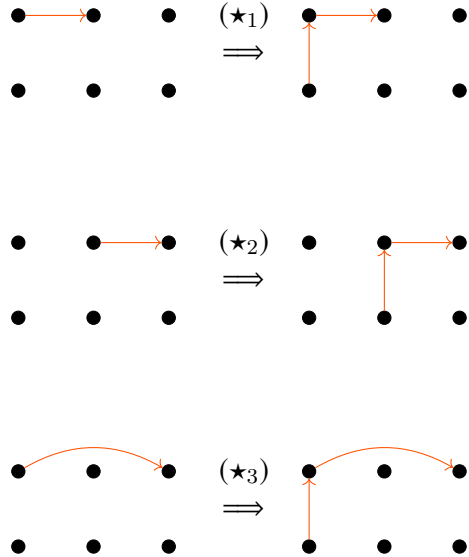
\begin{figure}[h]
			\centering
			\begin{tikzpicture}
			\node[circle,draw=black, fill=black, inner sep=0pt, minimum size=5pt] (00) at (0,0) {};
			\node[circle,draw=black, fill=black, inner sep=0pt, minimum size=5pt] (10) at (1,0) {};
			\node[circle,draw=black, fill=black, inner sep=0pt, minimum size=5pt] (01) at (0,1) {};
			\node[circle,draw=black, fill=black, inner sep=0pt, minimum size=5pt] (11) at (1,1) {};
			\node[circle,draw=black, fill=black, inner sep=0pt, minimum size=5pt] (20) at (2,0) {};
			\node[circle,draw=black, fill=black, inner sep=0pt, minimum size=5pt] (21) at (2,1) {};
			\draw[gRed,->] (01) -- (11);
			
			\node[circle,draw=none, fill=none, inner sep=0pt, minimum size=5pt] at (3,0.5) {$\Longrightarrow$};	
			\node[circle,draw=none, fill=none, inner sep=0pt, minimum size=5pt] at (3,1) {$(\star_1)$};
			
			\begin{scope}[xshift=110]
			\node[circle,draw=black, fill=black, inner sep=0pt, minimum size=5pt] (00') at (0,0) {};
			\node[circle,draw=black, fill=black, inner sep=0pt, minimum size=5pt] (10') at (1,0) {};
			\node[circle,draw=black, fill=black, inner sep=0pt, minimum size=5pt] (01') at (0,1) {};
			\node[circle,draw=black, fill=black, inner sep=0pt, minimum size=5pt] (11') at (1,1) {};
			\node[circle,draw=black, fill=black, inner sep=0pt, minimum size=5pt] (20') at (2,0) {};
			\node[circle,draw=black, fill=black, inner sep=0pt, minimum size=5pt] (21') at (2,1) {};
			\draw[gRed,->] (01') -- (11');
			\draw[gRed,->] (00') -- (01');
			\end{scope}
			\end{tikzpicture}
			
			\vspace{15mm}
			
			\begin{tikzpicture}
			\node[circle,draw=black, fill=black, inner sep=0pt, minimum size=5pt] (00) at (0,0) {};
			\node[circle,draw=black, fill=black, inner sep=0pt, minimum size=5pt] (10) at (1,0) {};
			\node[circle,draw=black, fill=black, inner sep=0pt, minimum size=5pt] (01) at (0,1) {};
			\node[circle,draw=black, fill=black, inner sep=0pt, minimum size=5pt] (11) at (1,1) {};
			\node[circle,draw=black, fill=black, inner sep=0pt, minimum size=5pt] (20) at (2,0) {};
			\node[circle,draw=black, fill=black, inner sep=0pt, minimum size=5pt] (21) at (2,1) {};
			\draw[gRed,->] (11) -- (21);
			
			\node[circle,draw=none, fill=none, inner sep=0pt, minimum size=5pt] at (3,0.5) {$\Longrightarrow$};	
			\node[circle,draw=none, fill=none, inner sep=0pt, minimum size=5pt] at (3,1) {$(\star_2)$};
			
			\begin{scope}[xshift=110]
			\node[circle,draw=black, fill=black, inner sep=0pt, minimum size=5pt] (00') at (0,0) {};
			\node[circle,draw=black, fill=black, inner sep=0pt, minimum size=5pt] (10') at (1,0) {};
			\node[circle,draw=black, fill=black, inner sep=0pt, minimum size=5pt] (01') at (0,1) {};
			\node[circle,draw=black, fill=black, inner sep=0pt, minimum size=5pt] (11') at (1,1) {};
			\node[circle,draw=black, fill=black, inner sep=0pt, minimum size=5pt] (20') at (2,0) {};
			\node[circle,draw=black, fill=black, inner sep=0pt, minimum size=5pt] (21') at (2,1) {};
			\draw[gRed,->] (11') -- (21');
			\draw[gRed,->] (10') -- (11');
			\end{scope}
			\end{tikzpicture}
			
			\vspace{15mm}
			
			\begin{tikzpicture}
			\node[circle,draw=black, fill=black, inner sep=0pt, minimum size=5pt] (00) at (0,0) {};
			\node[circle,draw=black, fill=black, inner sep=0pt, minimum size=5pt] (10) at (1,0) {};
			\node[circle,draw=black, fill=black, inner sep=0pt, minimum size=5pt] (01) at (0,1) {};
			\node[circle,draw=black, fill=black, inner sep=0pt, minimum size=5pt] (11) at (1,1) {};
			\node[circle,draw=black, fill=black, inner sep=0pt, minimum size=5pt] (20) at (2,0) {};
			\node[circle,draw=black, fill=black, inner sep=0pt, minimum size=5pt] (21) at (2,1) {};
			\draw[gRed,->] (01) to [bend left] (21);
			
			\node[circle,draw=none, fill=none, inner sep=0pt, minimum size=5pt] at (3,0.5) {$\Longrightarrow$};	
			\node[circle,draw=none, fill=none, inner sep=0pt, minimum size=5pt] at (3,1) {$(\star_3)$};
			
			\begin{scope}[xshift=110]
			\node[circle,draw=black, fill=black, inner sep=0pt, minimum size=5pt] (00') at (0,0) {};
			\node[circle,draw=black, fill=black, inner sep=0pt, minimum size=5pt] (10') at (1,0) {};
			\node[circle,draw=black, fill=black, inner sep=0pt, minimum size=5pt] (01') at (0,1) {};
			\node[circle,draw=black, fill=black, inner sep=0pt, minimum size=5pt] (11') at (1,1) {};
			\node[circle,draw=black, fill=black, inner sep=0pt, minimum size=5pt] (20') at (2,0) {};
			\node[circle,draw=black, fill=black, inner sep=0pt, minimum size=5pt] (21') at (2,1) {};
			\draw[gRed,->] (01') to [bend left] (21');
			\draw[gRed,->] (00') -- (01');
			\end{scope}
			\end{tikzpicture}
			\caption{The three restrictions needed for a transfer system on $D_{p^2}$.}\label{fig:2cond}
		\end{figure}
		
		Of the 68 transfer systems on $[2] \times [1]$, one can computationally verify that 56 of them are $D_{p^2}$-transfer systems.
	\end{example}
	
	In~\cite{bmodihedral}, the authors use these results to produce explicit recursion formul\ae{} for $D_{p^k}$-transfer systems.
	
	\newpage
	
	\subsection{Affine linear transformations of finite fields}
	
	In this section we will consider groups of the form $\mathsf{AGL}_1(\mathbb{F}_p) = \mathbb{F}_p \rtimes \mathbb{F}_p^\times$ where $p$ is prime. From \aref{rem:subgrplattice}, it follows that $\Sub(G)/G \cong [1] \times \Sub(\mathbb{Z}/(p-1))$. Of course, the prime factorization of $p-1$ follows no apparent rhyme or reason. For the convenience of the reader we list the first few values in \autoref{tab:sublat}.

	\begin{table}[h]
		\begin{tabular}{|c|c|}
			\hline
			$p$ & $\Sub(\mathbb{F}_p^\times)$ \\ \hline
			2   & $[0]$                       \\ \hline
			3   & $[1]$                       \\ \hline
			5   & $[2]$                       \\ \hline
			7   & $[1]\times [1]$             \\ \hline
			11  & $[1] \times [1]$            \\ \hline
			13  & $[2] \times [1]$            \\ \hline
			17  & $[4]$            \\ \hline
		\end{tabular}
		\vspace{5mm}
		\caption{The subgroup lattice of $\mathbb{F}_p^\times$.}\label{tab:sublat}
	\end{table}
	
	\begin{example}
		Consider $G=\mathsf{AGL}_1(\mathbb{F}_3)$. Here, the subgroup lattice  is $[1] \times [1]$. Then we are in the exact same case as \aref{ex:dp}, which is reassuring as $\mathsf{AGL}_1(\mathbb{F}_3) \cong D_{3}$. In particular there are 9 transfer systems for $G=\mathsf{AGL}_1(\mathbb{F}_3)$.
	\end{example}
		
	\begin{example}
		The first non-trivial example is $G=\mathsf{AGL}_1(\mathbb{F}_5)$, whose subgroup lattice is $[1] \times [2]$. We warn the reader that this case is not the same as \autoref{ex:dpp}. Indeed, even though $[1] \times [2] \cong [2] \times [1]$, the condition $(\star)$ is not invariant under this.  The lattice $\Sub(G)/G$ is depicted in \autoref{fig:agl1f5}.
		
		\vspace{5mm}
		
		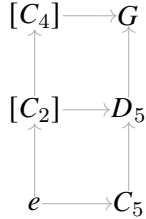
\begin{figure}[h]
			\centering
			\begin{tikzpicture}[scale=1.25]
			\node[draw=none, fill=none, inner sep=0pt, minimum size=5pt] (00) at (0,0) {$e$};
			\node[draw=none, fill=none, inner sep=0pt, minimum size=5pt] (10) at (1,0) {$C_5$};
			\node[draw=none, fill=none, inner sep=0pt, minimum size=5pt] (01) at (0,1) {$[C_2]$};
			\node[draw=none, fill=none, inner sep=0pt, minimum size=5pt] (11) at (1,1) {$D_5$};
			\node[draw=none, fill=none, inner sep=0pt, minimum size=5pt] (02) at (0,2) {$[C_4]$};
			\node[draw=none, fill=none, inner sep=0pt, minimum size=5pt] (12) at (1,2) {$G$};
			\draw[thin,black!30,->] (00) -- (10);
			\draw[thin,black!30,->] (01) -- (11);
			\draw[thin,black!30,->] (02) -- (12);
			\draw[thin,black!30,->] (00) -- (01);
			\draw[thin,black!30,->] (01) -- (02);
			\draw[thin,black!30,->] (10) -- (11);
			\draw[thin,black!30,->] (11) -- (12);
			\end{tikzpicture}
			\caption{The subgroups in the lattice $\Sub(\mathsf{AGL}_1(\mathbb{F}_5))/\mathsf{AGL}_1(\mathbb{F}_5)$.}\label{fig:agl1f5}
		\end{figure}
\vspace{5mm}

		Applying \autoref{cor:mcflifting} in conjunction with \aref{rem:subgrplattice} we obtain our lifting conditions that we collect in \aref{imp:2x1}.
		
		\newpage

		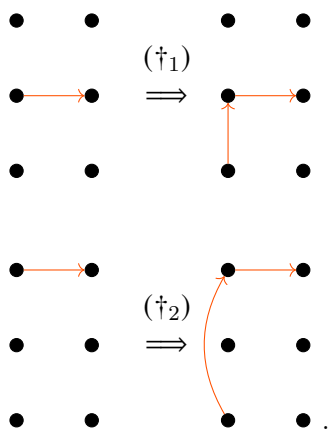
\begin{figure}[h]
			\centering
			\begin{tikzpicture}
			\node[circle,draw=black, fill=black, inner sep=0pt, minimum size=5pt] (00) at (0,0) {};
			\node[circle,draw=black, fill=black, inner sep=0pt, minimum size=5pt] (10) at (1,0) {};
			\node[circle,draw=black, fill=black, inner sep=0pt, minimum size=5pt] (01) at (0,1) {};
			\node[circle,draw=black, fill=black, inner sep=0pt, minimum size=5pt] (11) at (1,1) {};
			\node[circle,draw=black, fill=black, inner sep=0pt, minimum size=5pt] (02) at (0,2) {};
			\node[circle,draw=black, fill=black, inner sep=0pt, minimum size=5pt] (12) at (1,2) {};
			\draw[gRed,->] (01) -- (11);
			
			\node[circle,draw=none, fill=none, inner sep=0pt, minimum size=5pt] at (2,1) {$\Longrightarrow$};	
			\node[circle,draw=none, fill=none, inner sep=0pt, minimum size=5pt] at (2,1.5) {$(\dag_1)$};
			
			\begin{scope}[xshift=80]
			\node[circle,draw=black, fill=black, inner sep=0pt, minimum size=5pt] (00') at (0,0) {};
			\node[circle,draw=black, fill=black, inner sep=0pt, minimum size=5pt] (10') at (1,0) {};
			\node[circle,draw=black, fill=black, inner sep=0pt, minimum size=5pt] (01') at (0,1) {};
			\node[circle,draw=black, fill=black, inner sep=0pt, minimum size=5pt] (11') at (1,1) {};
			\node[circle,draw=black, fill=black, inner sep=0pt, minimum size=5pt] (02') at (0,2) {};
			\node[circle,draw=black, fill=black, inner sep=0pt, minimum size=5pt] (12') at (1,2) {};
			\draw[gRed,->] (01') -- (11');
			\draw[gRed,->] (00') -- (01');
			\node[circle,draw=none, fill=none, inner sep=0pt, minimum size=5pt] at (1.3,-0.1) {\textcolor{white}{.}};
			\end{scope}
			\end{tikzpicture}
			
			\vspace{10mm}
			
			\begin{tikzpicture}
			\node[circle,draw=black, fill=black, inner sep=0pt, minimum size=5pt] (00) at (0,0) {};
			\node[circle,draw=black, fill=black, inner sep=0pt, minimum size=5pt] (10) at (1,0) {};
			\node[circle,draw=black, fill=black, inner sep=0pt, minimum size=5pt] (01) at (0,1) {};
			\node[circle,draw=black, fill=black, inner sep=0pt, minimum size=5pt] (11) at (1,1) {};
			\node[circle,draw=black, fill=black, inner sep=0pt, minimum size=5pt] (02) at (0,2) {};
			\node[circle,draw=black, fill=black, inner sep=0pt, minimum size=5pt] (12) at (1,2) {};
			\draw[gRed,->] (02) -- (12);
			
			\node[circle,draw=none, fill=none, inner sep=0pt, minimum size=5pt] at (2,1) {$\Longrightarrow$};	
			\node[circle,draw=none, fill=none, inner sep=0pt, minimum size=5pt] at (2,1.5) {$(\dag_2)$};
			
			\begin{scope}[xshift=80]
			\node[circle,draw=black, fill=black, inner sep=0pt, minimum size=5pt] (00') at (0,0) {};
			\node[circle,draw=black, fill=black, inner sep=0pt, minimum size=5pt] (10') at (1,0) {};
			\node[circle,draw=black, fill=black, inner sep=0pt, minimum size=5pt] (01') at (0,1) {};
			\node[circle,draw=black, fill=black, inner sep=0pt, minimum size=5pt] (11') at (1,1) {};
			\node[circle,draw=black, fill=black, inner sep=0pt, minimum size=5pt] (02') at (0,2) {};
			\node[circle,draw=black, fill=black, inner sep=0pt, minimum size=5pt] (12') at (1,2) {};
			\draw[gRed,->] (02') -- (12');
			\draw[gRed,->] (00') to [bend left] (02');
			\node[circle,draw=none, fill=none, inner sep=0pt, minimum size=5pt] at (1.3,-0.1) {.};
			\end{scope}
			\end{tikzpicture}
			\caption{The implications for $[1] \times [2]$.}\label{imp:2x1}
		\end{figure}
		
		Out of the 68 transfer systems on $[1] \times [2]$, 59 of them satisfy condition $(\dag_1)$ and $(\dag_2)$.

	\end{example}
	
	\begin{example}
		Our final example is $G = \mathsf{AGL}_1(\mathbb{F}_7)$. Here $\Sub(G)/G \cong [1] \times ([1] \times [1])$, displayed in \autoref{fig:cube}.
		\begin{figure}[h]
			\centering
			\begin{tikzpicture}[scale=1]
			\node[draw=none, fill=none, inner sep=0pt, minimum size=5pt] (00) at (0,0) {$e$};
			\node[draw=none, fill=none, inner sep=0pt, minimum size=5pt] (20) at (2,0) {$C_7$};
			\node[draw=none, fill=none, inner sep=0pt, minimum size=5pt] (02) at (0,2) {$[C_2]$};
			\node[draw=none, fill=none, inner sep=0pt, minimum size=5pt] (22) at (2,2) {$D_7$};
			\node[draw=none, fill=none, inner sep=0pt, minimum size=5pt] (11) at (1,1) {$[C_3]$};
			\node[draw=none, fill=none, inner sep=0pt, minimum size=5pt] (31) at (3,1) {$C_7 \rtimes C_3$};
			\node[draw=none, fill=none, inner sep=0pt, minimum size=5pt] (13) at (1,3) {$[C_6]$};
			\node[draw=none, fill=none, inner sep=0pt, minimum size=5pt] (33) at (3,3) {$G$};
			\draw[thin,black!30,->] (00) -- (20);
			\draw[thin,black!30,->] (02) -- (22);
			\draw[thin,black!30,->] (11) -- (31);
			\draw[thin,black!30,->] (13) -- (33);
			\draw[thin,black!30,->] (00) -- (02);
			\draw[thin,black!30,->] (20) -- (22);
			\draw[thin,black!30,->] (11) -- (13);
			\draw[thin,black!30,->] (31) -- (33);
			\draw[thin,black!30,->] (00) -- (11);
			\draw[thin,black!30,->] (20) -- (31);
			\draw[thin,black!30,->] (02) -- (13);
			\draw[thin,black!30,->] (22) -- (33);
			\end{tikzpicture}
			\caption{The subgroups in the lattice $\Sub(\mathsf{AGL}_1(\mathbb{F}_7))/\mathsf{AGL}_1(\mathbb{F}_7)$.}\label{fig:cube}
		\end{figure}
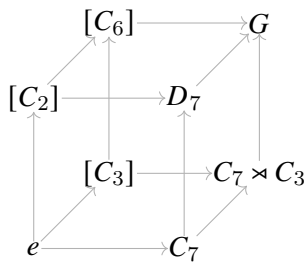
		
		As with the previous examples, we apply \autoref{cor:mcflifting} in conjunction with \aref{rem:subgrplattice} to obtain our lifting conditions as in \autoref{fig:duberel}.

		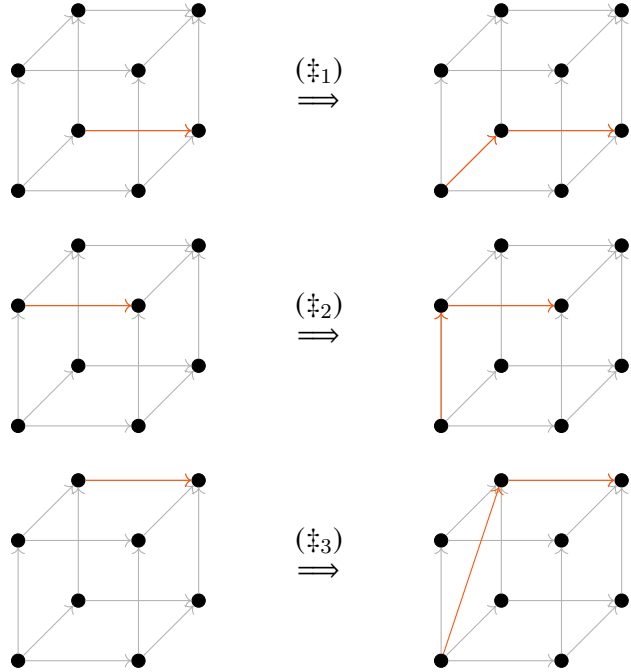
\begin{figure}[h]
			\centering
			\begin{tikzpicture}[scale=0.8]
			\node[circle,draw=black, fill=black, inner sep=0pt, minimum size=5pt] (00) at (0,0) {};
			\node[circle,draw=black, fill=black, inner sep=0pt, minimum size=5pt] (20) at (2,0) {};
			\node[circle,draw=black, fill=black, inner sep=0pt, minimum size=5pt] (02) at (0,2) {};
			\node[circle,draw=black, fill=black, inner sep=0pt, minimum size=5pt] (22) at (2,2) {};
			\node[circle,draw=black, fill=black, inner sep=0pt, minimum size=5pt] (11) at (1,1) {};
			\node[circle,draw=black, fill=black, inner sep=0pt, minimum size=5pt] (31) at (3,1) {};
			\node[circle,draw=black, fill=black, inner sep=0pt, minimum size=5pt] (13) at (1,3) {};
			\node[circle,draw=black, fill=black, inner sep=0pt, minimum size=5pt] (33) at (3,3) {};
			\draw[thin,black!30,->] (00) -- (20);
			\draw[thin,black!30,->] (02) -- (22);
			\draw[thin,black!30,->] (11) -- (31);
			\draw[thin,black!30,->] (13) -- (33);
			\draw[thin,black!30,->] (00) -- (02);
			\draw[thin,black!30,->] (20) -- (22);
			\draw[thin,black!30,->] (11) -- (13);
			\draw[thin,black!30,->] (31) -- (33);
			\draw[thin,black!30,->] (00) -- (11);
			\draw[thin,black!30,->] (20) -- (31);
			\draw[thin,black!30,->] (02) -- (13);
			\draw[thin,black!30,->] (22) -- (33);
			\draw[gRed,->] (11) -- (31);
			
			\node[circle,draw=none, fill=none, inner sep=0pt, minimum size=5pt] at (5,1.5) {$\Longrightarrow$};	
			\node[circle,draw=none, fill=none, inner sep=0pt, minimum size=5pt] at (5,2) {$(\ddag_1)$};
			
			\begin{scope}[xshift = 200]
			\node[circle,draw=black, fill=black, inner sep=0pt, minimum size=5pt] (00) at (0,0) {};
			\node[circle,draw=black, fill=black, inner sep=0pt, minimum size=5pt] (20) at (2,0) {};
			\node[circle,draw=black, fill=black, inner sep=0pt, minimum size=5pt] (02) at (0,2) {};
			\node[circle,draw=black, fill=black, inner sep=0pt, minimum size=5pt] (22) at (2,2) {};
			\node[circle,draw=black, fill=black, inner sep=0pt, minimum size=5pt] (11) at (1,1) {};
			\node[circle,draw=black, fill=black, inner sep=0pt, minimum size=5pt] (31) at (3,1) {};
			\node[circle,draw=black, fill=black, inner sep=0pt, minimum size=5pt] (13) at (1,3) {};
			\node[circle,draw=black, fill=black, inner sep=0pt, minimum size=5pt] (33) at (3,3) {};
			\draw[thin,black!30,->] (00) -- (20);
			\draw[thin,black!30,->] (02) -- (22);
			\draw[thin,black!30,->] (11) -- (31);
			\draw[thin,black!30,->] (13) -- (33);
			\draw[thin,black!30,->] (00) -- (02);
			\draw[thin,black!30,->] (20) -- (22);
			\draw[thin,black!30,->] (11) -- (13);
			\draw[thin,black!30,->] (31) -- (33);
			\draw[thin,black!30,->] (00) -- (11);
			\draw[thin,black!30,->] (20) -- (31);
			\draw[thin,black!30,->] (02) -- (13);
			\draw[thin,black!30,->] (22) -- (33);
			\draw[gRed,->] (11) -- (31);
			\draw[gRed,->] (00) -- (11);
			\end{scope}	
			\end{tikzpicture}	
			
			\vspace{5mm}
			
			\begin{tikzpicture}[scale=0.8]
			\node[circle,draw=black, fill=black, inner sep=0pt, minimum size=5pt] (00) at (0,0) {};
			\node[circle,draw=black, fill=black, inner sep=0pt, minimum size=5pt] (20) at (2,0) {};
			\node[circle,draw=black, fill=black, inner sep=0pt, minimum size=5pt] (02) at (0,2) {};
			\node[circle,draw=black, fill=black, inner sep=0pt, minimum size=5pt] (22) at (2,2) {};
			\node[circle,draw=black, fill=black, inner sep=0pt, minimum size=5pt] (11) at (1,1) {};
			\node[circle,draw=black, fill=black, inner sep=0pt, minimum size=5pt] (31) at (3,1) {};
			\node[circle,draw=black, fill=black, inner sep=0pt, minimum size=5pt] (13) at (1,3) {};
			\node[circle,draw=black, fill=black, inner sep=0pt, minimum size=5pt] (33) at (3,3) {};
			\draw[thin,black!30,->] (00) -- (20);
			\draw[thin,black!30,->] (02) -- (22);
			\draw[thin,black!30,->] (11) -- (31);
			\draw[thin,black!30,->] (13) -- (33);
			\draw[thin,black!30,->] (00) -- (02);
			\draw[thin,black!30,->] (20) -- (22);
			\draw[thin,black!30,->] (11) -- (13);
			\draw[thin,black!30,->] (31) -- (33);
			\draw[thin,black!30,->] (00) -- (11);
			\draw[thin,black!30,->] (20) -- (31);
			\draw[thin,black!30,->] (02) -- (13);
			\draw[thin,black!30,->] (22) -- (33);
			\draw[gRed,->] (02) -- (22);
			
			\node[circle,draw=none, fill=none, inner sep=0pt, minimum size=5pt] at (5,1.5) {$\Longrightarrow$};	
			\node[circle,draw=none, fill=none, inner sep=0pt, minimum size=5pt] at (5,2) {$(\ddag_2)$};
			
			\begin{scope}[xshift = 200]
			\node[circle,draw=black, fill=black, inner sep=0pt, minimum size=5pt] (00) at (0,0) {};
			\node[circle,draw=black, fill=black, inner sep=0pt, minimum size=5pt] (20) at (2,0) {};
			\node[circle,draw=black, fill=black, inner sep=0pt, minimum size=5pt] (02) at (0,2) {};
			\node[circle,draw=black, fill=black, inner sep=0pt, minimum size=5pt] (22) at (2,2) {};
			\node[circle,draw=black, fill=black, inner sep=0pt, minimum size=5pt] (11) at (1,1) {};
			\node[circle,draw=black, fill=black, inner sep=0pt, minimum size=5pt] (31) at (3,1) {};
			\node[circle,draw=black, fill=black, inner sep=0pt, minimum size=5pt] (13) at (1,3) {};
			\node[circle,draw=black, fill=black, inner sep=0pt, minimum size=5pt] (33) at (3,3) {};
			\draw[thin,black!30,->] (00) -- (20);
			\draw[thin,black!30,->] (02) -- (22);
			\draw[thin,black!30,->] (11) -- (31);
			\draw[thin,black!30,->] (13) -- (33);
			\draw[thin,black!30,->] (00) -- (02);
			\draw[thin,black!30,->] (20) -- (22);
			\draw[thin,black!30,->] (11) -- (13);
			\draw[thin,black!30,->] (31) -- (33);
			\draw[thin,black!30,->] (00) -- (11);
			\draw[thin,black!30,->] (20) -- (31);
			\draw[thin,black!30,->] (02) -- (13);
			\draw[thin,black!30,->] (22) -- (33);	
			\draw[gRed,->] (02) -- (22);
			\draw[gRed,->] (00) -- (02);
			\end{scope}	
			\end{tikzpicture}	
			
			\vspace{5mm}
			
			\begin{tikzpicture}[scale=0.8]
			\node[circle,draw=black, fill=black, inner sep=0pt, minimum size=5pt] (00) at (0,0) {};
			\node[circle,draw=black, fill=black, inner sep=0pt, minimum size=5pt] (20) at (2,0) {};
			\node[circle,draw=black, fill=black, inner sep=0pt, minimum size=5pt] (02) at (0,2) {};
			\node[circle,draw=black, fill=black, inner sep=0pt, minimum size=5pt] (22) at (2,2) {};
			\node[circle,draw=black, fill=black, inner sep=0pt, minimum size=5pt] (11) at (1,1) {};
			\node[circle,draw=black, fill=black, inner sep=0pt, minimum size=5pt] (31) at (3,1) {};
			\node[circle,draw=black, fill=black, inner sep=0pt, minimum size=5pt] (13) at (1,3) {};
			\node[circle,draw=black, fill=black, inner sep=0pt, minimum size=5pt] (33) at (3,3) {};
			\draw[thin,black!30,->] (00) -- (20);
			\draw[thin,black!30,->] (02) -- (22);
			\draw[thin,black!30,->] (11) -- (31);
			\draw[thin,black!30,->] (13) -- (33);
			\draw[thin,black!30,->] (00) -- (02);
			\draw[thin,black!30,->] (20) -- (22);
			\draw[thin,black!30,->] (11) -- (13);
			\draw[thin,black!30,->] (31) -- (33);
			\draw[thin,black!30,->] (00) -- (11);
			\draw[thin,black!30,->] (20) -- (31);
			\draw[thin,black!30,->] (02) -- (13);
			\draw[thin,black!30,->] (22) -- (33);
			\draw[gRed,->] (13) -- (33);
			
			\node[circle,draw=none, fill=none, inner sep=0pt, minimum size=5pt] at (5,1.5) {$\Longrightarrow$};	
			\node[circle,draw=none, fill=none, inner sep=0pt, minimum size=5pt] at (5,2) {$(\ddag_3)$};
			
			\begin{scope}[xshift = 200]
			\node[circle,draw=black, fill=black, inner sep=0pt, minimum size=5pt] (00) at (0,0) {};
			\node[circle,draw=black, fill=black, inner sep=0pt, minimum size=5pt] (20) at (2,0) {};
			\node[circle,draw=black, fill=black, inner sep=0pt, minimum size=5pt] (02) at (0,2) {};
			\node[circle,draw=black, fill=black, inner sep=0pt, minimum size=5pt] (22) at (2,2) {};
			\node[circle,draw=black, fill=black, inner sep=0pt, minimum size=5pt] (11) at (1,1) {};
			\node[circle,draw=black, fill=black, inner sep=0pt, minimum size=5pt] (31) at (3,1) {};
			\node[circle,draw=black, fill=black, inner sep=0pt, minimum size=5pt] (13) at (1,3) {};
			\node[circle,draw=black, fill=black, inner sep=0pt, minimum size=5pt] (33) at (3,3) {};
			\draw[thin,black!30,->] (00) -- (20);
			\draw[thin,black!30,->] (02) -- (22);
			\draw[thin,black!30,->] (11) -- (31);
			\draw[thin,black!30,->] (13) -- (33);
			\draw[thin,black!30,->] (00) -- (02);
			\draw[thin,black!30,->] (20) -- (22);
			\draw[thin,black!30,->] (11) -- (13);
			\draw[thin,black!30,->] (31) -- (33);
			\draw[thin,black!30,->] (00) -- (11);
			\draw[thin,black!30,->] (20) -- (31);
			\draw[thin,black!30,->] (02) -- (13);
			\draw[thin,black!30,->] (22) -- (33);
			\draw[gRed,->] (13) -- (33);
			\draw[gRed,->] (00) -- (13);
			\end{scope}	
			\end{tikzpicture}
			\caption{The three restrictions needed for a transfer system on $\mathsf{AGL}_1(\mathbb{F}_7)$.}\label{fig:duberel}
		\end{figure}
		
		Note again that all other possible relations are implied by these ones due to the usual axioms for a transfer system. 
		
		Out of the 450 transfer systems on $[1] \times [1] \times [1]$ (c.f.,~\cite{bbpr}), one computes that 400 of these are transfer systems for $\mathsf{AGL}_1(\mathbb{F}_7)$.
	\end{example}
	
	\newpage
	
	\section{Strategies for lossy groups}\label{sec:lossy}

	In the previous sections we have explored lossless groups, and shown that they provide a convenient computational framework for computing transfer systems, especially when restricted to nicer subclasses such as metacyclic Frobenius groups. Although we've shown in \autoref{sec:families} that several important classes of groups are lossless, more complicated groups that may arise in practice tend to be lossy. Although lossy groups prevent us from working directly with $\Sub(G)/G$, with some cleverness it may still be the case that we can obtain simpler representations of transfer systems for lossy groups. In this final section we speculatively discuss one possible strategy for dealing with lossy groups in the case of $G = \SL_2(\FF_p)$ where $p$ is prime.
	
	If $p=2,3,$ or $5$, then we have seen in \autoref{lem:goodsl} that $G$ is lossless, but for $p>5$ these groups are always lossy. On the other hand, if $p\equiv \pm 3\mod 8$ then these groups are very close to being lossless. When $p\equiv \pm 1\mod 8$, the lossy binary octahedral subgroups add to the lossyness, but even in this case $G$ only has a few deviations from losslessness. In the speculations that follow we focus on the simple case of $p\equiv \pm 3\mod 8$, but with some work it should be feasible to extend our constructions to work in general. In \autoref{fig:sl2f13} we provide a schematic for $\Sub(G)/G$ where $G=\mathsf{SL}_2(\FF_{13})$. The general goal is to represent $G$-transfer systems as a pair of abstract transfer systems on two small posets related to $\Sub(G)/G$, subject to a compatibility condition. For $p\equiv \pm1\mod 8$ one would likely need to use more than two posets, but the same general technique should still apply.

	Let $H_- \cong \Dic_{(p-1)/2}$ be the normalizer of the diagonal matrices, and $H_+\cong\Dic_{(p+1)/2}$ the normalizer of some maximal non-split torus. (Here $\Dic_n$ is the \emph{dicyclic group} of order $4n$ as in \aref{defn:groups}). Let $Z = H_1\cap H_2\cong C_2$ be the center of $G$. 
	
	Recall that for a group $G$, the \emph{Frattini subgroup} of $G$ is defined as intersection of all maximal subgroups of $G$~\cite[\S 5.1]{gorenstein}. The maximal subgroups of $\SL_2(\FF_p)$ can be deduced from \cite{king} since the Frattini subgroup of $\SL_2(\FF_p)$ is equal to its center. When $p\equiv \pm 3\mod 8$, these maximal subgroups are 
	\begin{enumerate}
		\item[(1)]  normalizers of torii, which are dicyclic as described above,
		\item[(2)]  Borel subgroups, which are isomorphic to $\FF_p\rtimes \FF_p^\times$ with $x\in\FF_p^\times$ acting on $\FF_p$ as multiplication by $x^2$, and
		\item[(3)]  binary tetrahedral or binary icosahedral subgroups.
	\end{enumerate}
	
	The Borel subgroups are universally lossless, and the binary tetrahedral/icosahedral subgroups are isomorphic to $\SL_2(\FF_3)$ and $\SL_2(\FF_5)$ which again are universally lossless. Furthermore, if we let $\epsilon = \pm 1$ accordingly as $p+\epsilon\equiv 4\mod 8$, then the normalizer of a torus of order $p-\epsilon$ is a dicyclic group of order $2(p-\epsilon)\equiv 4\mod 8$, and hence is also universally lossless. Thus the only non-universally lossless maximal subgroups are the normalizers of torii of order $p+\epsilon$, which are all conjugate to $H_{\epsilon}$.
	
	By \cite{cyclicconj}, any two cyclic subgroups of $G$ with the same order are conjugate. The subgroup $H_{\epsilon}$ contains three conjugacy classes of subgroups isomorphic to $C_4$, and hence when we embed into $\Sub(G)/G$ these three copies of $C_4$ must be mapped to the same conjugacy class. But if $C_4,{}^gC_4\leqslant K\leqslant H_{\epsilon}$ is not contained in any other maximal subgroup (e.g., $K = H_\epsilon$ itself), then $N_G(K)\leqslant H_{\epsilon}$, so $C_4,{}^gC_4$ cannot be conjugate in $N_G(K)$. This is what causes $G$ to be lossy. On the other hand, when $p\equiv \pm3\mod 8$ this appears to be the only obstacle preventing losslessness.
	
	Let $D_G$ be the poset $\Sub(H_\epsilon)/H_\epsilon$ with an additional top vertex $[G]$. (For $G=\SL_2(\FF_{13})$, this is depicted in \autoref{fig:sl2f13dg}.) Let $U_G\subseteq \Sub(G)/G$ be the subposet on objects $[H]$ such that either $H=G$ or $H$ is contained in some universally lossless subgroup (depicted in \autoref{fig:sl2f13}). We have natural poset maps $\psi^D\colon D_G\to \Sub(G)/G$ and $\psi^U\colon U_G\to \Sub(G)/G$. Let $I_G = (\psi^D)^{-1}(\im \psi^U)\subseteq D_G$ (depicted in \autoref{fig:sl2f13dg} for $G=\SL_2(\FF_{13})$). We let $\phi^D\colon I_G\to D_G$ be the canonical embedding, and we let $\phi^U\colon I_G\to U_G$ be the restriction of $\psi^D$. For any abstract transfer system $\RR$ on $I_G$, let $\phi^D_*(\RR) = \im(\phi^D|_{\RR})$ and similarly for $\phi^U_*(\RR)$.
	
	\begin{defn}
		A \emph{split transfer system} is a triple of catgorical transfer systems $\RR_D$, $\RR_I$, $\RR_U$ on $D_G$, $I_G$, $U_G$, respectively, such that
		\begin{enumerate}
			\item[(1)] if for some $[C_4]\in D_G$ we have $[C_4]\, \RR \,[G]$, then in fact $[C_4]\, \RR \, [G]$ for all conjugacy classes of $[C_4]\in D_G$, and
			\item[(2)] $\phi^D_*(\RR_I) = \RR_D\cap\im\phi^D$ and $\phi^U_*(\RR_I) = \RR_U\cap\im\phi^U$.
		\end{enumerate}
	\end{defn}
	
	For every split transfer system $(\RR_D,\RR_I,\RR_U)$, we can define a reflexive relation $\RR$ on $\Sub(G)$ as follows. Let $K\leqslant H$. If $H = G$ and $K\leqslant L$ for some universally lossless maximal subgroup $L$, then we set $K \, \RR \, H$ if and only if $[K] \, \RR_U \, [G]$. If $K\not\leqslant L$ for any such $L$, then some conjugate ${}^gK$ of $K$ is contained in $H_{\epsilon}$, and we set $K \, \RR \, H$ if and only if $[{}^gK] \, \RR_D \, [G]$.
	
	So we suppose $H<G$. If $H\leqslant L$ for some universally lossless maximal subgroup $L\leqslant G$, then we set $K \, \RR \, H$ if and only if $[K] \, \RR_U \, [H]$. Otherwise we can find some $g\in G$ such that ${}^gH\leqslant H_\epsilon$. Then we set $K \, \RR \, H$ if and only if $[{}^gK] \, \RR_D \, [{}^gH]$.
	
	Conversely, if $\RR$ is a transfer system on $G$ then we can define $(\RR_D,\RR_I,\RR_U)$ so that
	\begin{enumerate}
		\item[(1)] $\RR_D = \pi_*(\RR|_{\Sub(H_\epsilon)})$, where $\pi\colon \Sub(H_\epsilon)\to\Sub(H_\epsilon)/H_\epsilon\subseteq D_G$,
		\item[(2)] $\RR_I = \RR_D|_{I_G}$, and
		\item[(3)]  $\RR_U = \pi'_*(\RR)|_{U_G}$ where $\pi'\colon \Sub(G)\to\Sub(G)/G$.
	\end{enumerate} 
	
	The discussion of this speculative section culminates in the following conjecture. If this conjecture were true, it would provide a constructive method for exploring $N_\infty$ operads for an interesting class of groups which are not lossless. In particular, one should not despair if their favorite group of equivariance fails to be lossless, one only needs to figure out a way to exploit the structure of the group itself.
	
	\begin{conjecture}
		Fix an arbitrary transfer system $\RR$ on $G = \SL_2(\FF_p)$ where $p > 5$, $p \neq 11$, and $p \equiv \pm 3 \mod 8$.  Then the triple $(\RR_D,\RR_I,\RR_U)$ is a split transfer system, and $\RR$ is lifted from $(\RR_D,\RR_I,\RR_U)$ using the procedure above.
	\end{conjecture}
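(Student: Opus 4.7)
The plan is to attack the conjecture in two stages: first, verify that the triple $(\RR_D,\RR_I,\RR_U)$ extracted from a given $G$-transfer system $\RR$ satisfies the defining axioms of a split transfer system; second, verify that the reconstruction procedure recovers $\RR$ on the nose. Throughout, the key leverage is that although $G$ itself is lossy, the only failure of losslessness under the hypothesis $p > 5$, $p\neq 11$, $p\equiv \pm 3\mod 8$ is the fusion of the three $H_\epsilon$-conjugacy classes of $C_4$ into a single $G$-class; every other maximal subgroup is universally lossless, and $H_\epsilon\cong \Dic_{(p+\epsilon)/2}$ is itself lossless by Corollary \autoref{2}.

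First I would show each component is a categorical transfer system on its poset. Since $H_\epsilon$ is lossless, Lemma \autoref{lem:pushforward} immediately gives that $\pi_*(\RR|_{\Sub(H_\epsilon)})$ is a categorical transfer system on $\Sub(H_\epsilon)/H_\epsilon$; the extension to $D_G$ by the adjoined top $[G]$ is determined by declaring $[K]\,\RR_D\,[G]$ whenever $K\,\RR\,G$ for $K\leqslant H_\epsilon$, and the transfer-system axioms with this top vertex reduce to conjugation-closure and restriction-closure of $\RR$ in $G$ together with losslessness of $H_\epsilon$. For $\RR_U$, even though $\pi'_*$ is not globally well-behaved, a local version of the proof of \autoref{lem:pushforward} applies: every $[H]\in U_G$ has $H$ contained in a universally lossless maximal subgroup $L$, so fusion of subgroups of $H$ is controlled by $N_L(H)\leqslant N_G(H)$, which suffices to push forward $\RR$-relations among objects of $U_G$ unambiguously. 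The component $\RR_I$ inherits from $\RR_D$ by definition.

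Next, axiom (1) of a split transfer system is the observation that motivated the whole construction: by~\cite{cyclicconj} all order-$4$ cyclic subgroups of $G$ are $G$-conjugate, so if $K\,\RR\,G$ for one $C_4$ then by conjugation-closure $K'\,\RR\,G$ for every $C_4$, and hence every $H_\epsilon$-conjugacy class of $C_4$ is sent to $[G]$ in $\RR_D$. Axiom (2), the compatibilities $\phi^D_*(\RR_I) = \RR_D\cap \im\phi^D$ and $\phi^U_*(\RR_I) = \RR_U\cap\im\phi^U$, then unpacks to the assertion that a single relation in $\RR$ witnessed by subgroups living simultaneously in $H_\epsilon$ and in a universally lossless maximal is recorded consistently on both sides; this is immediate from the fact that $\RR_I$ was defined as the restriction of $\RR_D$ to $I_G$ and from the naturality of pushforward.

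The main obstacle --- and the step I expect to consume the bulk of the real work --- is the reconstruction direction. Given a split transfer system one defines $K\,\RR\,H$ by choosing $g\in G$ carrying $H$ into either $H_\epsilon$ or a universally lossless maximal subgroup, and one must prove that (a) the choice of $g$ does not matter, (b) relations coming from the dicyclic route agree with those coming from the universally lossless route when $H$ is covered by both (which is the content of condition (2) of a split transfer system and precisely what $I_G$ is designed to encode), and (c) the resulting relation actually satisfies transitivity, conjugation-closure, and restriction-closure on $\Sub(G)$. Well-definedness in (a) reduces to the observation that two choices of $g$ differ by an element of $N_G(H_\epsilon)$ (respectively $N_G(L)$), and losslessness of $H_\epsilon$ (respectively universal losslessness of $L$) makes that ambiguity invisible to $\RR_D$ (respectively $\RR_U$). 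The restriction-closure check in (c) is the subtle one: one must verify by case analysis on where the restriction target $L'$ lives (inside $H_\epsilon$, inside a universally lossless maximal, or straddling both via $I_G$) that the relations declared by the split transfer system really are closed under the intersection axiom of \autoref{defn:gtran}. Finally, pinning down why the hypothesis $p\neq 11$ is required will presumably demand a direct inspection of the subgroup lattice of $\SL_2(\FF_{11})$ to confirm that no further exceptional fusion besides the $C_4$ pathology occurs; handling the $p\equiv \pm 1\mod 8$ case mentioned in the text would enlarge the number of posets in the splitting but not change the overall shape of this argument.
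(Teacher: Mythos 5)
The statement you are trying to prove is stated in the paper as a \emph{conjecture}: Section \ref{sec:lossy} is explicitly speculative, and the authors offer no proof, only the construction of the triple $(\RR_D,\RR_I,\RR_U)$ and the lifting procedure. So there is no proof in the paper to compare against, and your task was effectively to settle an open question. Your outline follows the strategy the authors themselves sketch (exploit losslessness of $H_\epsilon$ via \autoref{2} and \autoref{lem:pushforward}, universal losslessness of the other maximal subgroups, and conjugacy of all $C_4$'s via \cite{cyclicconj}), and it correctly identifies where the difficulty lies. But as written it is a plan, not a proof.

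The concrete gaps are these. First, your ``key leverage'' --- that the $C_4$ fusion inside $H_\epsilon$ is the \emph{only} failure of losslessness for $p\equiv\pm3\bmod 8$ --- is asserted in the paper only as something that ``appears to be'' true; you use it as a fact without establishing it, and the entire well-definedness argument in your step (a) depends on it. Second, the reconstruction direction, which you yourself flag as ``the bulk of the real work,'' is left entirely undone: you name the three verifications (independence of the choice of $g$, agreement of the dicyclic and universally lossless routes on $I_G$, and closure of the resulting relation under transitivity, conjugation, and restriction on all of $\Sub(G)$) but carry out none of them. The restriction-closure check in particular requires a case analysis over where $K$, $H$, and the intersecting subgroup $L$ sit relative to the two families of maximal subgroups, and it is exactly here that a hidden obstruction could kill the conjecture; asserting that condition (2) of a split transfer system ``is precisely what $I_G$ is designed to encode'' restates the design intent rather than proving it suffices. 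Third, you correctly note that the exclusion $p\neq 11$ must come from some exceptional behaviour, but you do not identify it, which means you cannot yet confirm that your argument is using all of the hypotheses. Until these pieces are supplied, the statement remains a conjecture.
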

	
\begin{figure}[h]
\centering
	\begin{tikzpicture}[xscale=0.6, yscale = 0.55]
		\node[draw=none, gRed] (12) at (-10.25, -6) {$[e]$};
		\node [draw=none, gRed] (13) at (-6.5, -3) {$[C_3]$};
		\node [draw=none, gRed] (14) at (-13.25, -3) {$[C_2]$};
		\node [draw=none, gRed] (15) at (-6.5, 0) {$[C_6]$};
		\node [draw=none, gRed] (16) at (-13.25, 0) {$[C_4]$};
		\node [draw=none] (19) at (-8.75, 3) {$[\Dic_3]$};
		\node [draw=none] (20) at (-11, 3) {$[\Dic_3]$};
		\node [draw=none, gRed] (21) at (-6.5, 3) {$[C_{12}]$};
		\node [draw=none, gRed] (22) at (-13.25, 3) {$[Q_8]$};
		\node [draw=none] (23) at (-9.75, 6) {$[\Dic_6]$};
		\node [draw=none, gRed] (24) at (-15.75, 6) {$[\Dic_7]$};
		\node [draw=none, gRed] (25) at (-18, 0) {$[C_{14}]$};
		\node [draw=none, gRed] (26) at (-18, -3) {$[C_7]$};
		\node [draw=none, gRed] (27) at (-0.75, -3) {$[C_{13}]$};
		\node [draw=none, gRed] (28) at (-2.75, 0) {$[C_{26}]$};
		\node [draw=none, gRed] (29) at (1.5, 0) {$[C_{13} \rtimes C_{3}]$};
		\node [draw=none, gRed] (30) at (1.5, 3) {$[C_{2} \times C_{13} \rtimes C_{3}]$};
		\node [draw=none, gRed] (31) at (-2.75, 3) {$[\Dic_{13}]$};
		\node [draw=none, gRed] (32) at (-0.5, 6) {$[C_{13} \rtimes C_{12}]$};
		\node [draw=none, gRed] (33) at (-4.25, 6) {$[\SL_2(\mathbb{F}_{3})]$};
		\node [draw=none, gRed] (34) at (-9.5, 9) {$[\SL_2(\mathbb{F}_{13})]$};
		\draw[thin,black!30,->] (12) to (13);
		\draw[thin,black!30,->] (12) to (14);
		\draw[thin,black!30,->] (14) to (16);
		\draw[thin,black!30,->] (13) to (15);
		\draw[thin,black!30,->] (14) to (15);
		\draw[thin,black!30,->] (15) to (19);
		\draw[thin,black!30,->] (15) to (20);
		\draw[thin,black!30,->] (15) to (21);
		\draw[thin,black!30,->] (16) to (22);
		\draw[thin,black!30,->] (16) to (19);
		\draw[thin,black!30,->] (19) to (23);
		\draw[thin,black!30,->] (20) to (23);
		\draw[thin,black!30,->] (21) to (23);
		\draw[thin,black!30,->] (22) to (23);
		\draw[thin,black!30,->] (16) to (20);
		\draw[thin,black!30,->] (16) to (21);
		\draw[thin,black!30,->] (16) to (24);
		\draw[thin,black!30,->] (25) to (24);
		\draw[thin,black!30,<-] [in=90, out=-90] (25) to (26);
		\draw[thin,black!30,->] (12) to (26);
		\draw[thin,black!30,->] (14) to (25);
		\draw[thin,black!30,->] (12) to (27);
		\draw[thin,black!30,->] (27) to (28);
		\draw[thin,black!30,->] (14) to (28);
		\draw[thin,black!30,<-] (31) to (28);
		\draw[thin,black!30,<-] (31) to (16);
		\draw[thin,black!30,<-] (30) to (28);
		\draw[thin,black!30,<-] (30) to (15);
		\draw[thin,black!30,<-] (30) to (29);
		\draw[thin,black!30,->] (13) to (29);
		\draw[thin,black!30,->] (29) to (27);
		\draw[thin,black!30,<-] (32) to (30);
		\draw[thin,black!30,<-] (32) to (31);
		\draw[thin,black!30,->] (21) to (32);
		\draw[thin,black!30,->] (22) to (33);
		\draw[thin,black!30,->] (15) to (33);
		\draw[thin,black!30,->] (24) to (34);
		\draw[thin,black!30,<-] (34) to (23);
		\draw[thin,black!30,<-] (34) to (33);
		\draw[thin,black!30,<-] (34) to (32);
\end{tikzpicture}
\caption{The poset $\Sub(G)/G$ for $G = \SL_2(\mathbb{F}_{13})$. The non-split torus appears as $\Dic_7$, the split torus as $\Dic_6$, $C_{13} \rtimes C_{12}$ is the Borel subgroup and $\SL_2(\mathbb{F}_3)$ is the binary tetrahedral subgroup.  The objects of the sub-poset $U_G$ are highlighted in red.}\label{fig:sl2f13}
\end{figure}
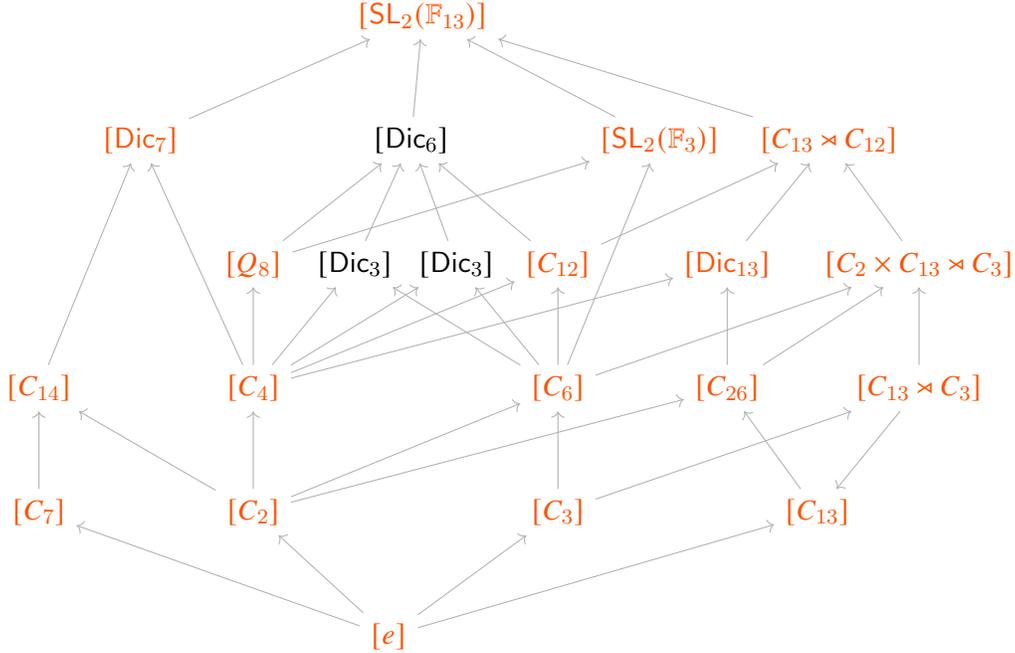

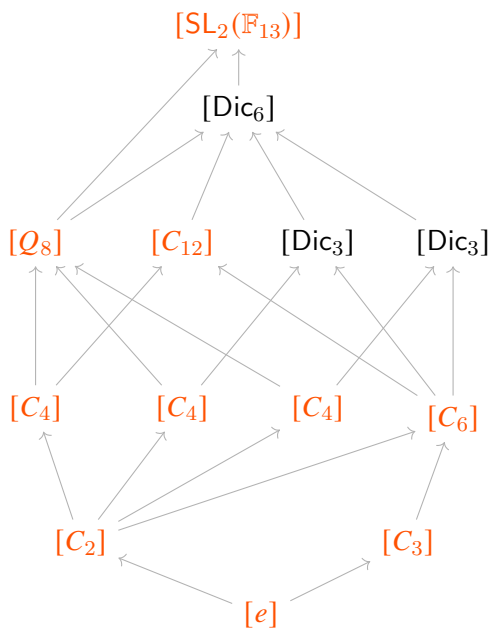
\begin{figure}[h]
\centering
	\begin{tikzpicture}[xscale=0.6, yscale = 0.55]
		\node[draw=none, gRed] (0) at (11.75, -5) {$[e]$};
		\node [draw=none,gRed] (1) at (15, -3.25) {$[C_3]$};
		\node [draw=none, gRed] (2) at (7.75, -3.25) {$[C_2]$};
		\node [draw=none,gRed] (3) at (16, -0.25) {$[C_6]$};
		\node [draw=none, gRed] (4) at (13, 0) {$[C_4]$};
		\node [draw=none, gRed] (5) at (10, 0) {$[C_4]$};
		\node [draw=none, gRed] (6) at (6.75, 0) {$[C_4]$};
		\node [draw=none] (7) at (16, 4) {$[\Dic_3]$};
		\node [draw=none] (8) at (13, 4) {$[\Dic_3]$};
		\node [draw=none, gRed] (9) at (10, 4) {$[C_{12}]$};
		\node [draw=none, gRed] (10) at (6.75, 4) {$[Q_8]$};
		\node [draw=none] (11) at (11.25, 7.25) {$[\Dic_6]$};
		\node [draw=none, gRed] (34) at (11.25, 9.25) {$[\SL_2(\mathbb{F}_{13})]$};
		\draw[thin,black!30,->]  (0) to (1);
		\draw[thin,black!30,<-]  (2) to (0);
		\draw[thin,black!30,->]  (2) to (6);
		\draw[thin,black!30,->]  (2) to (5);
		\draw[thin,black!30,->]  (2) to (4);
		\draw[thin,black!30,->]  (1) to (3);
		\draw[thin,black!30,->]  (2) to (3);
		\draw[thin,black!30,->]  (3) to (7);
		\draw[thin,black!30,->]  (3) to (8);
		\draw[thin,black!30,->]  (3) to (9);
		\draw[thin,black!30,->]  (4) to (10);
		\draw[thin,black!30,->]  (5) to (10);
		\draw[thin,black!30,->]  (6) to (10);
		\draw[thin,black!30,->]  (4) to (7);
		\draw[thin,black!30,->]  (5) to (8);
		\draw[thin,black!30,->]  (6) to (9);
		\draw[thin,black!30,->]  (7) to (11);
		\draw[thin,black!30,->]  (8) to (11);
		\draw[thin,black!30,<-]  (11) to (9);
		\draw[thin,black!30,<-]  (11) to (10);
		\draw[thin,black!30,->]  (11) to (34);
		\draw[thin,black!30,->]  (10) to (34);
\end{tikzpicture}
\caption{The poset $D_G$ for $G = \SL_2(\mathbb{F}_{13})$. The objects of the sub-poset $I_G$ are highlighted in red.}\label{fig:sl2f13dg}
\end{figure}

	\bibliographystyle{alpha}
	\bibliography{mfgroups.bib}
	
\end{document}